\newcommand{\CC}{\mathbb{C}}
\newcommand{\RR}{\mathbb{R}}
\newcommand{\ZZ}{\mathbb{Z}}
\newcommand{\DD}{\mathbb{D}}
\newcommand{\p}{\partial}
\newcommand{\F}{\mathcal{F}}
\renewcommand{\S}{\mathcal{S}}
\newtheorem{teo}{Theorem}[section]
\newtheorem{lema}[teo]{Lemma}
\newtheorem{prop}[teo]{Proposition}
\newtheorem{coro}[teo]{Corollary}
\newtheorem{definicion}[teo]{Definition}
\newtheorem{obs}[teo]{Remark}
\DeclareMathOperator{\Arccosh}{Arccosh}
\DeclareMathOperator{\Arctanh}{Arctanh}
\DeclareMathOperator{\inte}{int}
\DeclareMathOperator{\diam}{diam}
\DeclareMathOperator{\modulus}{mod}
\DeclareMathOperator{\card}{card}
\newcommand{\spb}[1]{\smallskip}
\newcommand{\mpb}[1]{\medskip}
\newcommand{\bpb}[1]{\bigskip}
\newcommand{\e}{\varepsilon}
\renewcommand{\d}{\delta}
\newcommand{\g}{\gamma}
\renewcommand{\l}{\lambda}
\renewcommand{\O}{\Omega}
\begin{document}

\title[THE TOPOLOGY OF BALLS AND GROMOV HYPERBOLICITY]
{THE TOPOLOGY OF BALLS AND GROMOV HYPERBOLICITY OF RIEMANN
SURFACES}

\author{
Jes\'us Gonzalo$^{(1)}$, Ana Portilla$^{(2)(3)}$, Jos\'e M. Rodr{\'\i}guez
$^{(3)}$ \and Eva Tour{\'\i}s$^{(2)(3)(4)}$}

\date{\today
\\
\textit{Key words and phrases:}
Topology of balls, uniformly separated sets, Riemann surfaces, Gromov hyperbolicity.
\\
2000 AMS Subject Classification:
30F20, 30F45, 53C20, 53C21.
\\
$(1)\,\,\,$ Partially supported by grants MTM2007-61982 from MEC
Spain and MTM2008-02686 from MICINN Spain.
\\
$(2)\,\,\,$ Supported in part by a grant MTM 2009-12740-C03-01 from
MICINN Spain.
\\
$(3)\,\,\,$ Supported in part by two grants MTM 2009-07800 and MTM
2008-02829-E from MICINN Spain.
\\
$(4)\,\,\,$ Supported in part by a grant CCG08-UC3M/ESP-4516 from
U.C.III$\,$M./C.A.M. Spain.}

\email{aportil2@slu.edu, jomaro@math.uc3m.es,
etouris@math.uc3m.es, jesus.gonzalo@uam.es}

\begin{abstract}
For each $k>0$ we find an explicit function $f_k$ such that the
topology of $S$ inside the ball $B_S(p,r)$ is `bounded' by $f_k(r)$
for every complete Riemannian surface (compact or noncompact) $S$
with $K \ge -k^2$, every $p \in S$ and every $r>0$. Using this
result, we obtain a characterization (simple to check in practical
cases) of the Gromov hyperbolicity of a Riemann surface $S^*$ (with
its own Poincar\'e metric) obtained by deleting from one original
surface $S$ any uniformly separated union of continua and isolated
points.
\end{abstract}

\maketitle{}

\section{Introduction.}

\mpb

This paper has two parts. In the first one we give a result
bounding the topological complexity of metric balls in terms of
the geometry. The bound we obtain is quite precise, and as an
application we show in the second part some useful criteria for
Gromov hyperbolicity of the Poincar\'e metric on a Riemann
surface. In particular (Theorem~\ref{t:finitegenus}), if $S$ is a
surface with finite genus and we remove from $S$ any `uniformly
separated' closed set $E$, then the Poincar\'e metric on the
deleted surface $S\setminus E$ is hyperbolic if and only if $S$
was hyperbolic with its own Poincar\'e metric.

Bounding the topology in terms of the geometry is a natural topic of
research; to mention a few examples see \cite{G2}, \cite{GP},
\cite{GPW}. More concretely, it is shown in \cite{G2} that the
fundamental group of a compact $n$-manifold $M$ with sectional
curvature verifying $K \ge -k^2$ can be generated with less than $C$
elements, where $C$ is a constant which just depends on $n$, $k$ and
the diameter of $M$. Theorem~\ref{t:balls} below is the noncompact
analogue for surfaces; it bounds the number of generators of the
fundamental group of a metric ball $B(p,r')$ by a constant times the
gap between the two sides of the classical comparison inequality:
\begin{equation}\label{ball-length}
\mbox{\rm length}\,\partial B(p,r)\;\leq\;\frac{2\pi}{k}\sinh (kr)\,
,
\end{equation} where $r$ is slightly larger than~$r'$. The result
is sharp: when the bound in Theorem~\ref{t:balls} is an
equality the metric ball is a topological disk and its curvature
is constant. An essential ingredient in the proof of this Theorem
is a second order differential inequality (\ref{fundamental})
relating the area and Euler characteristic of a metric ball, as
functions of the radius.

In Sections $5$ and $6$ we apply this result to the theory of Gromov
hyperbolicity. A geodesic metric space is called hyperbolic (in the
Gromov sense) if \emph{geodesic triangles are thin.\/} This means
that there exists an upper bound (the hyperbolicity constant) for
the distance from every point in a side of any geodesic triangle to
the union of the other two sides (see Definition \ref{def:Rips}).

Gromov hyperbolic spaces are a useful tool for understanding the
connections between graphs and Potential Theory on Riemannian
manifolds (see e.g. \cite{ARY}, \cite{CFPR}, \cite{FR2}, \cite{HS},
\cite{K1}, \cite{So}). Besides, the concept of Gromov hyperbolicity
grasps the essence of negatively curved spaces, and has been
successfully used in the theory of groups (see e.g. \cite{GH},
\cite{G3} and the references therein).

In recent years many researchers have been interested in studying
Gromov hyperbolicity of the metric spaces which appear in Geometric
Function Theory. In \cite{Be} and \cite{KN} it is shown that the
Klein-Hilbert metric is Gromov hyperbolic (under particular
conditions on the domain of definition); in \cite{Ha} it is proved
that the Gehring-Osgood metric is Gromov hyperbolic, and that the
Vuorinen metric is not Gromov hyperbolic except for a particular
case. In \cite{BB} significant progress is made about the
hyperbolicity of Euclidean bounded domains with their
quasihyperbolic metric (see also \cite{BHK} and the references
therein).

The study of Gromov hyperbolicity of a Riemann surface with its
Poincar\'e metric is non-trivial. An obvious reason is that
homological `holes' may be surrounded by geodesic triangles which
are not thin. For example in the `infinite grille', a
$\ZZ^2$-covering of the genus-2 surface, triangles engulfing many
holes are quite `fat'. An even stronger reason is the result, proved
in \cite{RT3}, that the usual classes: $O_G$, $O_{HP}$, $O_{HB}$,
$O_{HD}$, and surfaces with linear isoperimetric inequality, are
logically independent of the Gromov hyperbolic class. More
precisely, in each of these classes, as well as in its complement,
some surfaces are Gromov hyperbolic and some are not (even in the
case of plane domains). This has stimulated a good number of works
on the subject, e.g. \cite{APRT}, \cite{HLPRT}, \cite{HPRT1},
\cite{HPRT2}, \cite{PRT2}, \cite{PRT3}, \cite{PT}, \cite{RT3}.

A characterization of Gromov hyperbolicity for a surface $S^*$ with
cusps and/or funnels was obtained in \cite{PRT2} and \cite{PRT3}.
The idea there was to identify the cusps and funnels of $S^*$ with
pairwise disjoint compact sets $\{E_n\}$ removed from an original
surface $S$, so that the conformal structure of $S^*$ equals that of
$S\setminus \cup_n E_n$. Of course the Poincar\'e metric changes
when removing the sets $E_n$, but control of the resulting metric in
$S^*$ was achieved in terms of local information in $S$ near
each~$E_n$. Those two works use the idea of \emph{uniform
separation:\/} the $E_n$ are placed inside compact neighborhoods
$V_n\supset E_n$ having controlled topology each, and subject to
conditions such as $\p V_n$ being neither too long, nor too close to
$E_n$ or to the other $V_m$'s. The criterion obtained in \cite{PRT3}
additionally requires a `uniform hyperbolicity' condition, namely
that a single constant is valid for the hyperbolicity of all sets
$V_n\setminus E_n$ with the metric induced from $S^*$. This
condition can be hard to ensure in practice. In this paper we give
two criteria for hyperbolicity: Theorem~\ref{t:main}, based on
uniform separation but without the uniform hyperbolicity condition
in their hypotheses, and Theorem~\ref{t:infinite}, based on
`surrounding' the $E_n$'s by curves of controlled length (each $E_n$
is thus placed inside a ball, but with less constraints than in
uniform separation). In a nutshell, Theorem~\ref{t:main} states that
$S^*$ is hyperbolic if and only if $S$ is hyperbolic and a
reasonable metric condition on the handles of $S$ is held. There
follows an even simpler characterization when $S$ has either no
genus or finite genus, since in this case $S^*$ is hyperbolic if and
only if $S$ is hyperbolic (see Corollary \ref{c:main} and
Theorem~\ref{t:finitegenus}). This criterion has already received
important use in~\cite{HPRT2}.

Two ingredients have proved essential in the proofs of these
criteria. One is the above mentioned bound on the topology of balls.
Another ingredient consists on results about stability of
hyperbolicity; this means that we can set free some apparently
important quantities and still have uniform hyperbolicity. In this
direction, Theorem~\ref{t:finite} and Corollary~\ref{c:finite0}
guarantee uniform hyperbolicity even in situations where the
`punctures' $E_n$ approach one another. Likewise
Theorem~\ref{t:clasef} gives uniform hyperbolicity of all surfaces
with a fixed bound on the lengths of all of their funnel borders
\emph{except one} (see Definition~\ref{d:clasef}); this is a
remarkable improvement of a previous result \cite[Theorem
5.3]{PRT3}, where \emph{all} such borders had to be controlled.

It is also a remarkable fact that almost every constant appearing in
the results of this paper depends just on a small number of
parameters. This is a common place in the theory of hyperbolic
spaces (see e.g. \cite{GH}) and is also typical of surfaces with
curvature $-1$ (see e.g. the Collar Lemma in \cite{R} and \cite{S},
and Theorem~\ref{t:balls}). In fact, this simple dependence is a
crucial fact in the proof of Theorem~\ref{t:main}.

\mpb

\noindent {\bf Notations and conventions.} Every surface in this
paper is connected and orientable. In Section~2 we denote by $\p
B_r$ the \emph{extrinsic} boundary $\overline{B}_r\setminus B_r$ of
a ball $B_r$ as open set of an ambient surface $S$. In Sections 4,
5, and 6 we shall consider bordered $2$-dimensional manifolds, and
then the symbol $\p$, followed by the manifold's name, will indicate
the \emph{intrinsic} boundary of its bordered structure, e.g. $\p
(S^1\times (0,1])=S^1\times\{ 1\}$. If $(X,d_X)$ is a geodesic
metric space, we shall denote by $L_X$ the induced length, and given
$Y\subset X$ we shall write $d_X|_Y$, or simply $d_Y$, for the
geodesic distance induced on $Y$ by $d_X$. When there is no possible
confusion, we will not write the subindex $X$. Finally, we denote by
$c$, $k$, $c_j$, and $k_j$, positive constants which can assume
different values in different theorems.

\spb

\section{The topology of balls.}

\spb

In this Section we give upper bounds, in terms of the radius, for
the growth of the topological complexity of distance balls in a
surface endowed with a Riemann metric. Unlike the rest of the paper,
we allow the Gaussian curvature to be zero or positive somewhere.

Given a surface $S$, the topological complexity within $S$ of each
distance ball $B(p,r)$ will be measured using the integer $n(r)$
defined as follows:
\begin{equation}\label{n1n2}
n(r) := \;\hbox{\rm minimal number of generators for }\;\pi_1\big(\,
B(p,r)\, ,\, p\,\big)\; .
\end{equation}
Given $r_0$, we are going to bound $n(r')$ for some $r'>r_0$ not far
from $r_0$. This seems unavoidable because $n(r)$ is not always a
monotonic function of~$r$. Figure~\ref{figure:balls3} describes
metric balls $B_r$ such that $n(r)$ goes up and down as $r$ takes on
three values $r_1<r_2<r_3$. The starting ball $B_{r_1}$ (leftmost in
the figure) is diffeomorphic with a disk but its frontier in $S$ is
a curve with three points of self-tangency; it has $n(r_1)=0$. The
ball $B_{r_2}$ has $n(r_2)=3$ but one of its boundary components (a
`spurious hole') bounds a triangular disk in $S$. When $r=r_3$ the
triangular hole has disappeared and then~$n(r_3)=2$. In general, we
need to go from $B_{r_0}$ to some larger $B_{r'}$ with fewer
spurious holes.

\begin{figure}[h]
\includegraphics[scale=0.5]{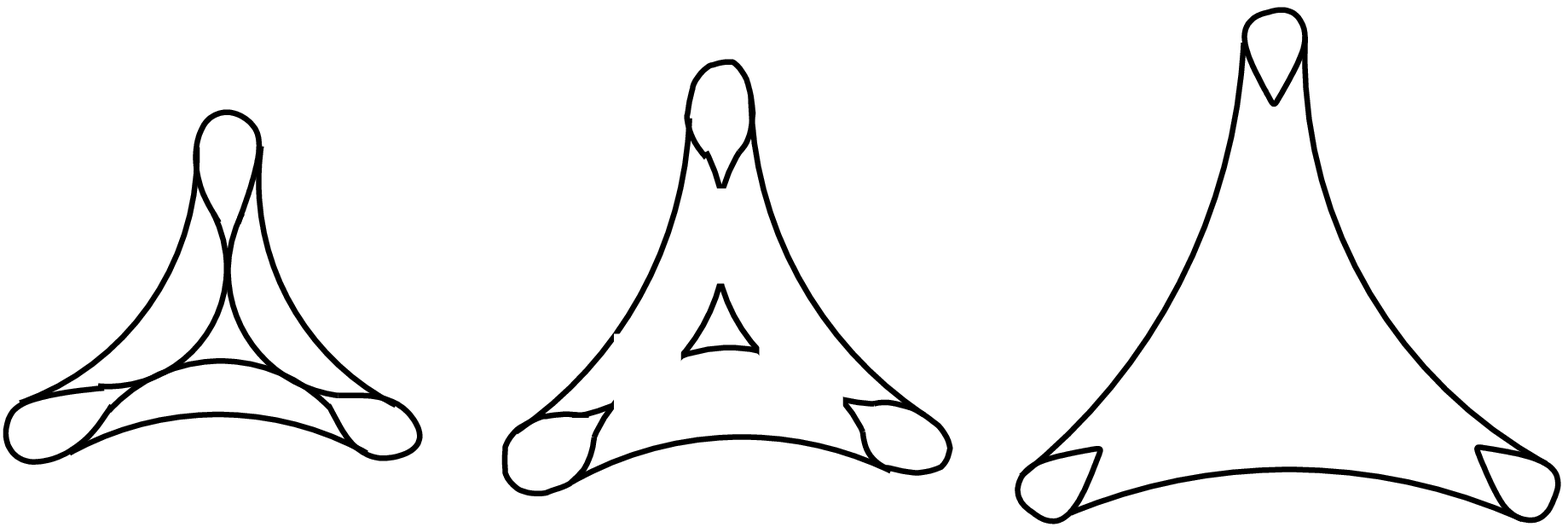}
 \caption{}
 \label{figure:balls3}
\end{figure}

\begin{obs}\label{con-borde}
{\rm In this section we always work within some closed ball
$\overline{B}(p,R)$ satisfying the following conditions:
\begin{list}{}{}
\item[--] The open ball $B(p,R)$ is not all of $S$. Thus for each $r<R$
the boundary $\partial B(p,r)$ has positive length.
\item[--] Every geodesic issuing from $p\,$ continues up to length
$R$. In particular, $\overline{B}(p,R)$ is compact.
\end{list}
}
\end{obs}

\begin{definicion}\label{ell}
For each $r\geq 0$ let $\ell (r)$ denote the length of the boundary
$\partial B(p,r)$.
\end{definicion}

\begin{teo}
\label{t:balls} Let $k,c$ be positive constants and assume $r_0>0$
is such that the ball $\overline{B}\big(\, p\, ,\,
r_0+\frac{c}{k}\,\big)$ is in the hypotheses of
Remark~\ref{con-borde}.

If the metric is real analytic and satisfies $K\ge -k^2$, or if it
is smooth and satisfies $0\ge K\ge -k^2$, then there is a radius
$r'$, strictly between $r_0$ and $r_0+\frac{c}{k}$, such that
\begin{equation}\label{top-estimate}
n(r')\;\leq\; \frac{1}{\sinh c}\left( \sinh
(kr_0+c)-\frac{k\,\ell\big(\, r_0+\frac{c}{k}\,\big)}{2\pi}\right)
\; .
\end{equation}
This inequality is also valid if $n(r')$ is defined using the
fundamental group of the closed metric $r'$-ball, and it is strict
unless the ball      $B\big(\, p\, ,\, r_0+\frac{c}{k}\,\big)$ is an
injective image of the exponential map (hence a disk) and has
$K\equiv -k^2$.
\end{teo}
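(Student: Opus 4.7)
Set $R := r_0 + c/k$ and introduce the hyperbolic comparison quantities $L_0(r) := (2\pi/k)\sinh(kr)$, $A_0(r) := (2\pi/k^{2})(\cosh(kr)-1)$, and the length deficit $f(r) := L_0(r) - \ell(r)$. The plan is to derive a second-order differential inequality for $f$ that absorbs the topological quantity $n(r)$, integrate it against the Sturm weight $\phi(r) := \sinh(k(R-r))$ via a Wronskian identity, and finally extract the radius $r'$ by a mean-value argument.

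Between consecutive critical values of the distance function $d_p$ inside $[r_0,R]$, the sphere $\partial B_r$ is a smooth $1$-manifold, $\chi(B_r)=1-n(r)$ is locally constant, and Gauss--Bonnet combined with the first-variation formula $\ell'(r)=\int_{\partial B_r}k_g\,ds$ yields
\begin{equation*}
\ell'(r)=2\pi(1-n(r))-\int_{B_r}K\,dA \le 2\pi(1-n(r))+k^{2}A(r),\qquad \ell''(r)=-\int_{\partial B_r}K\,ds\le k^{2}\ell(r).
\end{equation*}
Combined with the classical comparison $\ell\le L_0$ and its integrated form $A\le A_0$, I obtain on every regular interval the twin inequalities
\begin{equation*}
f'(r)\ge 2\pi n(r),\qquad f''(r)\ge k^{2}f(r).
\end{equation*}
The real-analytic hypothesis (respectively the smooth-with-$K\le 0$ hypothesis) enters here to guarantee that the critical values of $d_p$ form a finite set $s_1<\dots<s_m$ consisting of standard handle-type topology changes, that $\ell$ remains continuous, and that the only jumps of $\ell'$ are $\Delta\ell'(s_i)=-2\pi\,\Delta n(s_i)$; consequently $f$ is continuous on $[r_0,R]$ with $\Delta f'(s_i)=2\pi\,\Delta n(s_i)$.

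The Wronskian $W:=f'\phi-f\phi'$ then satisfies $W'=\phi(f''-k^{2}f)\ge 0$ on every regular interval and jumps by exactly $\Delta W(s_i)=2\pi\,\phi(s_i)\,\Delta n(s_i)$ at each critical value. Telescoping these increments from $r_0^{+}$ to $R^{-}$, using $\phi(R)=0$, $\phi'(R)=-k$, $\phi(r_0)=\sinh c$, $f(r_0)\ge 0$ and $f'(r_0^{+})\ge 2\pi n(r_0^{+})$, and collapsing the sum of jumps by the Abel-summation identity
\begin{equation*}
n(r_0^{+})\sinh c + \sum_{i=1}^{m}\Delta n(s_i)\sinh(k(R-s_i)) = k\int_{r_0}^{R}n(r)\cosh(k(R-r))\,dr,
\end{equation*}
one arrives at $f(R)\ge 2\pi\int_{r_0}^{R}n(r)\cosh(k(R-r))\,dr$. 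Since $\int_{r_0}^{R}\cosh(k(R-r))\,dr=(\sinh c)/k$, a mean-value argument on this weighted integral produces a regular $r'\in(r_0,R)$ with $n(r')$ at most the weighted average, which gives
\begin{equation*}
n(r')\le \frac{k\,f(R)}{2\pi\sinh c}=\frac{1}{\sinh c}\Big(\sinh(kR)-\frac{k\,\ell(R)}{2\pi}\Big),
\end{equation*}
precisely \eqref{top-estimate}. The closed-ball version follows because $\pi_{1}(B(p,r'))\cong\pi_{1}(\overline B(p,r'))$ at any regular value of $d_p$, by gradient-flow deformation retraction. Equality throughout forces $f(r_0)=0$, no critical values in $(r_0,R)$, $f'\equiv 2\pi n$ and $f''\equiv k^{2}f$ on $[r_0,R]$, hence $\ell\equiv L_0$, $A\equiv A_0$ and $K\equiv -k^{2}$, which makes $\overline B(p,R)$ an injective exponential image of a disk of constant curvature $-k^{2}$.

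The technical heart of the argument is the Morse-theoretic analysis of the critical values of $d_p$: verifying the continuity of $\ell$ and the jump identity $\Delta f'=2\pi\,\Delta n$ at every topology change of the sublevel sets of $d_p$ is where the real-analytic or sign-of-$K$ hypothesis is indispensable, and it is exactly this analysis that legitimises the second-order differential inequality relating the area and the Euler characteristic of $B_r$ alluded to in the introduction.
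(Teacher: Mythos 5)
Your strategy---Gauss--Bonnet plus a Sturm/Wronskian comparison against $\phi(r)=\sinh(k(R-r))$, then a mean-value argument to extract $r'$---is a reasonable outline, and the target estimate $f(R)\ge 2\pi\int_{r_0}^{R}n(r)\cosh(k(R-r))\,dr$ is in fact true. However, the chain of assertions you use to reach it has several genuine breaks.

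First, between critical values the sphere $\partial B_r$ is \emph{not} a smooth $1$-manifold: once it has touched the cut locus it carries corner points forever after, on every connected component of positive length (Remark~\ref{corners}). The correct first-variation formula is therefore
$$
\ell'(r)\;=\;\int_{\partial B_r}k_g\,ds\;-\;\sum_i 2\tan\beta_i\,,
$$
where $\alpha_i=\pi+2\beta_i$ are the interior angles at the corners. The inequality $\ell'(r)+\int_{B_r}K\,dA\le 2\pi\chi(r)$ then comes from comparing the $-2\tan\beta_i$ in $\ell'$ with the $-2\beta_i$ in the Gauss--Bonnet angle defect via $\beta<\tan\beta$; it is a genuine inequality, strict exactly when a corner is present. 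Your final estimate $f'\ge 2\pi n$ happens to come out right, but the equality you wrote for $\ell'$ hides the very mechanism that makes the result sharp and supplies the rigidity statement.

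Second, the companion inequality $f''\ge k^2 f$, i.e.\ $\ell''\le k^2\ell$, is unjustified. The identity $\ell''=-\int_{\partial B_r}K\,ds$ is the Jacobi/Riccati relation for a smooth distance circle; with corners present, the $\theta$-ranges defining the smooth arcs of $\partial B_r$ vary with $r$ and the corner terms $-2\tan\beta_i(r)$ must themselves be differentiated, producing endpoint and $\dot\beta_i$ contributions with no controlled sign. Nothing in the Gauss--Bonnet analysis delivers a bound on $\ell''$, and the paper never uses one. Without $f''\ge k^2f$ the monotonicity $W'\ge 0$ of your Wronskian $W=f'\phi-f\phi'$ is unsupported.

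Third, the jump rule $\Delta f'(s_i)=2\pi\,\Delta n(s_i)$ fails. When a small boundary component shrinks away (the situation of Figure~\ref{figure:ojales}), $\ell'(r)\to -\infty$ as $r\to s_i^-$, so $f'$ has a one-sided singularity, not a finite jump. Then $W(s_i^-)=+\infty$ while $W(s_i^+)$ is finite, and the telescoping of $W$ from $r_0^+$ to $R^-$ becomes an $\infty-\infty$ expression that does not close.

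The paper avoids all three difficulties by shifting one derivative down: it works with the area $a(r)$, which is $C^1$ because $a'(r)=\ell(r)$ is continuous (Lemma~\ref{finitos}) even at the special radii where $\ell'$ is singular. The single differential inequality $a''(r)-k^2a(r)\le 2\pi\chi(r)$ is all that is needed, and the comparison is carried out through the first-order substitution $a(r)=e^{kr}c(r)$ in Lemma~\ref{comparison}, which requires only integrability of $a''$, not continuity, and avoids any second derivative of $\ell$ or any jump identity at the special radii. If you want to keep a Wronskian flavour, the stable choice is $W=a'\psi-a\psi'$ with $\psi(r)=\cosh(k(R-r))$: then $W$ is continuous, $W'=\psi\,(a''-k^2a)\le 2\pi\psi\,\chi$ on regular intervals, and evaluating at the endpoints together with $\ell(r_0)\le (2\pi/k)\sinh(kr_0)$ and $a(r_0)\le (2\pi/k^2)(\cosh(kr_0)-1)$ recovers exactly the inequality $f(R)\ge 2\pi\int_{r_0}^{R}n(r)\cosh(k(R-r))\,dr$ that your mean-value step then converts into~\eqref{top-estimate}.
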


In particular $n(r')\leq\displaystyle\frac{\sinh (kr_0+c)}{\sinh
c}<\frac{1}{1-e^{-2c}}e^{kr_0}$. Any general bound for the
fundamental group must grow exponentially with the radius: consider
copies of a fixed $Y$-piece with $K=-k^2$; if we paste them together
following the combinatorial design of a binary tree, we obtain an
example where $n(r)$ is asymptotically equal to $c_0e^{c_1r}$ for
some constants $c_0,c_1>0$.

Using Theorem~\ref{t:balls} one can improve the constant in
\cite[Theorem 3.1]{PRT2}, a result which says that Balls of small
radius (depending on the Gauss curvature bound) are simply or doubly
connected.

\begin{obs}\label{open-vs-closed}
{\rm For analytic metrics, and for those satisfying $K\leq 0$, we
are going to see that, as $r$ increases from $0$ to $R$, the
topology of the distance ball changes only at values of $r$ which
make up a discrete set in $[0,R)$. For all other values of $r$ the
inclusion $B(p,r)\hookrightarrow\overline{B}(p,r)$ is a homotopy
equivalence. The function $\,d(p\, ,\cdot )$ thus behaves like a
Morse function.}
\end{obs}

The present Section is organized as follows. We first examine in
depth the pertinent properties of the cut and conjugate loci. Then
we establish the regularity of the function $\ell (r)$ from
Definition~\ref{ell}, and give a formula for its derivative. After
these preliminaries we prove Theorem \ref{t:balls}, for which we
shall use a differential inequality (\ref{fundamental}) which
relates area and Euler characteristic of metric balls.

\mpb

Let $\mbox{\rm Exp}_p:T_pS\to S$ be the exponential map. The
boundary $\partial B(p,r)$ is some closed subset of the following
image
\[ \mbox{\rm Exp}_p\big(\, \{\; {\bf v}\in T_pS\; ;\; \|{\bf v}\|
=r\; \}\big)\, \] which is usually a complicated curve on $S$ with
many self-intersections. In particular, some parts of this image
will lie interior to the ball $B(p,r)$, not on its boundary.

\begin{definicion}
The {\em tangential cut locus} of $p$ is the set of vectors ${\bf
v}\in T_pS$ such that $\mbox{\rm Exp}_p(t{\bf v})$ defines a
minimizing segment for $t\in [0,1]$ and not for $t\in [0,T]$ if
$T>1$.  The {\em cut locus} of $p$ in $S$ is the image of the
tangential cut locus under $\mbox{\rm Exp}_p$, and its points are
called {\em cut points.}

The {\em tangential first conjugate locus} of $p$ is the set of
vectors ${\bf v}\in T_pS$ such that $\mbox{\rm Exp}_p$ has nonzero
jacobian at each $t{\bf v}$ with $t\in [0,1)$ and zero jacobian
at~$\bf v$. We then say that $\mbox{\rm Exp}_p({\bf v})$ is the
first conjugate point of $p$ along the geodesic with initial data
$p,{\bf v}$. The set of all such points, equal to the image of the
tangential first conjugate locus under $\mbox{\rm Exp}_p$, is
called {\em first conjugate locus} of $p$ in~$S$.
\end{definicion}

We work in a ball $\overline{B}(p,R)$ which is the exponential image
of the tangential ball $\overline{B}_R^{\mbox{\rm\scriptsize
T}}=\{\,{\bf v}\; ;\; \|{\bf v}\|\leq R\,\}$. Denote by $\mbox{\rm
Cut}^{\mbox{\rm\scriptsize T}}_p$ the part contained in
$\overline{B}_R^{\mbox{\rm\scriptsize T}}$ of the tangential cut
locus. Denote by $\mbox{\rm Cut}_p$ the part contained in
$\overline{B}(p,R)$ of the cut locus. Denote by $\mbox{\rm
Conj}^{\mbox{\rm\scriptsize T}}_p$ the part contained in
$\overline{B}_R^{\mbox{\rm\scriptsize T}}$ of the tangential first
conjugate locus. Denote by $\mbox{\rm Conj}_p$ the part contained in
$\overline{B}(p,R)$ of the first conjugate locus.

We base our discussion of these sets on the work of Myers
\cite{Myers1} and \cite{Myers2}, the reader may also see
\cite{Kobayashi} and \cite{Petersen}. If non-empty, the tangential
loci are described inside $T_pS$ as polar graphs:
\[
\mbox{\rm Conj}^{\mbox{\rm\scriptsize T}}_p = \{\, \|{\bf v}\|
=R_1(\theta )\,\} \quad ,\quad \mbox{\rm Cut}^{\mbox{\rm\scriptsize
T}}_p = \{\, \|{\bf v}\| =R_2(\theta )\,\} \; ,
\]
where $R_1(\theta )$ is smooth and $R_2(\theta )$ is continuous. For
$i=1,2$ the domain of $R_i(\theta )$ is either the whole unit circle
in $T_pS$, in which case the polar graph is a closed curve, or a
finite union of closed arcs in the unit circle, in which case the
polar graph is a finite union of embedded arcs with all the
endpoints on the outer circle
$\partial\overline{B}^{\mbox{\rm\scriptsize T}}_R$. These polar
graphs are compact and so are their exponential images $\mbox{\rm
Cut}_p$ and $\mbox{\rm Conj}_p$.

\begin{lema}\label{finite}
If the metric is real analytic and $\overline{B}(p,r)\neq S$, then
$\mbox{\rm Cut}_p\cap \mbox{\rm Conj}_p\cap \overline{B}(p,r)$ is a
finite set.
\end{lema}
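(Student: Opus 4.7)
My plan is to invoke the structure theory of the tangential cut and first conjugate loci for real-analytic metrics, essentially due to Myers.

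First I would recall that, in the real-analytic setting and restricted to the compact set $\mathrm{Exp}_p^{-1}(\overline{B}(p,r)) \cap \overline{B}^{\mathrm{T}}_R$ (which is compact thanks to the hypothesis $\overline{B}(p,r) \neq S$ together with Remark~\ref{con-borde}), the tangential cut locus $\mathrm{Cut}^{\mathrm{T}}_p$ carries the structure of a finite one-dimensional CW complex whose edges are real-analytic arcs. Under this structure, an interior point $v$ of an edge corresponds to a tangent vector such that exactly two minimizing geodesics from $p$ meet at $\mathrm{Exp}_p(v)$; the vertices are either multiple points, where three or more minimizing geodesics from $p$ arrive, or endpoints, where the two minimizing geodesics coalesce into a single one whose endpoint is a first conjugate point.

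Next I would argue that every $v \in \mathrm{Cut}^{\mathrm{T}}_p \cap \mathrm{Conj}^{\mathrm{T}}_p$ lying in the preimage of $\overline{B}(p,r)$ is forced to be a vertex of this CW complex. Indeed, if $v$ were an interior point of an edge, the local description above would furnish, for every $v'$ in that edge near $v$, a distinct second unit tangent vector at $p$ whose geodesic also reaches $\mathrm{Exp}_p(v')$ at length $\|v'\|$. But $v \in \mathrm{Conj}^{\mathrm{T}}_p$ makes $d\mathrm{Exp}_p$ singular at $v$, and the real-analytic local form of $\mathrm{Exp}_p$ near a first conjugate vector rules out such a persistent, two-to-one behavior along an entire arc. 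Finiteness of the vertex set of a finite CW complex then gives finiteness of the tangential intersection, and hence finiteness of its image in $S$, which contains $\mathrm{Cut}_p \cap \mathrm{Conj}_p \cap \overline{B}(p,r)$.

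I expect the main obstacle to be the rigorous justification of the two structural claims, particularly the assertion that cut-and-conjugate points occur only at vertices of $\mathrm{Cut}^{\mathrm{T}}_p$. Without real-analyticity one could in principle have an entire arc of the cut locus consisting of conjugate points; the analytic hypothesis lets us appeal to the identity theorem for real-analytic functions, reducing the alternative to the case in which a whole real-analytic arc of the cut locus agrees with an arc of the conjugate locus. That alternative is then excluded by the geometric description of the edges of $\mathrm{Cut}^{\mathrm{T}}_p$ as loci where two \emph{distinct} minimizing geodesics from $p$ meet, which is incompatible with the singular behavior of $d\mathrm{Exp}_p$ along the whole arc.
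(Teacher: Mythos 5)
Your plan diverges from the paper's and has genuine gaps. The paper's proof starts from Myers's Lemma~10 of \cite{Myers1}: a cut point of $p$ that is also conjugate must be the exponential image of a vector which is a \emph{local minimum of the norm} restricted to the tangential first conjugate locus $\mathrm{Conj}^{\mathrm T}_p$. One then shows that the norm has only finitely many local minima on $\mathrm{Conj}^{\mathrm T}_p\cap\overline{B}^{\mathrm T}_r$: an accumulation of minima on one analytic component $\mathcal C$ of $\mathrm{Conj}^{\mathrm T}_p$ would, by the identity theorem, force $\mathcal C$ to coincide with a circle centred at the origin, which $\mathrm{Exp}_p$ collapses to a single point, giving $\overline{B}(p,r)=S$ and contradicting the hypothesis.

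Your argument has three problems. First, you posit that the (tangential) cut locus is already a \emph{finite} one-dimensional CW complex; but in this paper the finiteness of the cut-locus graph is derived \emph{after} and \emph{from} this lemma — the finiteness of the multiplicity-one vertices (the conjugate cut points) is precisely what is being proved — so this is circular. Second, the structural claim that cut-and-conjugate vectors are confined to vertices is not true in general: the paper later notes, citing Myers's Lemma~11, that each edge of $\mathrm{Cut}_p$ is smooth ``except perhaps at the conjugate points that it may contain,'' i.e.\ conjugate points can lie in the \emph{interior} of an edge. Third, your closing ``incompatibility'' step — that two distinct minimizing geodesics cannot coexist with singular $d\,\mathrm{Exp}_p$ along a whole arc — fails on the round sphere, where $\mathrm{Cut}^{\mathrm T}_p$ and $\mathrm{Conj}^{\mathrm T}_p$ coincide on the full circle of radius $\pi$ in $T_pS$; what saves the lemma there is precisely the hypothesis $\overline{B}(p,r)\neq S$, which your plan never invokes. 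Closing these gaps essentially forces you back onto Myers's local-minimum characterisation and the analyticity-of-the-norm argument used in the paper.
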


\begin{proof} It is proved in \cite[Lemma 10]{Myers1} that any cut
point for $p$ which is also a conjugate point must be the
exponential image of a vector which is a local minimum for the norm
$\|\cdot\|$ in $\mbox{\rm Conj}^{\mbox{\rm\scriptsize T}}_p$. We
claim that the norm has finitely many local minima in $\mbox{\rm
Conj}^{\mbox{\rm\scriptsize
T}}_p\cap\overline{B}_r^{\mbox{\rm\scriptsize T}}$. Assume the
contrary, i.e. that there are infinitely many local minima for the
norm in $\mbox{\rm Conj}^{\mbox{\rm\scriptsize T}}_p\cap
\overline{B}_r^{\mbox{\rm\scriptsize T}}$; then infinitely many of
them belong to a single connected component $\mathcal C$ of
$\mbox{\rm Conj}^{\mbox{\rm\scriptsize T}}_p\cap
\overline{B}_r^{\mbox{\rm\scriptsize T}}$ and thus accumulate to
some vector ${\bf v}_0\in{\mathcal C}$ with $\|{\bf v}_0\|\leq r$.
The real analytic curve ${\mathcal C}$ and the circle centered at
$\bf 0$ and passing through ${\bf v}_0$ have a contact of infinite
order at ${\bf v}_0$, hence they coincide. But then $\mbox{\rm
Exp}_p$ sends that circle to a single point and
$\overline{B}(p,\|{\bf v}_0\| )$ is all of $S$, thereby forcing
$\overline{B}(p,r)$ to also be all of~$S$.\end{proof}

The results in \cite{Myers1} and \cite{Myers2} describe the cut
locus of a point on a surface and how it is reached by minimizing
geodesic arcs starting at such point (this second part is what most
interests us here). Under our hypothesis (analytic metric or $K\leq
0$) the set $\mbox{\rm Cut}_p$ is an embedded graph in~$S$ with
finitely many vertices and finitely many edges in each ball
$\overline{B}(p,r)$ not equal to~$S$. The points on this graph can
be of three kinds:
\begin{itemize}
\item Vertices of multiplicity $1$, i.e. points at which only one
edge arrives. These are conjugate points for $p\,$, and so they do
not exist if $K\leq 0$ and they are finite in number if the metric
is analytic. Each of these vertices is joined to $p$ by only one
minimizing geodesic arc.
\item Points of multiplicity $2$. These are the points on the
interior of the edges.
\item Vertices of multiplicity $m\geq 3$, i.e. points at which
three or more edges arrive.
\end{itemize}

Each edge is an embedded arc in $S$, and it follows from \cite[Lemma
11]{Myers1} that it is smooth except perhaps at the conjugate points
that it may contain. Thus each edge is smooth except perhaps at
finitely many points. It is proved in \cite[page 97]{Myers2} that
every interior point of an edge, smooth or non-smooth, is joined to
$p$ by exactly two minimizing geodesic arcs (of course, having the
same length). The same argument proves that if infinitely many edges
arrived at some cut point $q$ then there would exist infinitely many
minimizing geodesic arcs, all of the same length $r$, joining $p$
to~$q$ (which must then be conjugate to~$p$). If $K\leq 0$ this does
not happen because there are no conjugate points. If the metric is
real analytic then $\mbox{\rm Exp}_p$ would be a real analytic map
taking an infinity of tangent vectors at $p\,$, all with norm equal
to $r$, to the single point $q$. This would imply
$S=\overline{B}(p,r)$. Therefore a ball $\overline{B}(p,r)$ not
equal to $S$ does not contain any vertex of infinite multiplicity.
Once vertices of multiplicity $1$ are finite in number and vertices
of infinite multiplicity do not exist, the total number of vertices
and edges is finite due to topological reasons.

Let $\gamma (t)$, $1\leq t\leq 1$, be a geodesic with $\gamma (0)=p$
and ${\bf v}=\gamma'(0)$ a vector which is a local minimum for $r$
in the tangential first conjugate locus, then $q=\gamma
(1)=\mbox{\rm Exp}_p({\bf v})$ is a first conjugate point of $p$
along $\gamma$. Figure~\ref{figure:costura} shows the behavior near
$q$ of the geodesics which start at $p$ with initial velocity close
to~$\bf v$.

\begin{figure}[h]
\includegraphics[scale=0.8]{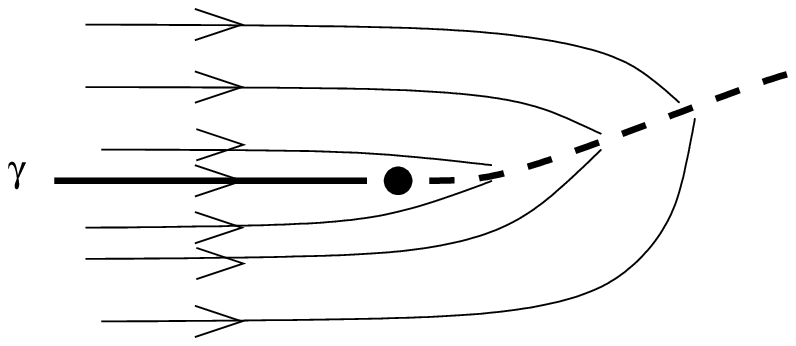}
 \caption{}
 \label{figure:costura}
\end{figure}

Consider a circular arc $C_0\subset T_pS$ centered at $\bf 0$ and
containing $\bf v$ as midpoint. Figure~\ref{figure:ortogonales}
shows the exponential image $\Gamma_q$ of $C_0$ as well as
orthogonal trajectories of the geodesics displayed in
Figure~\ref{figure:costura}, which trajectories are subsets of the
exponential images of circular arcs centered at $\bf 0$. If
$\gamma$ is the only minimizing path from $p$ to~$q$ then the
orthogonal trajectories shown in Figure~\ref{figure:ortogonales}
lie on the boundaries of balls centered at~$p\,$; in this case the
part of the cut locus on Figures~\ref{figure:costura}
and~\ref{figure:ortogonales} will be the dotted line. If there are
more minimizing paths from $p$ to~$q$ then some part of the
orthogonal trajectories will lie on the boundary and another part
will lie in the interior of the corresponding ball; in this case
the cut locus will have, in addition to the dotted line shown,
other branches ending at~$q$.

\begin{figure}[h]
\includegraphics[scale=0.5]{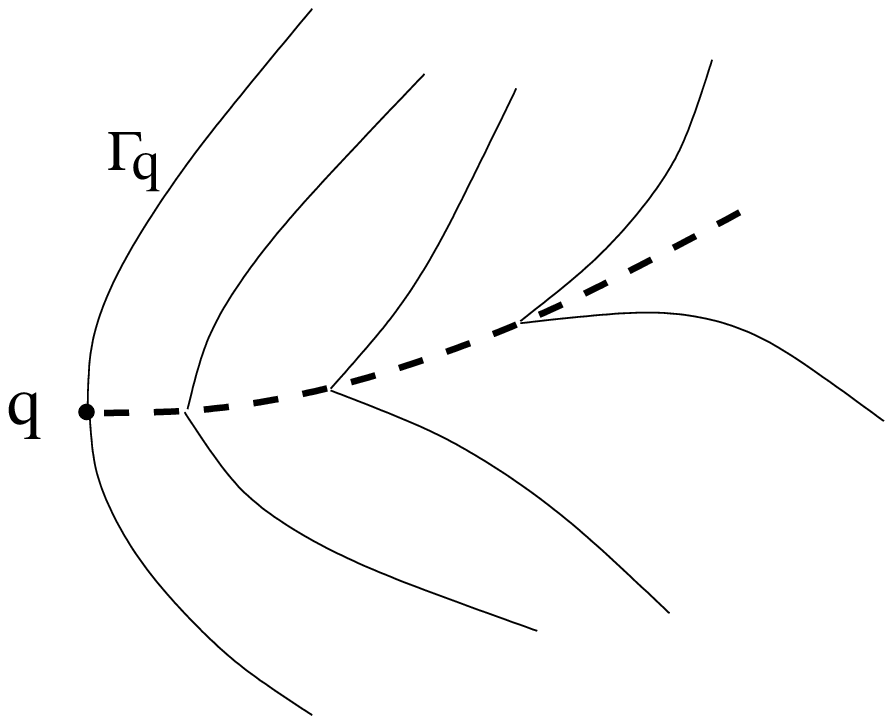}
 \caption{}
 \label{figure:ortogonales}
\end{figure}

\begin{obs}\label{inf-curv}
{\rm Since the geodesics in Figure~\ref{figure:costura} meet in
pairs making an angle which tends to zero as the cut point tends to
$q$, the image $\Gamma_q=\mbox{\rm Exp}_p(C_0)$ is of class
${\mathcal C}^1$ at the point $q$, but not of class~${\mathcal
C}^2$. In fact the geodesic curvature of $\Gamma_q$ at $q$ (defined
as limit of the curvatures at points close to $q$) is a positive
infinite multiple of~$\gamma'(1)$ because small arcs of $\Gamma_q$
around $q$ are supported by distance circles of arbitrarily small
radius centered at points of the dotted line.}
\end{obs}

We next define a type of point which is of great importance in our
context.

\begin{definicion}
A {\em middle point} is a cut point $q$ at which two minimizing
geodesic arcs (of equal length) issued from $p$ meet `head on', i.e.
the velocities of the two geodesic arcs at $q$ are each a negative
multiple of the other.
\end{definicion}

If $\gamma_1$ and $\gamma_2$ are those two minimizing arcs, then
$\gamma_1$ followed by reversed $\gamma_2$ defines a geodesic loop
based at~$p$ and having $q$ as middle point, hence the name.

\begin{lema}\label{coll-finite}
Suppose the metric is real analytic or it satisfies $K\leq 0$. If
the ball $\overline{B}(p,r)$ is not all of $S$, then there are only
finitely many middle points inside it.
\end{lema}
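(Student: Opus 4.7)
The plan is an argument by contradiction. Assume $\overline{B}(p,r)$ contains infinitely many middle points. I would begin by localizing them to a single edge of the (known-to-be-finite) cut-locus graph $\mbox{\rm Cut}_p\cap\overline{B}(p,r)$: its finitely many vertices each admit only finitely many head-on pairs, so infinitely many middle points must accumulate in the interior of a single edge $e$. Parametrizing $e$ by arc length $s$, and staying away from the (empty, or finite in number by Lemma~\ref{finite}) conjugate points, the two minimizing arcs $\gamma_1(s,\cdot),\gamma_2(s,\cdot)$ from $p$ to $q(s)$ depend smoothly (resp.\ analytically) on $s$, so the middle-point condition at $q(s)$ is the vanishing of
\[
\phi(s):=1+\bigl\langle \mathbf{u}_1(s),\mathbf{u}_2(s)\bigr\rangle,\qquad \mathbf{u}_i(s):=\gamma_i'(s,L(s)),
\]
with the hypothesis yielding a sequence of zeros of $\phi$ accumulating at some $s_0$.

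In the real-analytic case, $\phi$ is analytic on each sub-arc of $e$ between consecutive conjugate points, so an accumulation of zeros forces $\phi\equiv 0$ on such a sub-arc. Every point of that sub-arc would then be a middle point, yielding a real-analytic one-parameter family of smooth geodesic loops $\gamma_s$ at $p$ with moving midpoints $q(s)\in e$; a first-variation argument exploiting $\mathbf{u}_2=-\mathbf{u}_1$ shows the loop lengths are in fact constant, so the variation $V:=\partial_s\gamma_s|_{s_0}$ is a non-trivial Jacobi field along $\gamma_{s_0}$ with $V(0)=V(2L)=0$ and $V(L)=q'(s_0)\neq 0$, smoothly glued at the midpoint by the head-on condition. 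The two restrictions $V|_{[0,L]}$ and $V|_{[L,2L]}$, regarded as Jacobi fields along $\gamma_1$ and along (reversed) $\gamma_2$, are each uniquely determined by their values at their (non-conjugate) endpoints; the smoothness match of the derivatives at the midpoint then expresses as an analytic identity relating the Jacobi-field behaviors of the two halves, which I aim to show is incompatible with the free motion of $q(s)$ along $e$.

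For the case $K\le 0$, the index form along every geodesic is positive definite, so no Jacobi field with two zeros can exist; hence the variational field $V$ above cannot be constructed, and the same positivity shows zeros of $\phi$ to be isolated on $e$, precluding accumulation. An alternative route I would also sketch is via Gauss--Bonnet: any disk bounded by the piecewise-geodesic loop of a middle point (one corner at $p$ of interior angle $\alpha$) satisfies $\int K\,dA=\pi+\alpha>0$, against $K\le 0$, so every such loop is homotopically non-trivial; lifting to the Cartan--Hadamard universal cover $\tilde S$, distinct middle points correspond to distinct non-trivial deck transformations $\varphi\in\pi_1(S,p)$ with $d_{\tilde S}(\tilde p,\varphi(\tilde p))\le 2r$, of which only finitely many exist in the compact ball $\overline{B}_{\tilde S}(\tilde p,2r)$ by proper discontinuity.

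The main technical obstacle is the analytic case: closing the Jacobi-field argument cleanly requires carefully combining the head-on smooth gluing at the midpoint with the uniqueness of Jacobi fields at non-conjugate endpoints of each half, so as to extract from $\phi\equiv 0$ on $e$ a metric rigidity that the assumed free motion of midpoints violates.
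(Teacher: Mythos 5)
Your $K\le 0$ argument via the Cartan--Hadamard universal cover (your third sketched route) is valid and is a genuinely different approach from the paper's: each middle point $q$ produces a piecewise-geodesic loop at $p$ which is smooth at $q$ (head-on condition), so its lift to $\widetilde S$ is a geodesic from $\tilde p$ to $\varphi(\tilde p)$ of length $\le 2r$ with midpoint over $q$; distinct middle points give distinct geodesics, hence distinct non-trivial deck transformations $\varphi$ with $d(\tilde p,\varphi(\tilde p))\le 2r$, of which there are finitely many. The paper instead extracts a sequence $\mathbf w_n = 2L_n\mathbf v_n\to\mathbf w$ converging \emph{tangentially} and shows the limiting direction lies in $\ker\big(d\operatorname{Exp}_p\big)_{\mathbf w}$, making $p$ self-conjugate, which is impossible when $K\le 0$. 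Both are correct; yours leans on the group action and proper discontinuity, the paper's stays entirely infinitesimal. (Your first two $K\le 0$ variants are weaker: positive-definiteness of the index form rules out conjugate points but does not by itself make zeros of a merely smooth $\phi$ isolated, and the Gauss--Bonnet computation only shows the loops are essential, which you still need to combine with the deck-transformation count.)

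The genuine gap is the real-analytic case, which you explicitly leave open. Two concrete difficulties with your edge-parametrization plan: (i) the accumulation point of your middle points need not lie in the interior of an edge of $\operatorname{Cut}_p$ --- it may be a vertex of higher multiplicity or a conjugate point, where the smooth/analytic dependence of the two minimizing arcs on $s$ fails; and (ii) even where $\phi$ is analytic and $\phi\equiv 0$, turning the resulting one-parameter family of loops into a contradiction via Jacobi fields requires the rigidity argument you only gesture at. The paper sidesteps both issues by never parametrizing along the cut locus. Instead it works on the unit circle in $T_pS$: from the accumulating sequence $(\mathbf v_n,L_n)\to(\mathbf v_0,L)$ it forms the analytic function $f(\mathbf v):=d\big(\operatorname{Exp}_p(\{t\mathbf v : |t-2L|\le\varepsilon\}),p\big)$, which vanishes on an accumulating sequence of directions and hence identically; by Gauss's Lemma the resulting loops all have length $2L$, giving $\operatorname{Exp}_p(2L\mathbf v)=p$ on an arc of directions, and analytic continuation then forces $\overline{B}(p,L)=S$, contradicting $\overline{B}(p,r)\neq S$. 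You would need either to adopt this device or to fully close the Jacobi-field rigidity step, handling accumulation at vertices and conjugate points along the way.
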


\begin{proof}
Suppose that, on the contrary, there is an infinity of such points.
Then there is an infinite sequence $\{\, ({\bf v}_n,L_n)\,\}$ where
${\bf v}_1,{\bf v}_2,{\bf v}_3,\dots $ are unit vectors in $T_pS$
and $L_1,L_2,L_3,\dots $ are lengths bounded by the number $r$, so
that the points $\mbox{\rm Exp}_p(L_n{\bf v}_n)$ are all middle
points of loops based at~$p$. Therefore we have $\mbox{\rm
Exp}_p(2L_n{\bf v}_n)=p\,$ for all~$n$. Since $\{ L_n\}$ is bounded
we extract a subsequence, again denoted $\{\, ({\bf v}_n,L_n)\,\}$,
that converges to some pair $({\bf v}_0,L)$ with $L\leq r$.

We prove first that this cannot happen for an analytic metric. There
is an $\varepsilon >0$ such that for every unit vector ${\bf v}$
close enough to ${\bf v}_0$ the geodesic segment
\[ \Gamma ({\bf v}):=\{\,\mbox{\rm Exp}_p(t{\bf v})\; ;\;
2L-\varepsilon\leq t\leq 2L+\varepsilon\,\} \] is small and very
close to $p$. Fix one such $\varepsilon >0$ and consider the
function:
\[ f({\bf v}):=d\,\big(\, \Gamma ({\bf v})\, ,\, p\,\big)\; . \] Once
$\varepsilon$ is fixed, for $\bf v$ close enough to ${\bf v}_0$ this
distance is attained at a point interior to the segment $\Gamma
({\bf v})$, hence f is analytic in a small enough neighborhood $C$
of ${\bf v}_0$ in the unit circle. At the same time $f$ vanishes on
an infinite sequence of points of $C$ converging towards ${\bf
v}_0$, which forces $f\equiv 0$. The circular arc $C$ thus
determines a $1$-parameter family of geodesic loops based at $p$,
which must all have the same length by Gauss' Lemma. It follows that
for all ${\bf v}\in C$ we have $\mbox{\rm Exp}_p(2L{\bf v})=p$ while
the entire loop of length $2L$ with initial data $p,{\bf v}$ is
contained in $\overline{B}(p,L)$. By analytic prolongation, we
obtain that the exponential image $B$ of the tangential disk $\{
\|{\bf v}\|\leq 2L\}$ is contained in
$\overline{B}(p,L)\subseteq\overline{B}(p,R)$ and that $\mbox{\rm
Exp}_p$ maps the tangential circle $\{ \|{\bf v}\| =2L\}$ to~$p\,$.
But then we would have $B=S=\overline{B}(p,L)$, and
$\overline{B}(p,r)$ would be all of $S$ because $r\ge L$.

We now do the proof for a metric with $K\leq 0$. Assuming the middle
points $\mbox{\rm Exp}_p(L_n{\bf v}_n)$ to be pairwise distinct, the
vectors $L_n{\bf v}_n$ are pairwise distinct and may be assumed to
be all different from their limit. Then the sequence of vectors
${\bf w}_n=2L_n{\bf v}_n$ has a subsequence $\{{\bf w}_{n_k}\}$
which converges {\em tangentially} to ${\bf w}=2L{\bf v}_0$. This
means that not only is $\bf w$ the limit of $\{{\bf w}_{n_k}\}$, but
the unit vectors $({\bf w}_{n_k}-{\bf w})/\| {\bf w}_{n_k}-{\bf
w}\|$ also have a limit ${\bf u}\in T_pS$. Then $\bf u$ is a unit
vector whose image under the differential of $\mbox{\rm Exp}_p$ at
$\bf w$ is zero, thus causing $p=\mbox{\rm Exp}_p({\bf w})$ to be
conjugate to itself which is impossible if~$K\leq 0$.
\end{proof}

We want to describe the geometry of the boundaries of metric balls
centered at $p$, for a metric which is analytic or satisfies $K\le
0$. Any $q\in\partial B(p,r)$ which is not a cut point is joined
to $p$ by a unique geodesic arc of length~$r$, along which $q$ is
not conjugate to~$p$, and the boundary is smooth (analytic)
near~$q$; also $q$ is not a self-intersection point of the image
$\mbox{\rm Exp}_p\big(\{\,{\bf v}\in T_pS\, ;\, \|{\bf
v}\|=r\}\big)$. Thus the only special points the boundary can have
are the points it shares with the cut locus. We see in
Figure~\ref{figure:ortogonales} that the boundary develops a
corner when it hits an endpoint of the cut locus graph, but its
topology does not change. For a short while after that moment, the
corner angle varies but otherwise the geometry of the boundary
remains unchanged.

We are going to see that, as $r$ increases, the geometry of the
boundary $\partial B(p,r)$ changes only when said boundary hits a
cut point which is either a conjugate point, a middle point of
multiplicity~$2$, or a vertex of higher multiplicity in the cut
locus graph. For the moment let us see what happens when the
boundary hits a middle point of multiplicity~$2$. Let
$\gamma_1,\gamma_2$ be the two minimizing geodesic arcs with
$\gamma_1(0)=\gamma_2(0)=p$, $\gamma_1 (1)=\gamma_2(1)=q$, and
$\gamma_1'(1)=-\gamma_2'(1)$. For $i=1,2$ let $C_i$ be a small
circular arc centered at $\bf 0$ in $T_pS$ and having $\gamma_i'(0)$
as midpoint. The exponential images of $C_1,C_2$ are ${\mathcal
C}^1$ curves $\Gamma_1,\Gamma_2$ meeting tangentially at~$q$.

If $\Gamma_1$ and $\Gamma_2$ curve away from each other toward
$\gamma_2$ and $\gamma_1$ respectively, then as $r$ increases the
boundaries $\partial B(p,r)$ evolve near $q$ as in
Figure~\ref{figure:ojales}.

\begin{figure}[h]
\includegraphics[scale=0.3]{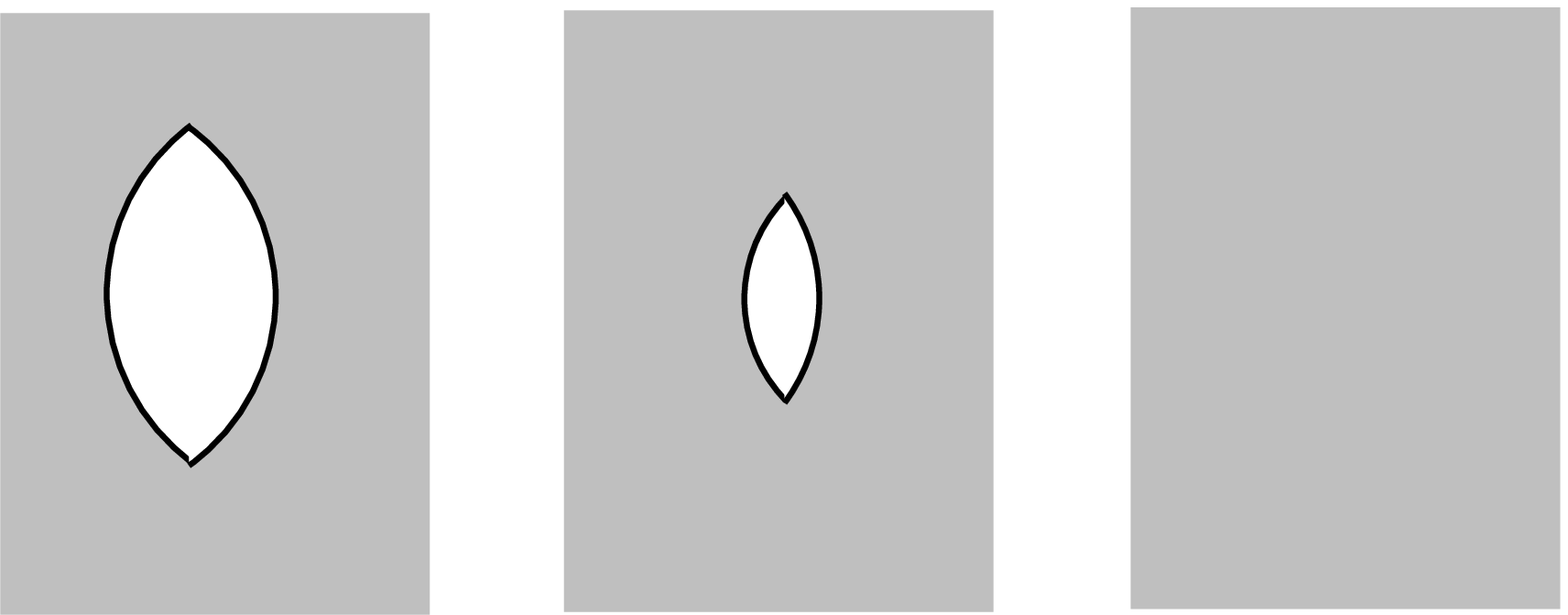}
 \caption{}
 \label{figure:ojales}
\end{figure}

If $\Gamma_1$ and $\Gamma_2$ curve away from each other toward
$\gamma_1$ and $\gamma_2$ respectively, then as $r$ increases the
boundaries $\partial B(p,r)$ evolve near $q$ as in
Figure~\ref{figure:beso}.

\begin{figure}[h]
\includegraphics[scale=0.3]{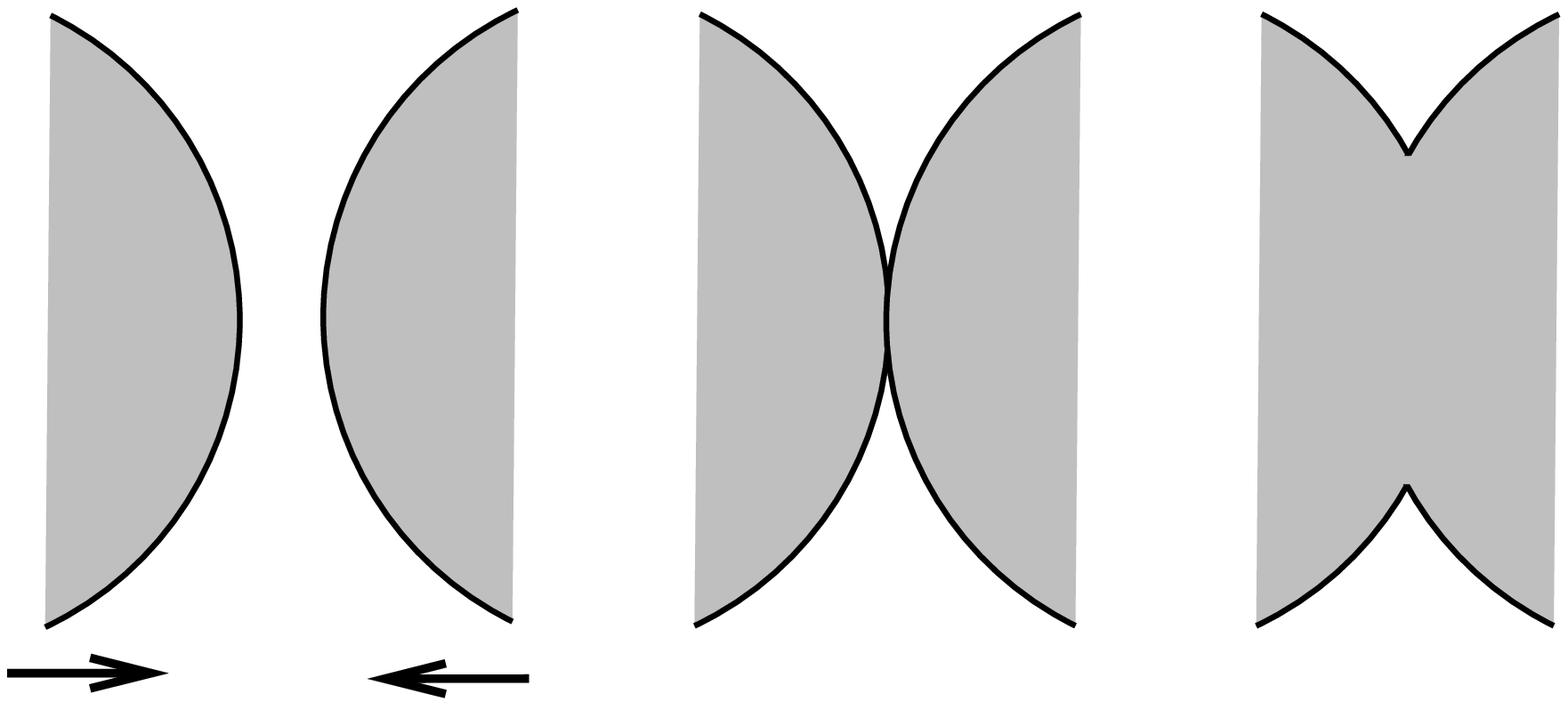}
 \caption{}
 \label{figure:beso}
\end{figure}

\begin{prop}\label{solo-beso}
If $K\leq 0$, then the only possible local geometry when the ball
boundary hits a middle point of multiplicity~$2$ is the one shown in
Figure~\ref{figure:beso}.
\end{prop}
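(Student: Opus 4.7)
The plan is to compute the geodesic curvature of the arcs $\Gamma_1,\Gamma_2$ at $q$ via the Jacobi equation, show that when $K\le 0$ each $\Gamma_i$ strictly bends toward $p$ along the corresponding radial geodesic, and then read off that this is precisely the beso configuration.

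First, I use that $K\le 0$ forces $\overline{B}(p,R)$ to contain no conjugate points of $p$. Hence $\mbox{\rm Exp}_p$ is a local diffeomorphism near each $r\gamma_i'(0)$, so $\Gamma_1$ and $\Gamma_2$ are genuinely smooth arcs of the distance circle $\partial B(p,r)$ through $q$, with no cusps or higher-order degeneracies to worry about.

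Next I pass to geodesic polar coordinates $(r,\theta)$ around $p$, where the metric takes the form $dr^2+f(r,\theta)^2\,d\theta^2$ with $f(0,\theta)=0$ and $f_r(0,\theta)=1$. The Jacobi equation $f_{rr}+Kf=0$ together with $K\le 0$ gives $f_{rr}=-Kf\ge 0$, so $f_r$ is non-decreasing and stays at least $1$, while $f$ stays strictly positive, for all $r>0$ along each radial geodesic. A direct computation in these coordinates shows that the distance circle $\{r=r_0\}$ has geodesic curvature $\kappa_g=-f_r/f<0$ with respect to the outward unit normal $\partial_r$. Translating back to $S$, at the point $q$ the acceleration of the unit-speed parametrization of $\Gamma_i$ has strictly positive component in the direction $-\gamma_i'(r)$.

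Finally I read off the configuration. The common tangent line $L$ of $\Gamma_1,\Gamma_2$ at $q$ (which exists because $\gamma_1'(r)=-\gamma_2'(r)$) splits a small neighborhood of $q$ into two local half-regions, one containing the direction $+\gamma_1'(r)$ and the other containing $-\gamma_1'(r)=\gamma_2'(r)$. The geodesic $\gamma_1$ crosses $L$ transversally at $q$ with velocity $\gamma_1'(r)$, so the piece of $\gamma_1$ immediately preceding $q$ lies in the $\gamma_2'(r)$-half-region; symmetrically, the piece of $\gamma_2$ immediately preceding $q$ lies in the $\gamma_1'(r)$-half-region. Since $\Gamma_1$ bends into the $\gamma_2'(r)$-half-region it bends toward $\gamma_1$, and likewise $\Gamma_2$ bends toward $\gamma_2$. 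This is exactly Figure~\ref{figure:beso}. The only real obstacle is the sign bookkeeping; once $\kappa_g<0$ with respect to the outward normal is in hand, the choice between Figures~\ref{figure:ojales} and~\ref{figure:beso} is essentially automatic.
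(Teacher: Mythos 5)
Your proof is correct and takes essentially the same route as the paper: both pull back to (geodesic) polar coordinates around $p$, use $\lambda_{rr}=-K\lambda\geq 0$ to conclude that distance circles have strictly negative geodesic curvature with respect to the outward normal, and deduce that each colliding front must bend toward $p$, which forces the Figure~\ref{figure:beso} configuration. The only difference is that you make the final "reading off" step explicit via the half-region bookkeeping, whereas the paper asserts it directly once the sign of the curvature is established.
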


\begin{proof}
In this case there are no conjugate points. The exponential map at
$p\,$ is a local diffeomorphism $T_pS\to S$ which pulls the metric
on $S$ back to a metric $\overline{g}$ on the tangent space. If
$(r,\theta )$ are orthonormal polar coordinates in
$T_pS\setminus\{{\bf 0}\}$, then $\overline{g}=dr^2+\lambda(r,\theta
)^2\, d\theta^2$ where $\lambda$ is a positive smooth function with
$\lim_{r\to 0}\lambda =0$, $\lim_{r\to 0}\lambda_r =1$, and
$-\lambda_{rr}/\lambda$ equal to the Gaussian curvature
of~$\overline{g}$. This yields $\lambda_{rr}\ge 0$ and $\lambda_r\ge
1$. The unit tangent vector to any Euclidean circle in $T_pS$
centered at $\bf 0$ is ${\bf t}=(1/\lambda)\partial_{\theta}$ and
one easily computes $\overline{g}\big(\,\nabla_{\bf t}{\bf t}\, ,\,
\partial_r\,\big) =-\lambda_r/\lambda <0$, hence said circle
curves strictly inward with respect to $\overline{g}$. Therefore the
boundary of any ball centered at $p\,$ in $S$ also curves strictly
inward at every non-corner point, and when it hits a middle point of
multiplicity~$2$ the only possible local geometry is the one shown
in Figure~\ref{figure:beso}, with the two colliding fronts having
finite non-zero curvature.
\end{proof}

The situation in Figure~\ref{figure:ojales} occurs in particular
when $q$ is conjugate to $p$ along both $\gamma_1$ and $\gamma_2$.
If $q$ is conjugate to $p$ along only one of these arcs, say
$\gamma_1$ to fix ideas, then $\Gamma_1$ curves towards $\gamma_2$
with infinite curvature at~$q$, while $\Gamma_2$ has finite
curvature at~$q$. The topology is then as in
Figure~\ref{figure:ojales} but there are several possibilities for
the geometry. One possibility is Figure~\ref{figure:ojales}. Another
possibility is the first image in Figure~\ref{figure:varios} (where
$\Gamma_2$ curves toward $\gamma_2$) with the shrinking hole now in
the shape of a crescent moon. Other possibilities (not depicted)
correspond to $\Gamma_2$ having zero curvature at~$q$.

\begin{figure}[h]
\includegraphics[scale=0.3]{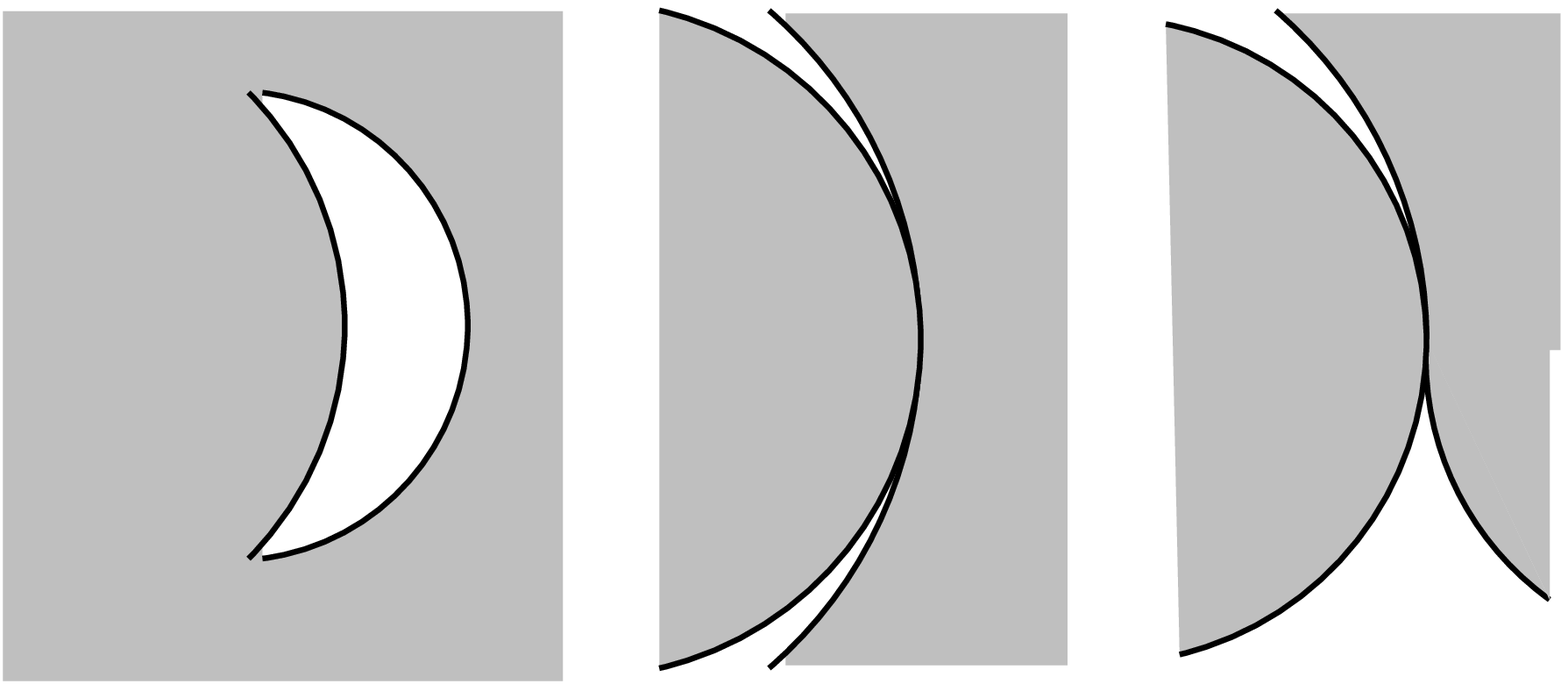}
 \caption{}
 \label{figure:varios}
\end{figure}

If $q$ is neither conjugate to $p$ along $\gamma_1$ nor along
$\gamma_2$, and the metric is real analytic, then
$\Gamma_1,\Gamma_2$ are real analytic arcs tangent at~$q$. If they
had a contact of infinite order at $q$, then they would coincide and
all of their points would be middle points. Thus
Lemma~\ref{coll-finite} implies that $\Gamma_1$ and $\Gamma_2$ have
a contact of finite order at~$q$. Therefore either $\Gamma_1$ stays
on one side of $\Gamma_2$, or these two arcs cross each other
(tangentially) at~$q$. If $\Gamma_1$ stays on one side of $\Gamma_2$
then the topology is as in Figures \ref{figure:ojales}
or~\ref{figure:beso}, but the geometry has more possibilities than
the ones shown in Figures \ref{figure:ojales} and~\ref{figure:beso}.
Some (not all) of the extra possibilities are shown in
Figure~\ref{figure:varios}. If $\Gamma_1$ crosses $\Gamma_2$
tangentially at~$q$ then the geometry is equal or very similar to
that in Figure~\ref{figure:uf}: the boundary containing $q$ has a
cusp at $q$, while the nearby boundaries contain a corner whose
branches make a nonzero angle.

\begin{figure}[h]
\includegraphics[scale=0.3]{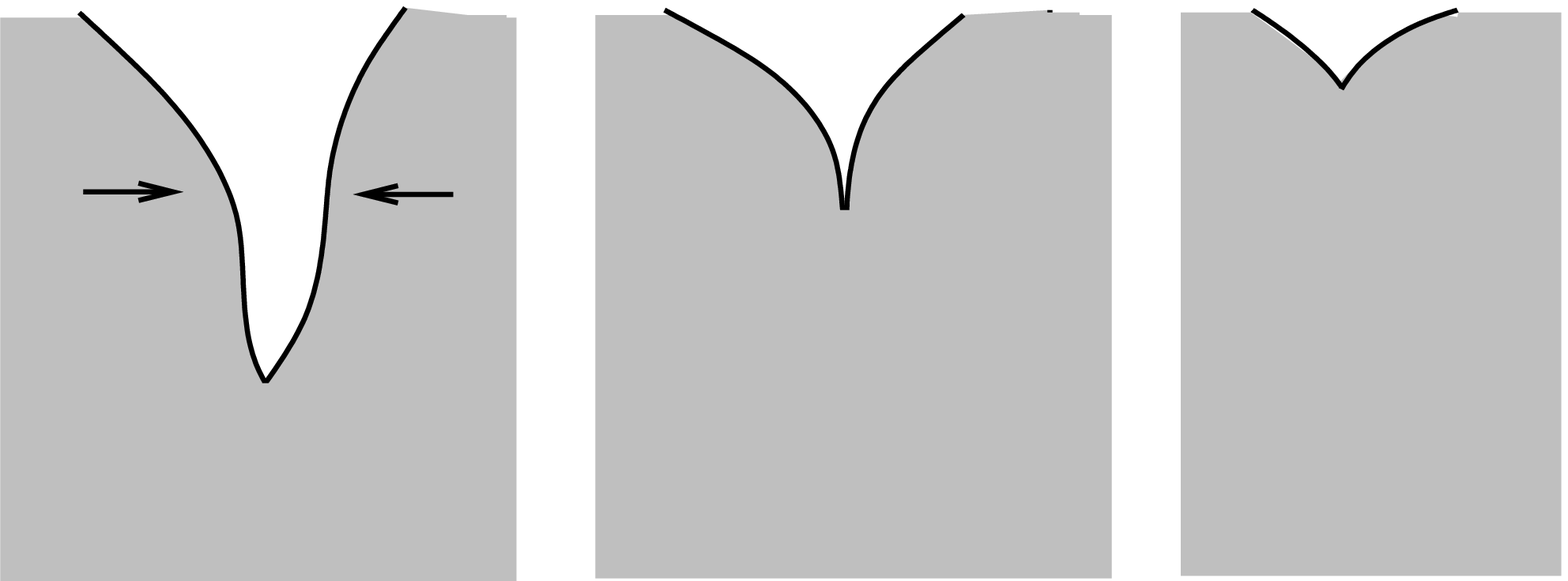}
 \caption{}
 \label{figure:uf}
\end{figure}

We briefly discuss now higher multiplicity vertices of the cut locus
graph. Myers shows in \cite{Myers1} and \cite{Myers2} that a vertex
of multiplicity $m$ is joined to $p$ by exactly $m$ minimizing
geodesic arcs. For example, Figure~\ref{figure:Y} describes how the
minimizing geodesic arcs issued from $p$ reach a $Y$-shaped part of
the cut locus; we see three geodesic arcs ending at the triple
point.

\begin{figure}[h]
\includegraphics[scale=0.6]{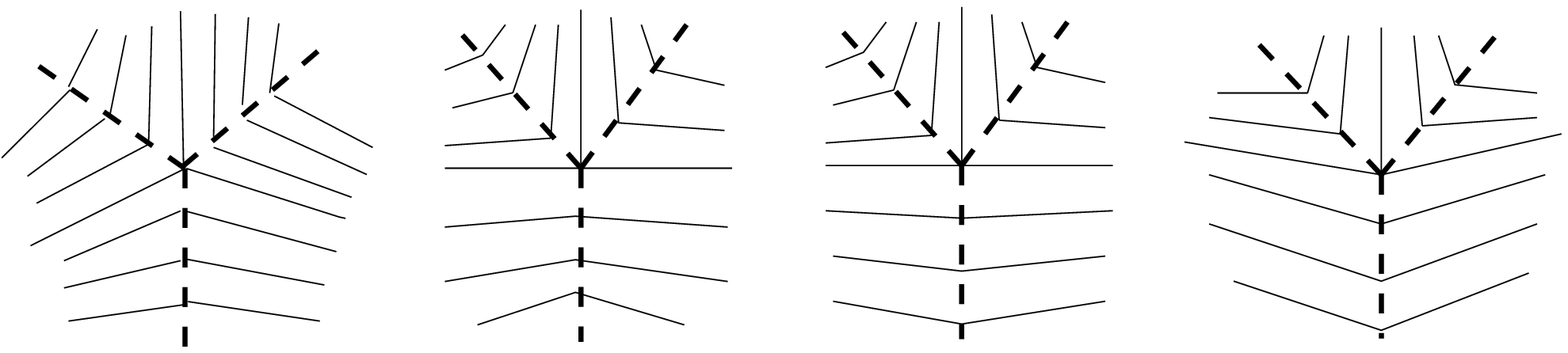}
 \caption{}
 \label{figure:Y}
\end{figure}

If two of the minimizing geodesics reaching the triple point make
flat angles when they meet (thereby making the triple point a middle
point as well) then the situations described in the second and third
images in Figure~\ref{figure:Y} are the only possible ones, because
middle points are isolated.

If the situation is as in the first or second image in
Figure~\ref{figure:Y} then the boundary evolves toward the triple
point as shown in Figure~\ref{figure:nablas}: we see a triangular
hole decreasing in size until it disappears.

\begin{figure}[h]
\includegraphics[scale=0.4]{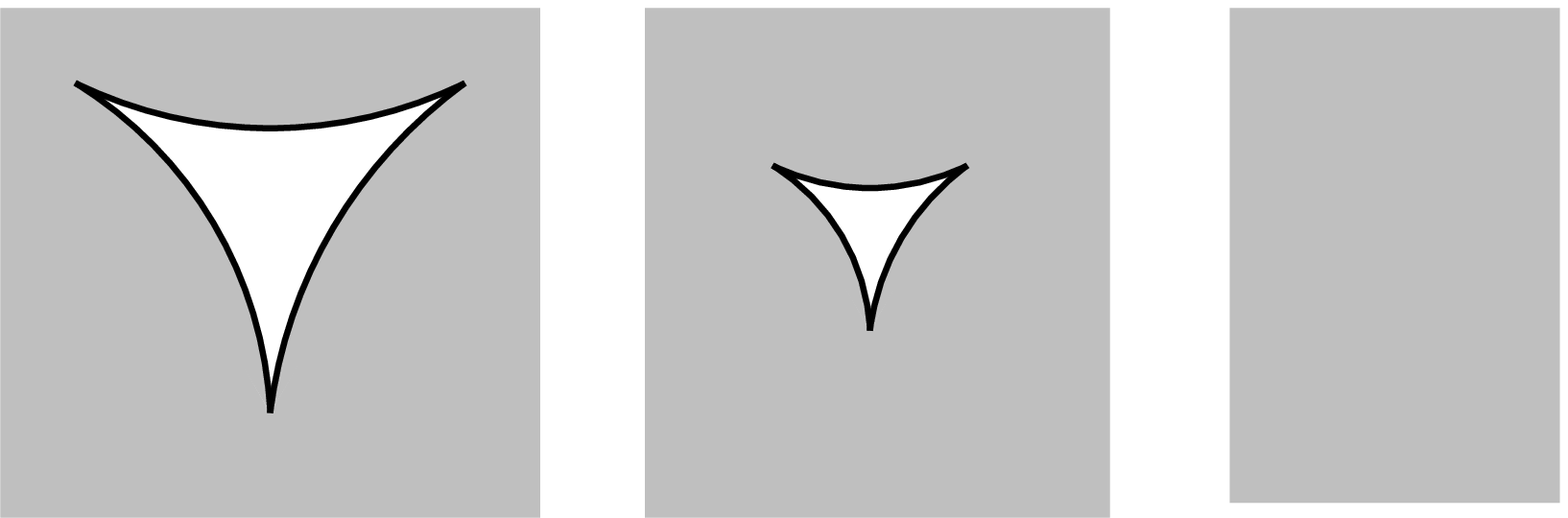}
 \caption{}
 \label{figure:nablas}
\end{figure}

In the case represented by the third image in Figure~\ref{figure:Y}
the boundary evolves as shown in Figure~\ref{figure:T}: it has
$3-1=2$ corners before hitting the triple point, one cusp when
hitting said point, and a single corner afterwards.

\begin{figure}[h]
\includegraphics[scale=0.3]{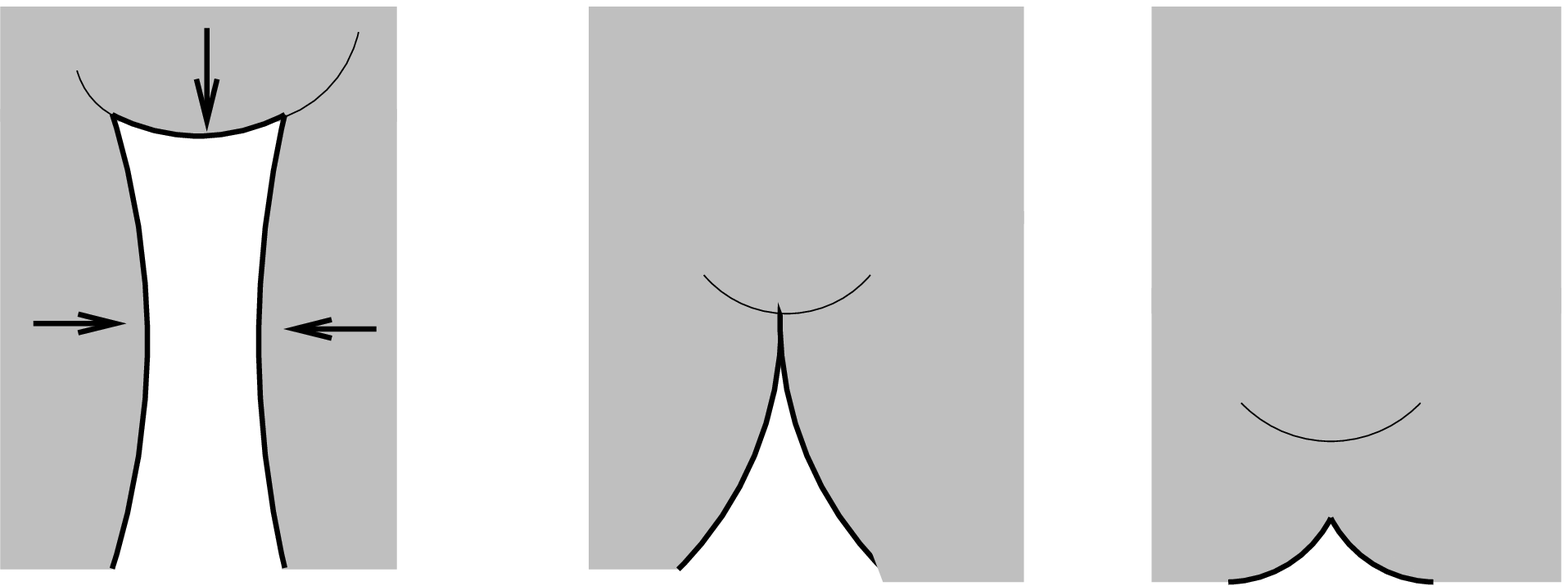}
 \caption{}
 \label{figure:T}
\end{figure}

The last image in Figure~\ref{figure:Y} occurs when two minimizing
geodesics make a concave angle at the triple point (measured without
going through the other geodesic). In this case the boundary evolves
almost like in Figure~\ref{figure:T}, the only difference being that
it also has a corner when hitting the triple point and so no cusp is
created in this case.

\begin{definicion}
The {\em interior angle} at a corner point of $\partial B(p,r)$ is
the angle between the two boundary branches ending at that point,
measured through the interior of~$B(p,r)$.
\end{definicion}

We now list together all possibilities when the boundary $\partial
B(p,r)$ touches an interior point $q$ of an edge of the cut locus,
i.e. a point of multiplicity~$2$. We then have exactly two
minimizing geodesic arcs joining $p$ to $q$, both of length $r$. In
particular, this is a self-intersection point of the image
$\mbox{\rm Exp}_p\big(\{\,{\bf v}\in T_pS\, ;\, \|{\bf
v}\|=r\}\big)$ and out of the four branches of this image that reach
$q$ only two branches are part of the boundary. We split this case
into two subcases:
\begin{itemize}
\item If $q$ is not a middle point then the part of $B(p,r)$
inside a small neighborhood of $q$ is the union $B'\cup B''$ of two
pieces whose boundaries near $q$ are at least ${\mathcal C}^1$
(recall the phenomenon in Figure~\ref{figure:ortogonales}) and
non-tangent at $q$. It follows that in such a case $\partial B(p,r)$
has a corner at $q$ and the interior angle at this corner is a
concave angle $\alpha\in (\pi ,2\pi )$. By Remark~\ref{inf-curv}, if
$q$ is also a conjugate point then at least one of the two branches
of the boundary has infinite curvature at~$q$. \item If $q$ is a
middle point then the boundaries of the two pieces $B',B''$ are
tangent at $q$, and one of the three phenomena described in Figures
\ref{figure:ojales}, \ref{figure:beso}, \ref{figure:varios},
\ref{figure:uf} occurs at~$q$. One phenomenon (see
Figure~\ref{figure:ojales} and the first image in
Figure~\ref{figure:varios}) consists on the boundary losing a small
connected component with two corners; the interior angles at the
corners remain inside $(\pi ,2\pi )$ during this process but both
tend to $2\pi$ as the hole's size tends to~$0$. Another phenomenon
(see Figure~\ref{figure:beso} and the last two images in
figure~\ref{figure:varios}) is an increase in the connectivity of
the ball; the interior angles at the created corners are both in
$(\pi ,2\pi)$ except at the instant when they are created. The third
possible phenomenon (see Figure~\ref{figure:uf}) is a cusp point on
the boundary when it touches $q$ and a single corner before and
after that instant; the topology remains unchanged during this
process.
\end{itemize}

Next we describe in detail what happens when the boundary
$\partial B(p,r)$ touches a point $q$ of multiplicity $m\geq 3$ in
the cut locus. This happens only for a finite number of values of
the radius~$r$ when the metric is analytic or with $K\leq0$,
because there is only a finite number of multiple points in such
cases. The minimizing geodesics from $p$ to $q$ make up a family
$\mathcal G$ with $m$ elements, all with the same length. Two
cases are possible:
\begin{itemize}
\item The angles at $q$ between consecutive geodesics in the
family $\mathcal G$ are all convex angles. Then $\partial B(p,r)$
has a $m$-sided polygonal component which shrinks down to the point
$q$ and then disappears. See Figure~\ref{figure:nablas} for the
$m=3$ case. (This multiple point will be a middle point if two {\em
non-consecutive} geodesics in $\mathcal G$ make flat angles).
\item There is a consecutive pair of geodesics in the family
$\mathcal G$ making a flat angle (in which case $q$ will be a middle
point) or a concave angle at~$q$. Then all other consecutive pairs
must make convex angles at~$q$. In this case the boundary $\partial
B(p,r)$ either has an $m$-sided polygonal component shrinking to~$q$
(Figure~\ref{figure:nablas} shows this for $m=3$) or its topology
remains unchanged during the process: $m-1$ corners before hitting
$q$,  one cusp or one corner at $q$ when hitting it, and a single
corner after hitting~$q$ (Figure~\ref{figure:T} shows this for
$m=3$).
\end{itemize}

\begin{obs}\label{corners}
A careful examination of the above study shows that the boundary
$\partial B(p,r)$ has corners, in each connected component of
positive length, forever after hitting the cut locus of~$p\,$.

The right-hand side of (\ref{ball-length}) is the obvious estimate
for the length of the exponential image of the $r$-circle on $T_pS$.
If $\partial B(p,r)$ has corners then said exponential image has
parts lying interior to $B(p,r)$. If the inequality $K\geq -k^2$ is
strict somewhere in $B(p,r)$ then the exponential image will have
length smaller that the right-hand side of~(\ref{ball-length}).
Therefore (\ref{ball-length}) can be an equality only if $B(p,r)$ is
disjoint from the cut locus of $p\,$ and $K\equiv -k^2$ inside
$B(p,r)$.
\end{obs}

Let $R$ be the radius in Remark~\ref{con-borde}. In $B(p,R)$ we
consider the set $\mathcal N$ which comprises all conjugate points
in the cut locus, all middle points of multiplicity~$2$ in the cut
locus, and all vertices of multiplicity $3$ or greater of the cut
locus. By the above discussion, if the metric is analytic or
satisfies $K\leq 0$ then the set $\mathcal N$ is finite inside each
ball $\overline{B}(p,r)$ with $r<R$, and as $r$ increases the
boundary $\partial B(p,r)$ gains (or loses) corners only by touching
this set. This implies finiteness of the number of corner points on
each ball boundary. Since the set $\mathcal N$ is finite in each
$\overline{B}(p,r)$ with $r<R$, the distances from the points in
$\mathcal N$ to $p\,$ can be arranged into an increasing sequence:
$r_1<r_2<r_3<\cdots$ which either is finite or converges to $R$ (the
latter can only occur if $\overline{B}(p,R)=S$). If $r<R$ is
different from these values then $\partial B(p,r)$ is a finite
disjoint union of simple closed curves, each having the corner
geometry just described, and the interior angle $\alpha$ at each
corner lies in the open interval $(\pi ,2\pi )$ and can thus be
written as $\alpha =\pi+2\beta$ for some $\beta\in
(0,\frac{\pi}{2})$. Also, for such $r$ the inclusion
$B(p,r)\hookrightarrow\overline{B}(p,r)$ is a homotopy equivalence
as claimed in Remark~\ref{open-vs-closed}.

\begin{lema}\label{finitos}
If the metric is real analytic or satisfies $K\leq 0$, then the
function $\ell (r)$ from Definition~\ref{ell} is continuous for all
$r\in [0,R)$ and smooth at $r\in [0,R)\setminus\{\,
r_1,r_2,r_3\dots\,\}$.
\end{lema}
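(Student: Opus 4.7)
My plan is to express $\ell(r)$ as an integral over a moving domain in the tangent plane $T_pS$, and then extract both assertions from the regularity of the integrand together with the tameness of the integration domain. Let $c:S^1\to (0,R]$ denote the cut distance of $p$ in each direction, truncated at $R$; this function is continuous. On the open subset of $T_pS$ where $\mbox{\rm Exp}_p$ is a local diffeomorphism (i.e.\ off the tangential first conjugate locus), write the pullback metric in polar coordinates as $ds^2+\lambda(s,\theta)^2\, d\theta^2$. The first step would be to verify the representation formula
\[
\ell(r)\;=\;\int_{\{\theta\,:\, c(\theta)\ge r\}}\lambda(r,\theta)\, d\theta\,,
\]
which follows from the injectivity of $\mbox{\rm Exp}_p$ on $\{s\mathbf{v}(\theta)\,:\, 0\le s<c(\theta)\}$ together with the observation that only a measure-zero set of $\theta$'s on the $r$-circle satisfies $c(\theta)=r$, so the multiple counting of cut points contributes nothing to the integral.

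Next I would handle smoothness. Fix $r^*\in [0,R)\setminus\{r_1,r_2,\dots\}$. Each $\theta$ with $c(\theta)=r^*$ corresponds to a cut point of multiplicity $2$ which is neither middle nor conjugate to $p$; the two minimizing geodesics arriving at it make a nonzero angle, so locally the distance function from $p$ is the minimum of two smooth functions with transverse level sets. The implicit function theorem then yields smooth functions $\theta_i(r)$ describing how the endpoints of the integration domain move with $r$. Since $\lambda$ is smooth on its domain of definition, the resulting finite sum $\ell(r)=\sum_i\int_{\theta_i(r)}^{\theta_{i+1}(r)}\lambda(r,\theta)\, d\theta$ is smooth on a neighborhood of $r^*$.

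For continuity at an arbitrary $r_0\in [0,R)$, I would invoke dominated convergence. By Lemmas~\ref{finite} and~\ref{coll-finite}, together with the finiteness of vertices of multiplicity $\ge 3$ in $\mbox{\rm Cut}_p\cap\overline{B}(p,r_0)$, the set $\{\theta\,:\, c(\theta)=r_0\}$ is finite and hence of measure zero; outside this set the indicator $\mathbf 1_{\{c(\theta)\ge r\}}$ is continuous in $r$ at $r=r_0$. The integrand $\lambda(r,\theta)\,\mathbf 1_{\{c(\theta)\ge r\}}$ is dominated on a compact $r$-interval around $r_0$ by the $\theta$-integrable function $\sup_{|s-r_0|\le\varepsilon}\lambda(s,\theta)$, so dominated convergence delivers $\ell(r)\to\ell(r_0)$.

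The main obstacle would be justifying that $\{\theta\,:\, c(\theta)=r_0\}$ has measure zero and that the integral representation remains valid at radii where $\mbox{\rm Exp}_p$ fails to be a local diffeomorphism. Both issues reduce to the finiteness results already established for $\mathcal N\cap\overline{B}(p,r_0)$: the tangential cut locus can be tangent to the circle $\|\mathbf{v}\|=r_0$ only at preimages of points of $\mathcal N$ (since away from such points $c$ is smooth and transverse to the radial direction by the corner-at-multiplicity-2 analysis), and those preimages are finite in number, hence negligible both in the integral formula and in the limit argument.
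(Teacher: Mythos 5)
Your strategy — write $\ell(r)$ as $\int_{\{\theta:\,c(\theta)\ge r\}}\lambda(r,\theta)\,d\theta$, read smoothness off the implicit function theorem at non-special radii, and read continuity off dominated convergence — is essentially sound and lands on the same transversality observation the paper uses: a boundary corner at a non-special radius is not a middle point, so the tangential cut locus meets the radial circle transversally, so the arc endpoints $\theta_i(r)$ are smooth in $r$. Where you genuinely diverge from the paper is in the continuity argument. The paper proves continuity at the special radii $r_k$ by a case-by-case examination of the local pictures (Figures 4--10), checking each type of topological transition and in fact getting H\"older continuity of the corresponding contribution to $\ell$; you instead invoke dominated convergence over $S^1$, which is cleaner and avoids the figure analysis altogether. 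Both routes bottleneck at the same finiteness input, and this is the one place where your write-up is a little thin: you attribute the finiteness of $\{\theta:\,c(\theta)=r_0\}$ to Lemmas~\ref{finite} and~\ref{coll-finite} and the finiteness of higher-multiplicity vertices, but those only control the set $\mathcal N$; the directions $\theta$ with $c(\theta)=r_0$ include \emph{all} cut points at distance $r_0$, the generic ones being interior points of edges. Your tangency observation handles why these crossings are locally isolated away from $\mathcal N$-preimages, but to conclude global finiteness you should invoke (or reprove) the paper's statement that $\partial B(p,r_0)$ has only finitely many corners; in the analytic case one can also argue that an accumulation of such $\theta$'s would force the tangential cut locus to coincide with the radius-$r_0$ circle on an arc, producing a continuum of middle points and contradicting Lemma~\ref{coll-finite}. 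Once that is made explicit (and $\lambda$ is declared zero at tangential conjugate points so the integrand is defined on all of $\{c(\theta)\ge r\}$), your DCT argument goes through and gives a somewhat more economical proof of continuity than the paper's.
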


\begin{proof}
In the case of a finite sequence $r_1,\dots ,r_s$, let $I$ be one of
the following intervals
\[ (0,r_1)\; ,\; (r_1,r_2)\; ,\; \cdots \; ,\; (r_{s-1},r_s)\; ,\;
(r_s,R)\; .\] In the case of an infinite sequence, let $I$ be
$(0,r_1)$ or any interval $(r_j,r_{j+1})$. In either case the number
of connected components and the corner geometry of the boundary
$\partial B(p,r)$ do not change while $r$ ranges over~$I$. Let $N_I$
be the number of maximal smooth segments in the boundary for $r\in
I$. These segments are the exponential images of $r\cdot
C_1(r),\dots ,r\cdot C_{N_I}(r)$ where $C_1(r),\dots ,C_{N_I}(r)$
are disjoint closed circular arcs in the tangential unit circle in
$T_pS$ (the rest of the tangential circle of radius $r$ is mapped by
$\mbox{\rm Exp}_p$ into the interior of the metric ball of
radius~$r$). The endpoints of the $C_i(r)$ are functions ${\bf
v}_j(r)$, $j=1,\dots ,2N_I$, with domain~$I$. For $r\in I$ the
number $\ell (r)$ is the integral of a smooth integrand over
$C_1(r)\cup\cdots\cup C_{N_I}(r)$, hence the smoothness of $\ell
(r)$ is equivalent to the smoothness of the endpoints ${\bf v}_j(r)$
of those circular arcs. Since $r\cdot C_1(r)\cup\cdots\cup r\cdot
C_{N_I}(r)$ is disjoint with the tangential first conjugate locus,
the functions ${\bf v}_j(r)$ are smooth if and only if the boundary
corner points $\mbox{\rm Exp}_p\big(r\cdot{\bf v}_j(r)\big)$ depend
smoothly on~$r$, which we next prove to be the case.

For each $C_i(r)$ let $C'_i(r)$ be an open circular arc containing
$C_i(r)$ such that $r\cdot C'_i(r)$ is still disjoint with the
tangential first conjugate locus. Then $b_i(r)=\mbox{\rm Exp}_p\big(
r\cdot C'_i(r)\big)$ is a smooth embedded arc in $S$ such that the
boundary corner points corresponding to $C_i(r)$ are the two
intersection points defined by $b_i(r)\cap\mbox{\rm Cut}_p\,$. Since
no boundary corner point is in $\mathcal N$, the cut locus is a
smooth embedded curve near them and so $b_i(r)\cap\mbox{\rm Cut}_p$
will depend smoothly on $r$ if $b_i(r)$ meets $\mbox{\rm Cut}_p$
transversally at these two corner points. The formula for first
variation of arc length implies that if $q$ is any smooth point of
the cut locus then said locus bisects the directions of the two
minimizing geodesic segments joining $p$ to $q$. The boundary
$\partial B(p,r)$ that goes through $q$ has two corner directions at
$q$ which are the orthogonal directions to those two minimizing
geodesic segments, hence the boundary has a direction tangent to the
cut locus at $q$ if and only if $q$ is a middle point (of
multiplicity~$2$). Since for $r\in I$ no corner point of $\partial
B(p,r)$ is a middle point, the smooth segment $b_i(r)$ meets the cut
locus transversally. This implies that the boundary corner points
are smooth functions of $r$ for $r\in I$ and, as explained above,
that $\ell (r)$ is smooth in~$I$.

The continuity of $\ell (r)$ at the special values $r_1,r_2,\dots$
follows by examination of Figures \ref{figure:ojales} to
\ref{figure:T} and the analysis that we made for each of them. For
example, in Figure~\ref{figure:ojales} we see a contribution to
$\ell (r)$ which has negative derivative for $r<r_k$, has derivative
equal to $-\infty$ at $r=r_k$, and equals $0$ for $r\geq r_k$.
Moreover this defines a H\"older continuous function of $r$ near
$r=r_k$ because we proved that the two fronts whose motion gives
rise to Figure~\ref{figure:ojales} either have different curvatures
at the special point (one of them infinite) or have only a finite
order contact at such point. Similar arguments apply to the other
Figures.
\end{proof}

We shall now give a formula for $\ell'(r)$. Let $k_g$ denote the
geodesic curvature of the boundary, taken with positive sign where
the boundary is curving towards the metric ball and with negative
sign where the boundary is curving away from the metric ball. The
integral $\int_{\partial B(p,r)}k_g\, ds$ is the contribution to the
derivative $\ell '(r)$ by the smooth segments of the boundary. To
determine the contribution from the corners it is sufficient to
consider the case of two straight segments lying on the Euclidean
plane and making an angle $\alpha =\pi+2\beta$. We see in
Figure~\ref{figure:a} that this corner contributes $\,
-2\tan\beta\,$ to $\ell'(r)$. The formula for the derivative of
$\ell (r)$ is:
\begin{equation}\label{ele-prima}
\ell'(r)\; =\; \sum_i -2\tan\beta_i +\int_{\partial B(p,r)}k_g\,
ds\quad ,\quad \mbox{\rm for }\; r\notin\{\,
0,r_1,r_2,r_3\dots\,\}\; ,
\end{equation}
where the index $i$ runs over the corners of the boundary
$\partial B(p,r)$ and $\alpha_i=\pi+2\beta_i$ are the respective
interior angles at those corners.

\begin{figure}[h]
\includegraphics[scale=0.5]{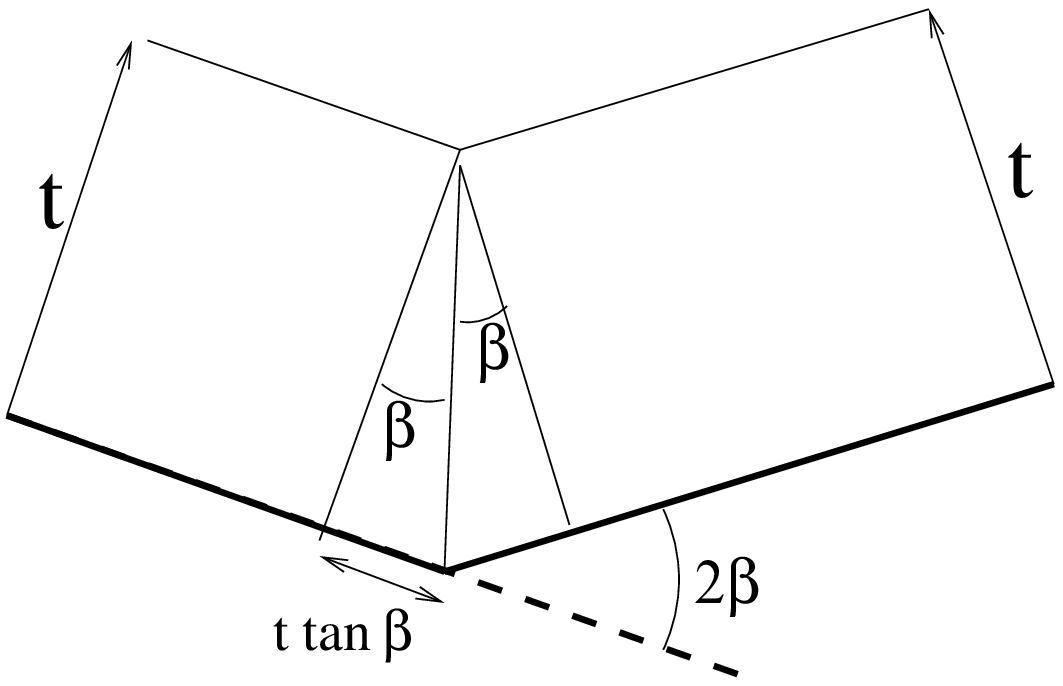}
 \caption{}
 \label{figure:a}
\end{figure}

\begin{definicion}\label{chi}
A surface is of \emph{finite type} if its fundamental group is
finitely generated.

Let $S$ be a connected surface of finite type, either non-compact or
compact with non-empty boundary; for such surfaces the
\emph{Euler-Poincar\'e characteristic} is the number $\chi
(S)=1-\mbox{\rm rank } H_1(S)$. Assume further that $\partial S$ is
either empty or a disjoint union of simple closed curves; then $\chi
(S)$ coincides with $2-2g-n$, where $g$ is the genus of $S$ and $n$
is the sum of the number of connected components of $\partial S$
plus the number of ends of $S$ that are homeomorphic with~$S^1\times
[0,\infty )$ (with the curve corresponding to $S^1\times\{ 0\}$
lying interior to~$S$).

\end{definicion}

\vspace{3mm}

\begin{proof}[Proof of Theorem~\ref{t:balls}.]
Define the integer-valued function $\chi (r)$ as follows: \[ \chi
(r)\; =\; \chi\big(\,  B(p,r)\,\big)\qquad \mbox{\rm for}\qquad 0<
r< r_0+\frac{1}{k}\; .\] We have three reasons for assuming that
$\overline{B}\big(\, p\, ,\, r_0+\frac{c}{k}\,\big)$ meets the
conditions of Remark~\ref{con-borde}. The first reason is that
formula~(\ref{ele-prima}) can then be used for
$0<r<r_0+\frac{c}{k}$. The second reason is that, once the balls
$B(p,r)$ have non-empty boundary for that range of values of $r$, we
can apply Definition~\ref{chi} to these balls. The third is to
ensure that the group $\pi_1\big(\, B(p,r)\, ,\, p\,\big)$ is free
in $n(r)$ generators; then $H_1\big(\, B(p,r)\,\big)$, being the
abelianization of $\pi_1\big(\, B(p,r)\, ,\, p\,\big)$, is isomorphic
with ${\mathbb Z}^{n(r)}$ and we have:
\begin{equation}\label{ji}
n(r)\; =\; \mbox{\rm rank } H_1\big(\, B(p,r)\,\big)\; =\; 1-\chi
(r)\, .
\end{equation}
In view of this, we seek an estimate for $1-\chi (r)$.

For $r\notin\{\, 0,r_1,r_2,r_3\dots\,\}$ we can use the Gauss-Bonnet
formula:
\[ 2\pi\,\chi (r) \; =\; \int_{B(p,r)}K\, d\,\hbox{area}
+\sum_i (\pi -\alpha_i)+\int_{\partial B(p,r)}k_g\, ds\, ,\] which
we rewrite as follows: \begin{equation}\label{Gau}  -\sum_i
2\beta_i+\int_{\partial B(p,r)} k_g\, ds + \int_{B(p,r)}K\,
d\,\hbox{area} \; =\; 2\pi\,\chi (r)\, .\end{equation} From
$\beta_i\in\big( 0,\frac{\pi}{2}\big)$ we infer
$\beta_i<\tan\beta_i$, which together with formulas
(\ref{ele-prima}) and (\ref{Gau}) leads to
\begin{equation}\label{ji-deriv}
\ell'(r)+\int_{B(p,r)}K\, d\,\hbox{area}\,\leq\; 2\pi\,\chi (r)
\quad ,\quad \mbox{\rm for }\; r\notin\{\, 0,r_1,r_2,r_3\dots\,\}\,
,
\end{equation} the inequality being strict whenever $\partial
B(p,r)$ has at least one corner. By Remark~\ref{corners}, this is
the case whenever $B(p,r)$ contains a cut point of~$p\,$.

Define now the function:
\[ a(r):\, =\, \mbox{\rm area}\,\big(\, B(p,r)\,\big)\; ,\]
which satisfies $a'(r)=\ell (r)$ for all $r\in \big[\, 0\, ,\,
r_0+\frac{c}{k}\,\big)$, thus:
\[ a(r)\in{\mathcal C}^1\big[\, 0\, ,\,
r_0+\mbox{$\frac{c}{k}$}\,\big)
\qquad\mbox{\rm and}\qquad a(r)\in{\mathcal C}^\infty\left(\,\big[\,
0\, ,\, r_0+\mbox{$\frac{c}{k}$}\,\big)\setminus\{\,
r_1,r_2,r_3\dots\,\}\,\right)\, .\]

Introduce now the hypothesis $K\geq -k^2$. The first consequence is
that formula~\ref{ji-deriv} yields the following differential
inequality:
\begin{equation}\label{fundamental}
a''(r)-k^2a(r)\; \leq 2\pi\,\chi (r) \quad ,\quad \mbox{\rm for }\;
r\notin\{\, r_1,r_2,r_3\dots\,\}\; ,
\end{equation}
which is strict whenever $B(r,p)$ contains a cut point of $p\,$. The
second consequence is the well-known bound (\ref{ball-length}) for
boundary length in terms of the corresponding length in a hyperbolic
plane with curvature~$-k^2$. The third consequence is the bound for
area:
\begin{equation}\label{ball-area}
a (r) \;\leq\; \frac{2\pi}{k^2}\,\big(\cosh (kr)-1\big)\,  ,
\end{equation}
obtained by integrating (\ref{ball-length}).

\begin{lema}\label{comparison}
Let $u(r),\overline{u}(r)$ be functions on an interval $r_0\leq r<
R$, smooth in the complement of a discrete set $Z\subset [r_0,R)$,
which satisfy:
\[ \left\{\begin{array}{ll}u(r) \in {\mathcal C}^1[r_0,R) & \\
u''(r)-k^2 u(r) = f(r) & r\notin Z\end{array}\right. \qquad\quad
 \left\{\begin{array}{ll}\overline{u}(r) \in {\mathcal C}^1[r_0,R) & \\
 \overline{u}''(r)-k^2 \overline{u}(r) = \overline{f}(r) & r\notin Z
 \end{array}\right. \] If the following inequalities hold
\begin{eqnarray}\label{ineq-1} f(r) &\leq & \overline{f}(r)
\;\;\mbox{\rm for all }\; r\notin F\, ,\\
\label{ineq-2} u(r_0) &\leq & \overline{u}(r_0)\, ,\\
\label{ineq-3} u'(r_0)-k\, u(r_0) &\leq &
\overline{u}'(r_0)-k\,\overline{u}(r_0)\, ,
\end{eqnarray}
then $\; u\leq\overline{u}\;$ and $\; u'-k
u\leq\overline{u}'-k\overline{u}\;$ (hence also
$u'\leq\overline{u}'$) everywhere on~$[r_0,R)$.

\end{lema}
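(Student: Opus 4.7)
My plan is to factor the operator $D^2-k^2=(D+k)(D-k)$ and reduce the second-order comparison to two successive first-order comparisons. Set $v=\overline{u}-u$ and $W=v'-kv$. By hypothesis $v\in\mathcal{C}^1[r_0,R)$ so $W$ is continuous on the whole interval, and off the discrete set $Z$ it is smooth with
\[
W'+kW\;=\;v''-k^2v\;=\;\overline{f}(r)-f(r)\;\ge\;0.
\]
Thus $(e^{kr}W)'=e^{kr}(W'+kW)\ge 0$ on $[r_0,R)\setminus Z$.

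The first-order conclusion then follows from the fact that a continuous function on $[r_0,R)$ with nonnegative derivative off a discrete set is nondecreasing: indeed, on each open component of $[r_0,R)\setminus Z$ the product $e^{kr}W$ is smooth and nondecreasing, and continuity across the points of $Z$ glues these pieces together. Using hypothesis (\ref{ineq-3}), $W(r_0)=\bigl(\overline{u}'(r_0)-k\overline{u}(r_0)\bigr)-\bigl(u'(r_0)-ku(r_0)\bigr)\ge 0$, hence $e^{kr}W(r)\ge e^{kr_0}W(r_0)\ge 0$ for all $r\in[r_0,R)$, which is exactly $\overline{u}'-k\overline{u}\ge u'-ku$.

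Having obtained $v'-kv=W\ge 0$ everywhere on $[r_0,R)$, the same trick is applied once more in the opposite sign: $(e^{-kr}v)'=e^{-kr}(v'-kv)\ge 0$ off $Z$, and $v$ is continuous, so $e^{-kr}v$ is nondecreasing on $[r_0,R)$. Hypothesis (\ref{ineq-2}) gives $v(r_0)\ge 0$, and therefore $v(r)\ge e^{k(r-r_0)}v(r_0)\ge 0$, i.e.\ $u\le\overline{u}$ throughout. Combining this with $W\ge 0$ yields $u'\le\overline{u}'$ as well.

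The only point requiring care is the discrete exceptional set $Z$ across which $u$ and $\overline{u}$ are only assumed $\mathcal{C}^1$; but this is handled cleanly because the two auxiliary functions $e^{kr}W$ and $e^{-kr}v$ are continuous on all of $[r_0,R)$ and piecewise smooth with nonnegative derivative, so a direct application of the fundamental theorem of calculus on each smooth interval, together with continuity at the points of $Z$, suffices to upgrade "nondecreasing off $Z$" to "nondecreasing on $[r_0,R)$".
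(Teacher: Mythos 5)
Your proof is correct and is essentially the same argument as the paper's: both factor the operator $D^2-k^2$ into two first-order factors and run two successive integrating-factor comparisons using \eqref{ineq-3}, \eqref{ineq-1} for the first step and \eqref{ineq-2} for the second. The only cosmetic difference is that you work directly with the difference $v=\overline{u}-u$ and the auxiliary function $W=v'-kv$, whereas the paper makes the ansatz $u=e^{kr}c(r)$ and compares $c'$ with $\overline{c}'$; unwinding the substitution shows $e^{2kr}c'(r)=e^{kr}\bigl(u'(r)-ku(r)\bigr)$, which is exactly your $e^{kr}W$ when applied to the difference.
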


\begin{proof}
Make the ansatz $u(r)\equiv e^{kr}c(r)$. Then the conditions imposed
on $u(r)$ are equivalent to the following:
\[ c(r)\in{\mathcal C}^1[r_0,R)\qquad\mbox{\rm and}\qquad
r\notin Z\Longrightarrow\frac{d}{dr}\big(\, e^{2kr}c'(r)\,\big)\;
=\; e^{kr}f(r) \, .\] The function $c'(r)\equiv e^{-kr}\big(\,
u'(r)-ku(r)\,\big)$ is continuous on $[r_0,R)$ and smooth for
$r\notin Z$. Since $Z$ is discrete, it follows that $c'(r)$ is given
at every $r\in [r_0,R)$ by the formula:
\[ c'(r)\; =\; e^{-2kr}\,\left( e^{2kr_0}c'(r_0)+\int_{r_0}^r
e^{kt}f(t)\, dt\right) .\] We have the analogous formula for the
derivative of the function $\overline{c}(r)$ given by
$\overline{u}(r)\equiv e^{kr}\,\overline{c}(r)$. Then, in view of
(\ref{ineq-1}), the inequality $c'(r)\leq\overline{c}'(r)$ holds on
all of $[r_0,R)$ if it holds at $r=r_0$, which is the case thanks
to~(\ref{ineq-3}). Now $c(r_0)\leq\overline{c}(r_0)$ is equivalent
to~(\ref{ineq-2}), and $c(r)\leq\overline{c}(r)$ follows by
integration.

The claimed inequalities follow by multiplying
$c(r)\leq\overline{c}(r)$ and $c'(r)\leq\overline{c}'(r)$
by~$e^{kr}$.
\end{proof}

\begin{obs}
The above proof gives $u'(r)<\overline{u}'(r)$ if we have
$f<\overline{f}$ in some non-trivial interval contained in
$[r_0,r]$.
\end{obs}

Consider the interval $I_0=\big[\, r_0\, ,\, r_0+\frac{c}{k}\,\big)$
and the constant:
\[ \chi_0:=\max_{r\in I_0}\chi (r)=1-\min_{r\in I_0}n(r)\, .\]
There is an $r'\in\big(\, r_0\, ,\, r_0+\frac{c}{k}\,\big)$ such
that $\chi_0$ equals the Euler characteristic of both $B(p,r')$ and
$\overline{B}(p,r')$. Inequality~(\ref{top-estimate}) is equivalent
to the inequality:
\begin{equation}\label{conclusion}
1-\chi_0\; <\: \frac{1}{\sinh c}\left( \sinh (kr_0+c)-
\frac{k\,\ell\big(\, r_0+\frac{c}{k}\,\big)}{2\pi}\right) .
\end{equation} Another property that the constant $\chi_0$ has is
that if we define the following two functions on $I_0$:
\begin{eqnarray*}
f(r) &=& a''(r)-k^2 a(r) \, ,\\ \overline{a}(r) &=& \left(\,
a(r_0)+\frac{2\pi\chi_0}{k^2}\,\right)\cosh\big(\, k(r-r_0)\,\big)
+\frac{1}{k}\,\ell (r_0)\sinh\big(\,
k(r-r_0)\,\big)-\frac{2\pi\chi_0}{k^2}\, ,
\end{eqnarray*}
then $\overline{a}$ is everywhere smooth, and by (\ref{fundamental})
it satisfies:
\[
\overline{a}''(r)-k^2\overline{a}(r)\;  =\; 2\pi\chi_0\;\geq\;
f(r)\qquad\mbox{for all }\; r\in I_0\setminus\{ r_1,r_2,\dots \}\, ,
\] the inequality being strict if $B(p,r)$ contains some cut point
of~$p\,$. We have adjusted $\overline{a}$ to satisfy
$\overline{a}(r_0)=a(r_0)\,$ and $\,\overline{a}'(r_0)=a'(r_0)$;
then Lemma~\ref{comparison} tells us that:
\begin{equation}\label{desig-1} \ell (r)\; =\; a'(r)\;\leq\;
\overline{a}'(r)\qquad \mbox{\rm for all }\; r\in I_0\,
,\end{equation} with strict inequality unless $B(p,r)$ contains no
cut point of~$p\,$.

Computing $\overline{a}'(r)$ from the explicit formula that defines
$\overline{a}$, and using (\ref{ball-length}) and (\ref{ball-area}),
one finds:
\begin{equation}\label{desig-2}
 \overline{a}'(r)\;\leq\;\frac{2\pi}{k}\left[\,\vphantom{\frac{a}{a}}\big(\,
\cosh (kr_0)-1+\chi_0\,\big)\sinh\big(\, k(r-r_0)\,\big)+\sinh
(kr_0)\cosh\big(\, k(r-r_0)\,\big)\, \right]\;\;\mbox{\rm for all
}r\in I_0\, .\end{equation}

Combining (\ref{desig-1}) and (\ref{desig-2}), we get for all $r\in
I_0$:
\[ 1-\chi_0 \; \leq\; \frac{1}{\sinh \big(\, k(r-r_0)\,\big)}\left(
\sinh (kr) -\frac{k\,\ell (r)}{2\pi}\right)  .\] Taking the limit as
$r\to r_0+\frac{c}{k}$, and using the following fact:
\[ \lim_{r\to
r_0+(c/k)}\ell (r)\;\geq\; \ell\big(\, r_0+\frac{c}{k}\,\big)\, ,
\]
we deduce (\ref{conclusion}).

Suppose (\ref{conclusion}) is an equality. Since $\chi_0=\chi\big(\,
B(p,r')\,\big)$, the ball $B(p,r')$ must then be disjoint from the
cut locus of $p\,$, hence diffeomorphic to a disk, and so $n(r')=0$.
But then (\ref{ball-length}) is an equality at $r= r_0+\frac{c}{k}$,
and by Remark~\ref{corners} the ball $B\big(\, p\, ,\,
r_0+\frac{c}{k}\,\big)$ is disjoint from the cut locus of $p\,$ and
has $K\equiv -k^2$.
\end{proof}

\section{Background on Gromov spaces.}

\spb

In our study of hyperbolic Gromov spaces we use the notations of
\cite{GH}. We give now the basic facts about these spaces. We refer
to \cite{GH} for more background and further results.

\spb

\begin{definicion}\label{hype}
Let us fix a point $w$ in a metric space $(X,d)$. Define the
\emph{Gromov product} of $x,y\in X$ with respect to the point $w$ as
$$
(x|y)_w:=\frac12\,\big( d(x,w)+d(y,w)-d(x,y) \big)\ge 0\,.
$$
We say that the metric space $(X,d)$ is $\d$-\emph{hyperbolic}
$(\d\ge 0)$ if
$$
(x|z)_w\ge\min\big\{ (x|y)_w, (y|z)_w \big\}-\d\,,
$$
for every $x,y,z,w\in X$. When we do not want to specify the value
of $\delta$, we say that $X$ is \emph{Gromov hyperbolic}.
\end{definicion}

\spb

It is convenient to remark that this definition of hyperbolicity is
not universally accepted, since sometimes the word `hyperbolic'
refers to negative curvature or to the existence of a Green
function. However, in this paper we only use the word {\it
hyperbolic} in the sense of Definition~\ref{hype}.

\spb

\noindent{\it Examples:}
\begin{list}{}{}
\item[(1)] Every bounded metric space $X$ is $(\diam X)$-hyperbolic.

\item[(2)] Every complete simply connected Riemannian manifold with
sectional curvature bounded from above by $-k^2$, with $k>0$, is
hyperbolic.

\item[(3)] Every tree with edges of arbitrary length is $0$-hyperbolic.
\end{list}
We refer the reader to \cite{BHK}, \cite{GH} and \cite{CDP} for
further examples.

\begin{definicion}
A metric space $X$ is a \emph{ geodesic metric space } if any two
points $x,y\in X$ can be joined by a path whose length equals
$d(x,y)$.
\end{definicion}

In general metric spaces, the \emph{length} $L(\g )$ of a path $\g
:[a,b]\to X$ is defined as $\,\sup\sum_{i=1}^n
d(\g(t_{i-1}),\g(t_{i})),$ taken over all partitions $\,
a=t_0<t_1<\cdots <t_n=b$.

\begin{definicion}
\label{def:Rips} In a general metric space $X$ a \emph{\/metric
geodesic\/} is a path $\g (t)$ such that
$d(\g(t),\g(s))=L(\g|_{[t,s]})=|t-s|$ for every $s,t\in [a,b]$, i.e.
$\g$ is minimizing and parametrized by arclength. We relax this
condition for a closed path: it only has to minimize length in its
free homotopy class.

If $T$ is a \emph{metric geodesic triangle} (i.e. its sides
$J_1,J_2,J_3,$ are metric geodesics) we say that $T$ is
$\d$-\emph{thin} if for every $x\in J_i$ we have that
$d(x,\cup_{j\neq i}J_{j})\le \d$. The space $X$ is $\d$-\emph{thin}
(or satisfies the \emph{Rips condition} with constant $\d$) if every
geodesic triangle in $X$ is $\d$-thin.
\end{definicion}

A basic result is that hyperbolicity is equivalent to the Rips
condition:

\begin{teo}
\emph{(\cite[p. 41]{GH})} Let us consider a geodesic metric space
$X.$

$(1)$ If $X$ is $\d$-hyperbolic, then it is $4\d$-thin.

$(2)$ If $X$ is $\d$-thin, then it is $4\d$-hyperbolic.
\end{teo}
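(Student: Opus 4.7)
The plan rests on the observation that, in a geodesic space, if $p$ lies on a geodesic from $x$ to $y$ then $d(x,p)+d(p,y)=d(x,y)$, so $(x|y)_p=0$. The Gromov product therefore detects how far a basepoint deviates from geodesics, and this is the bridge between Definition \ref{hype} and the Rips thin-triangle condition.

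For part (1), fix a metric geodesic triangle with vertices $x,y,z$ and sides $[xy],[yz],[xz]$, and fix $p\in[xy]$ at distance $t$ from $x$. I would construct a comparison point $q$ on $[xz]\cup[yz]$ obtained by transferring $p$ along the tripod model: if $t\le\a:=(y|z)_x$ take $q\in[xz]$ with $d(x,q)=t$; if $t>\a$ take $q\in[yz]$ with $d(y,q)=d(x,y)-t$. The target is $d(p,q)\le 4\d$. The key inputs are $(x|y)_p=0$ (since $p$ lies on $[xy]$) and, in the case $t\le\a$, that $q$ lies on the initial subarc of $[xz]$ of length $t$, giving $(x|q)_p=0$ as well. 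A short iteration of the Gromov four-point inequality with basepoints $p$ and $q$, using these vanishing products, peels off one $\d$ at each invocation and yields $d(p,q)\le 4\d$; the case $t>\a$ is symmetric.

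For part (2) I would verify the Gromov four-point inequality directly from thinness. Fix $x,y,z,w\in X$ and assume after relabelling that $(x|y)_w\le(y|z)_w$; the target then becomes $(x|z)_w\ge(x|y)_w-4\d$. Fix geodesics $[wx],[wy],[wz]$ and $[xz]$, and consider the geodesic triangles $T_1=\D(w,x,y)$, $T_2=\D(w,y,z)$ and $T_3=\D(w,x,z)$. Pick $m\in[wy]$ at distance $(x|y)_w$ from $w$; by $\d$-thinness of $T_1$ there is $m_1\in[wx]\cup[xy]$ within $\d$ of $m$, and comparing $d(w,m_1)$ with $(x|y)_w$ forces $m_1\in[wx]$. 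Applying $\d$-thinness of $T_3$ to $m_1$, and if necessary $\d$-thinness of $T_2$ to the result, I would produce a point on $[wz]$ within $4\d$ of $m$. Expanding the definition of the Gromov product using this chain of near-equalities yields the desired inequality.

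The main obstacle is the constant bookkeeping in part (2): the test point must be routed through up to three triangles without the errors compounding beyond $4\d$, and at each step one must verify that the chosen comparison point lies on the intended side rather than on another side of the triangle under consideration. Part (1) is conceptually cleaner once the tripod picture is in place, but the iteration of the Gromov four-point inequality must still be carried out twice, so the accumulation of constants has to be controlled there as well.
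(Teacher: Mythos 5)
The paper offers no proof of this statement: it is quoted verbatim from Ghys--de~la~Harpe \cite[p.~41]{GH}, so there is nothing internal to compare your argument against. Evaluating your plan on its own, the overall architecture (compare the triangle with its tripod for~(1), trace a test point through auxiliary triangles for~(2)) is the standard one, but both parts contain concrete errors that would derail an execution.

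In part~(1) you assert $(x|q)_p=0$. This is false: with $d(x,p)=d(x,q)=t$ one gets $(x|q)_p=\tfrac12\bigl(d(x,p)+d(q,p)-d(x,q)\bigr)=\tfrac12\,d(p,q)$, which is exactly the quantity you are trying to bound. More fundamentally, the four-point inequality produces \emph{lower} bounds on Gromov products, and all products with basepoint $p$ or $q$ are already $\ge 0$; so iterating the inequality at $p$ and $q$ can never yield an \emph{upper} bound on $d(p,q)$. The basepoint that works is $x$: since $p\in[xy]$ and $q\in[xz]$ both at distance $t$ from $x$, one has $(p|y)_x=(q|z)_x=t$, and two applications of the four-point inequality at $x$ (via $z$, then via $y$) give $(p|q)_x\ge t-2\d$, whence $d(p,q)=2\bigl(t-(p|q)_x\bigr)\le 4\d$.

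In part~(2) the step ``comparing $d(w,m_1)$ with $(x|y)_w$ forces $m_1\in[wx]$'' is not justified. Thinness only gives a point $m_1\in[wx]\cup[xy]$ with $d(m,m_1)\le\d$; that point may perfectly well lie on $[xy]$, and the inequalities $d(w,m_1)\ge (x|y)_w-\d$ and $d(w,m_1)\le (x|y)_w+\d$ that follow do not rule this out. One must either handle the case $m_1\in[xy]$ separately or perturb the choice of $m$ slightly, paying an extra $\d$. The fallback routing also breaks: if $m_1\in[wx]$ and thinness of $T_3=\D(w,x,z)$ lands you at $m_2\in[xz]$, then $m_2$ does not lie on any side of $T_2=\D(w,y,z)$, so ``applying $\d$-thinness of $T_2$ to the result'' is not an available move. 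A correct route has to track which side each comparison point lands on and argue in each case, and this bookkeeping is precisely where the proof of this implication is delicate.
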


From the next Section onwards, all spaces will be $2$-dimensional
Riemannian manifolds (with or without boundary) and length will be
defined as the obvious integral. Given such a surface $S$, its
distance function $d_S$ is defined by minimizing length of paths in
$S$. This turns $S$ into a geodesic metric space.

\begin{definicion}
\label{d:innermetric} For a sub-surface $X\subset S$ we have two
choices:
\begin{list}{}{}
\item[--] The \emph{extrinsic distance}, which is just $d_S$
acting only on pairs $(x,y)\in X\times X$. \item[--] The
\emph{intrinsic distance} $d_S|_X$, defined by minimizing
$d_S$-length of paths contained in~$X$. When there is no risk of
confusion we shall denote it~$d_X$.
\end{list} Likewise we have the \emph{extrinsic diameter}
$\,\diam_S(X)$ and the \emph{intrinsic diameter} $\,\diam_X(X)$.
\end{definicion}

Obviously $d_S\le d_X$ and $\,\diam_S(X)\le\diam_X(X)$. Notice also
that $X$ is always a geodesic metric space with the intrinsic
distance, not always with the extrinsic one.

\spb

Next we introduce a useful notion and use it to state
Theorem~\ref{t:treedecomp}, which will be important for the proof of
Theorem~\ref{t:clasef} below and was also used in the proofs of two
results, Theorems \ref{t:clases} and \ref{t:rsn}, which are quotes
from the previous work~\cite{PRT3}.

\begin{definicion}
\label{d:treedecomp} Let $(X,d)$ be a metric space, and let
$X=\cup_nX_n$ where $\{X_n\}_n$ is a family of connected geodesic
metric spaces such that $\eta_{nm}:=X_n\cap X_m$ are compact sets.
Further, assume that for any $n\neq m$ with
$\eta_{nm}\neq\varnothing$ the set $X\setminus\eta_{nm}$ is not
connected, and that the connected components of
$X\setminus\eta_{nm}$ containing $X_n\setminus\eta_{nm}$ are all
different from those containing $X_m\setminus\eta_{nm}$. We say that
$\{X_n\}_n$ is a $k$-\emph{tree decomposition} of $X$ if for each
$n$ we have $\sum_{m}\diam_{X_n} (\eta_{nm})\le k$.
\end{definicion}

If we define a graph with one vertex $v_n$ for each piece $X_n$, and
one edge $e_{nm}$ joining $v_n$ to $v_m$ if
$\eta_{nm}\neq\varnothing$, we obtain a tree. Hence the name.

\begin{teo}
\label{t:treedecomp} \emph{(Compare \cite[Theorem 2.9]{PRT1})} Let
us consider a metric space $X$ and a family of geodesic metric
spaces $\{X_n\}_n \subseteq X$ which is a $k$-tree
decomposition of $X$. Then $X$ is $\d$-hyperbolic if and only if
there exists a constant $c$ such that $X_n$ is $c$-hyperbolic for
every $n$. Furthermore, $\d$ (respectively $c$) is a universal
constant which only depends on $k$ and $c$ (respectively
$k$ and $\d$).
\end{teo}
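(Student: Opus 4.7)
The plan is to proceed via two complementary implications, both grounded on a quasi-isometric comparison between the intrinsic metric $d_{X_n}$ and the ambient metric $d$ restricted to~$X_n$.

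\textbf{Step 1 (Comparison).} First I would establish that for any $n$ and any $x,y\in X_n$,
$$
d(x,y)\;\le\; d_{X_n}(x,y)\;\le\; d(x,y)+k\,.
$$
The left inequality is immediate. For the right, take an almost-minimizing path $\g$ from $x$ to $y$ in $X$ and decompose it into maximal sub-arcs according to the piece $X_m$ in which each sub-arc lies. Whenever $\g$ leaves $X_n$ by entering some $X_m$ with $m\neq n$, the tree-decomposition hypothesis (removal of $\eta_{nm}$ disconnects $X$, with $X_n\setminus\eta_{nm}$ and $X_m\setminus\eta_{nm}$ in different components) forces $\g$ to re-enter $X_n$ through the \emph{same} set $\eta_{nm}$. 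Hence every such excursion can be replaced by a path inside $X_n$ of length at most $\diam_{X_n}(\eta_{nm})$, joining the entry and exit points. Summing the replacement costs over all visited pieces and applying $\sum_m\diam_{X_n}(\eta_{nm})\le k$ produces a path in $X_n$ of length at most $d(x,y)+k$.

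\textbf{Step 2 (Direct implication: $X$ $\d$-hyperbolic $\Rightarrow$ each $X_n$ is $c$-hyperbolic).} Given a metric geodesic triangle $T$ in $X_n$ with sides $\g_1,\g_2,\g_3$, Step~1 says that each $\g_i$ is a $(1,k)$-quasi-geodesic in $X$. By the stability of quasi-geodesics in $\d$-hyperbolic spaces (the Morse lemma, \cite[Ch.~5]{GH}), each $\g_i$ lies within some $H=H(\d,k)$ of a genuine $X$-geodesic $\g_i'$. The triangle formed by the $\g_i'$ is $4\d$-thin in $X$, and pushing the estimate back via $d_{X_n}\le d+k$ one last time shows that $T$ is $c$-thin in $X_n$ for a constant $c=c(\d,k)$.

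\textbf{Step 3 (Converse: uniform $c$-hyperbolicity of the pieces $\Rightarrow$ $X$ is $\d$-hyperbolic).} This is where I expect the main obstacle. Given a metric geodesic triangle $T=[xy]\cup[yz]\cup[zx]$ in $X$ and a point $p\in [xy]$, I have to bound $d\big(p,\,[yz]\cup[zx]\big)$ in terms of $c$ and $k$. Let $X_{n_0}$ be a piece containing $p$ and let $a,b$ be the entry/exit points of $[xy]$ into $X_{n_0}$ on either side of $p$. The tree property of the decomposition induces an order on the pieces traversed by each side of $T$: the set of visited pieces is a combinatorial subtree and each side of $T$ traces a path within that subtree. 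A combinatorial argument on this tree locates a piece $X_m$ that all three sides must visit (or is ``between'' them); the segments of $[yz]$ and $[zx]$ inside $X_m$ can be closed up, via Step~1, into a metric geodesic triangle of $X_m$ that shares a portion of the analogous segment of $[xy]$. Applying $c$-thinness of this triangle in $X_m$, then converting back to $d$-distances through Step~1, bounds $d(p,\,[yz]\cup[zx])$ by a constant $\d=\d(c,k)$.

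The main obstacle is Step~3: I have to organize the combinatorial-tree argument so that only the hyperbolicity constants of the pieces actually hosting the relevant portions of the triangle contribute, and so that the accumulated ``transit'' costs are controlled by $k$ alone rather than by the combinatorial depth of the subtree. This parallels the argument of \cite[Theorem~2.9]{PRT1}, from which the present statement is adapted.
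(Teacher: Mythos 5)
The paper does not actually prove Theorem~\ref{t:treedecomp}; it is quoted from \cite[Theorem~2.9]{PRT1} and used as a black box, so there is no internal proof to compare against. Evaluating your sketch on its own merits: Step~1 is correct and is indeed the right starting point. Note that a single adjacent piece $X_m$ may be visited several times, so you should replace the whole stretch of $\g$ from the \emph{first} exit through $\eta_{nm}$ to the \emph{last} re-entry through $\eta_{nm}$ by one arc of length at most $\diam_{X_n}(\eta_{nm})$; doing this for each $m$ in turn (each replacement may swallow later ones, which is harmless for an upper bound) gives $d_{X_n}\le d+k$. That the entry and exit of each excursion lie in the \emph{same} $\eta_{nm}$ is exactly what the separating hypothesis in Definition~\ref{d:treedecomp} guarantees, and you are right to rely on it. Step~2 is a correct application of quasi-geodesic stability.

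Step~3 is where the real content lies, and it is not a proof, it is a statement of intent. You declare that ``a combinatorial argument on this tree locates a piece $X_m$ that all three sides must visit (or is `between' them),'' but this is precisely the theorem's difficulty, and your sketch does not carry it out. There is also an internal inconsistency: you fix $X_{n_0}\ni p$ at the outset, then argue thinness of a triangle inside a different piece $X_m$, without explaining how the conclusion in $X_m$ controls the distance from $p\in X_{n_0}$ to the other two sides. A complete argument must handle the case in which $p$ is near a vertex of $T$ (where the three sides need not share a common piece), the case in which $[xy]$ traverses many pieces that neither $[yz]$ nor $[zx]$ enters, and the conversion of the restrictions $[yz]\cap X_m$, $[zx]\cap X_m$ into endpoints of an honest geodesic triangle in $X_m$ (Step~1 only gives quasi-geodesics, so one must also invoke stability inside $X_m$). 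As written, Step~3 is a plan with the hardest part left as an obstacle you expect to meet, so the proposal does not yet establish the converse direction.
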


\bpb

\section{Definitions and previous results on Riemann surfaces.}

\spb

In the following, Gaussian curvature is the constant $-1$. In this
Section we collect some definitions and facts concerning Riemann
surfaces which will be referred to afterwards.

\spb

An \emph{open non-exceptional} Riemann surface $S$ is the following
two things:
\begin{enumerate}
\item Conformally, it is a Riemann
surface whose universal covering space is the unit disk
$\DD=\{z\in\CC:\; |z|<1\}$. \item As a Riemannian manifold, it is
endowed with its own Poincar\'e metric, i.e. the metric obtained by
projecting the Poincar\'e metric of the unit disk $ds =2
|dz|/(1-|z|^2)$ down to $S$ by the covering map.
\end{enumerate}

\begin{obs}
{\rm (1) Note that, with this definition, every compact
non-exceptional Riemann surface without border is open.

(2) There are infinitely many metrics in the conformal class of $S$
with constant curvature $-1$, but the Poincar\'e metric is the only
complete one. In fact, a surface with a Riemann metric satisfying
$K\equiv -1$ has the unit disk as universal cover if and only if the
Riemann metric is complete.

(3) The only Riemann surfaces which are left out are the sphere, the
plane, the punctured plane and the tori. It is easy to study the
hyperbolicity of these particular cases.}
\end{obs}

\spb

A \emph{bordered non-exceptional Riemann surface} is a connected
$2$-dimensional Riemannian manifold  $S$ with boundary, subject to
the following restrictions:
\begin{enumerate}
\item It is the complement $S=R\setminus U$ of an open set $U$ in an
open non-exceptional Riemann surface $R$, and the Riemann metric on
$S$ is the one induced from the Poincar\'e metric of~$R$.
\item Mild border regularity conditions: the border of $S$ is locally
Lipschitz and any ball in $R$ intersects at most a finite number of
connected components of $U\!$.
\end{enumerate}
We say that $R$ contains $S$ isometrically. Since $S$ is a closed
subset of $R$, it is geodesically complete.

\begin{obs}{\rm If instead
of removing an open set we delete a closed set $E$ from an open
non-exceptional Riemann surface $R$, then we consider $R\setminus E$
also as an open non-exceptional Riemann surface, with its own
Poincar\'e metric which has also constant curvature $-1$ but is
longer than the (incomplete) Riemannian metric induced from~$R$.}
\end{obs}

Not every $2$-dimensional Riemannian manifold $S$ with $K\equiv -1$
embeds isometrically into an open non-exceptional Riemann surface.
For such isometric embedding to exist, the following necessary
condition must be satisfied: if $\widetilde{S}$ is the universal
cover of $S$, then any Riemann metric $g$ on $S$ with $K\equiv -1$
induces a local isometry $\Phi :\widetilde{S}\to{\mathbb D}$ that is
unique up to isometries of $\mathbb D$, and if $g$ is induced by an
embedding into an open non-exceptional Riemann surface then $\Phi$
must be injective. Let $S$ be an abstract closed disk, and take any
non-injective immersion $f:S\to{\mathbb D}$; then $f$ pulls the
Poincar\'e metric back to a metric $g$ on $S$ that has $K\equiv -1$;
but $(S,g)$ does not embed isometrically into any open
non-exceptional Riemann surface, because for this $g$ one has $\Phi
=f$. On the other hand $\,\inte S$ is an open non-exceptional
Riemann surface (with the conformal structure defined by $g$) and
the corresponding Poincar\'e metric is isometric with~$\mathbb D$,
quite different from the incomplete metric~$g|_{\inte S}$.

\begin{obs}
In this paper we only consider bordered non-exceptional Riemann
surfaces whose boundary components are (simple) closed curves.
\end{obs}

\begin{lema}\label{embedding}
If a doubly connected $2$-dimensional Riemannian manifold $S$ embeds
isometrically into some open non-exceptional Riemann surface, then
it embeds isometrically into either an annulus, or a cusp, or the
unit disk.
\end{lema}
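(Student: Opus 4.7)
The plan is to study the homomorphism on fundamental groups induced by the embedding. Let $\iota\colon S\hookrightarrow R$ be the given isometric embedding into an open non-exceptional Riemann surface $R$, and write $\pi_R\colon\DD\to R$ for the universal covering projection. Since $S$ is doubly connected, $\pi_1(S)\cong\ZZ$. The deck group of $\pi_R$ is a free, discrete subgroup of $\mathrm{Aut}(\DD)$, and because it acts freely it contains no elliptic elements; in particular $\pi_1(R)$ is torsion-free. Thus the image $H:=\iota_{*}(\pi_1(S))\subseteq\pi_1(R)$ is either trivial or infinite cyclic, and these two possibilities will give the trichotomy in the statement.

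If $H=1$, the standard lifting criterion produces a continuous map $\tilde\iota\colon S\to\DD$ with $\pi_R\circ\tilde\iota=\iota$. Because $\pi_R$ is a local isometry and $\iota$ is injective, the lift $\tilde\iota$ is automatically injective (two points of $S$ sharing a value of $\tilde\iota$ would share a value of $\iota$) and a local homeomorphism onto its image; hence it is an isometric embedding of $S$ into $\DD$. This handles the unit-disk case.

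If $H$ is nontrivial, let $g$ generate $H$ and form the intermediate Riemannian cover $R':=\DD/\langle g\rangle$, with covering maps $q\colon\DD\to R'$ and $p\colon R'\to R$. Since $\iota_{*}(\pi_1(S))\subseteq p_{*}(\pi_1(R'))=H$, the lifting criterion again produces $\tilde\iota\colon S\to R'$ with $p\circ\tilde\iota=\iota$, and the same injectivity-plus-local-homeomorphism argument upgrades $\tilde\iota$ to an isometric embedding. It remains to identify $R'$: the element $g$ is a nontrivial Möbius transformation of $\DD$ acting freely, so it is either hyperbolic (with two fixed points on $\p\DD$) or parabolic (with a single fixed point on $\p\DD$). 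In the hyperbolic case $R'$ is conformally, hence isometrically for the Poincar\'e metric, an annulus; in the parabolic case $R'$ is a punctured disk, i.e.\ a cusp. This exhausts the three alternatives.

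The main obstacle I anticipate is a clean justification that $\tilde\iota$ is an embedding rather than merely an immersion, and that the embedding respects the Poincar\'e metrics at each stage. Both follow from the fact that the Poincar\'e metric on $R$ and on $R'$ are by definition descended from $ds=2|dz|/(1-|z|^2)$ on $\DD$, so $\pi_R$, $p$ and $q$ are all Riemannian coverings; injectivity of the lift is then forced by injectivity of $\iota$ together with the covering map property used sheet-by-sheet over a neighborhood of each point of $\iota(S)$. The only further care needed concerns the boundary behavior when $S$ is a bordered surface, but since the boundary of $S$ lies interior to $R$ and the lifting arguments are local near the image, they extend verbatim to this case.
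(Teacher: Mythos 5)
Your proof is correct and takes essentially the same route as the paper: both consider the image subgroup of $\pi_1(S)\cong\ZZ$ in $\pi_1(R)$, note it is trivial or infinite cyclic (using that deck transformations are never elliptic), and then lift to $\DD$ or to the intermediate cover $\DD/\langle g\rangle$, which is an annulus or a cusp depending on whether $g$ is hyperbolic or parabolic. Your version spells out the lifting criterion and the injectivity of the lift a bit more explicitly than the paper does, but the argument is the same.
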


\begin{proof}
Take any open non-exceptional Riemann surface $R$ containing $S$
isometrically. Consider the fundamental group $\pi_1(R)$ and the
subgroup ${\mathcal G}\subseteq\pi_1(R)$ defined by loops contained
in $S$. The group $\mathcal G$ is either trivial or infinite cyclic.
Consider also the covering map ${\mathbb D}\to R$.

If $\mathcal G$ is trivial then $S$ has a lift $S'\subset{\mathbb
D}$ which projects homeomorphically to $S$ under the covering map.
This $S'$ is isometric with $S$ and thus provides an isometric
embedding of $S$ into the unit disk.

If $\mathcal G$ is an infinite cyclic group $\langle\mu\rangle$,
then $S$ lifts to some region $S'\subset{\mathbb D}$ and the
restriction $S'\to S$ of the covering projection is equivalent to
the quotient map $S'\to S'/\langle\phi\rangle$, where $\phi$ is the
isometric action of $\mu\in\pi_1(R)$ on $\mathbb D$. The composition
$S\approx S'/\langle\phi\rangle\hookrightarrow{\mathbb
D}/\langle\phi\rangle$ is an isometric embedding.

The isometry $\phi$ cannot be elliptic, as it is induced by
$\mu\in\pi_1(R)$. If $\phi$ is a hyperbolic isometry, then ${\mathbb
D}/\langle\phi\rangle$ is an annulus. If $\phi$ is parabolic, then
${\mathbb D}/\langle\phi\rangle$ is a cusp.
\end{proof}

Lemma~\ref{embedding} enables us to classify, as Riemannian
manifolds, the bordered non-exceptional Riemann surfaces
homeomorphic with $S^1\times [0,\infty )$.

If $R$ is an annulus and $\g\subset R$ is an essential simple
closed curve, then the closure of each connected component of
$R\setminus\g$ is a doubly connected non-exceptional bordered
Riemann surface. Such bordered surfaces are called
\emph{generalized funnels,} as well as any bordered
non-exceptional Riemann surface isometric to any of these. A
\emph{funnel} is a generalized funnel whose boundary curve is a
geodesic.

If $R$ is a complete cusp and $\g\subset R$ is an essential simple
closed curve, then the closures $S_1,S_2$ of the connected
components of $R\setminus\g$ behave as follows:
\begin{enumerate}
\item On $S_1$ the curves freely homotopic to $\g$
become shorter as they run away from $\g$. We call $S_1$
\emph{narrow cusp end,} as well as any surface isometric with it. We
associate with $S_1$ an ideal point $q$ (the \emph{puncture}) such
that the conformal structure of $S_1$ extends to $S_1\cup\{ q\}$.
\item On $S_2$ the curves freely homotopic to $\g$ get
longer as they run away from $\g$. We call $S_2$, and any isometric
surface, \emph{wide cusp end.}
\end{enumerate}

A Jordan curve $\g\subset{\mathbb D}$ bounds two closed subsets in
the unit disk: a simply connected one and a doubly connected one.
We call the latter a \emph{disk end,} as well as any bordered
non-exceptional Riemann surface isometric to it.

Lemma~\ref{embedding} implies that if a bordered non-exceptional
Riemann surface $S$ is homeomorphic with $S^1\times [0,\infty )$
then it is isometric with one of the following: a generalized
funnel, a narrow cusp end, a wide cusp end, or a disk end (note that
$S$ is geodesically complete by definition).

\spb

Given a simple closed geodesic $\g$, we call the domain $\{p\in
S:\,d_S(p,\g)<d\}$ the \emph{collar of width $d$ about} $\g$ if it
is a tubular neighborhood of $\g$ with fibres the geodesics
orthogonal to $\g$. The Collar Lemma \cite{R}, \cite[Chapter 4]{Bu}
says that there exists a collar about $\g$ of width $d_0$, where
$\cosh d_0 = \coth (L_S(\gamma)/2)$ (the shorter the curve, the
thicker the collar). We shall use it in the proof of
Theorem~\ref{t:infinite}.

\begin{obs}\label{disjuntas}
Let $S$ be a non-exceptional Riemann surface, open or with compact
border. If $g_1$ is a closed curve neither freely homotopic to a
point, nor to a puncture, then the free homotopy class of $g_1$
contains a unique closed geodesic $\g_1$ which in fact minimizes
length in said class. If $g_1$ is simple then so is the geodesic
$\g_1$. If $g_1 ,g_2$ are simple, disjoint, and not freely
homotopic, then so are the corresponding geodesics $\g_1$
and~$\g_2$. In fact, if $L(g_1),L(g_2)\le l$ and $\cosh d_0=\coth
(l/2)$ then the collars of width $d_0$ about $\g_1$ and $\g_2$ are
disjoint (see \cite[Chapter 4]{Bu}).
\end{obs}

We next show that wide cusp ends and disk ends happen very rarely.

\begin{lema}\label{wide}
Let $S$ be a non-exceptional Riemann surface, open or bordered. If
$S$ has a wide cusp end, then it can only embed isometrically into a
cusp. If $S$ has a disk end, then it can only embed isometrically
into the unit disk.
\end{lema}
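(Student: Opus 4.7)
The plan is to write $R=\mathbb{D}/\Gamma$ for a torsion-free Fuchsian group $\Gamma$, take a connected lift $\tilde E\subset\mathbb{D}$ of the end $E$ under consideration ($E=W$ in part (i), $E=D$ in part (ii)), and set $\mathcal{G}=\mathrm{Stab}_\Gamma(\tilde E)$. As in the proof of Lemma~\ref{embedding}, $\mathcal G$ coincides with the image of $\pi_1(E)\cong\mathbb Z$ in $\pi_1(R)=\Gamma$, so it is either trivial or cyclic with generator a parabolic or hyperbolic isometry. The two tasks are (a) to pin down $\mathcal G$ geometrically, and (b) to rule out $\Gamma\supsetneq\mathcal G$ by showing that any extra $\nu\in\Gamma\setminus\mathcal G$ would force $\nu(\tilde E)\cap\tilde E\neq\emptyset$ --- a contradiction, since the distinct $\Gamma/\mathcal G$-translates of $\tilde E$ must be pairwise disjoint for $E\hookrightarrow R$ to be injective.

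For task (a), the isometric invariants I would exploit are closed simple curves of constant geodesic curvature. An inspection of $\mu$-invariant curves of constant geodesic curvature in $\mathbb{D}$ (hyperbolic circles, equidistants from geodesics, and horocycles, with absolute curvatures $>1$, $<1$, and $=1$ respectively) shows that closed simple curves of constant geodesic curvature $\kappa$ occur in $\mathbb{D}/\langle\mu\rangle$ only for: $\kappa>1$ when $\mu=\mathrm{id}$ (the hyperbolic circles); $\kappa<1$ when $\mu$ is hyperbolic (equidistants from the core geodesic, since no horocycle or hyperbolic circle is preserved by a hyperbolic $\mu$); $\kappa=1$ when $\mu$ is parabolic (the horocycles at $\mu$'s fixed point). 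A wide cusp end contains arbitrarily long horocycles ($\kappa=1$), forcing $\mathcal G=\langle\mu\rangle$ with $\mu$ parabolic; a disk end $\mathbb{D}\setminus\mathrm{int}(K_0)$ contains hyperbolic distance circles from any interior point $c$ of $K_0$ of sufficiently large radius $r$ ($\kappa=\coth r>1$), forcing $\mathcal G=\{1\}$.

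For task (b) in the wide cusp end case, I normalize to the upper half-plane with $\mu(z)=z+1$, so that $\tilde W=\{0<\mathrm{Im}(z)\le c_0\}$ by uniqueness of the wide cusp end. Every $\nu\in\Gamma$ that fixes $\infty$ preserves $\tilde W$ and hence lies in $\mathcal G$; so any $\nu\in\Gamma\setminus\mathcal G$ sends $\infty$ to a finite real point $a/c$ and the open horoball $\{\mathrm{Im}(z)>c_0\}$ to a bounded Euclidean horoball at $a/c$. Then $\nu(\tilde W)$ is the complement in $\mathbb{H}$ of a bounded set and so meets the unbounded strip $\tilde W$ --- the desired contradiction, giving $\Gamma=\langle\mu\rangle$ and $R$ a cusp. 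In the disk end case, rigidity of $\mathbb{H}^2$ (any isometric embedding of a connected open subset of $\mathbb{D}$ into $\mathbb{D}$ extends to a global isometry of $\mathbb{D}$) makes $\tilde D=\mathbb{D}\setminus\mathrm{int}(K)$ for a compact simply connected $K$; for any $\nu\neq 1$, the set $K\cup\nu(K)$ is compact while $\mathbb{D}$ is not, so $\tilde D\cap\nu(\tilde D)\neq\emptyset$ --- again a contradiction, so $\Gamma=\{1\}$ and $R=\mathbb{D}$. The main obstacle is task (a): one must verify the catalogue of closed simple curves of constant geodesic curvature in $\mathbb{D}/\langle\mu\rangle$ is exhaustive for each of the three types of $\mu$, which reduces to the observation that a horocycle of $\mathbb{D}$ is invariant only under parabolics sharing its ideal fixed point, while a hyperbolic circle is invariant only under elliptics fixing its center, neither of which lies in the freely acting $\Gamma$.
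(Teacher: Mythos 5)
Your proof is correct and, unlike the paper's, actually supplies the substantive arguments. The paper's own proof is three sentences long: it invokes the trichotomy of Remark~\ref{disjuntas} (the essential curve of the end $S_0$ is freely homotopic in $R$ to a closed geodesic, a puncture, or a point), dismisses the geodesic alternative with the single clause ``because then $S_0$ would be part of a funnel end,'' and then asserts that the remaining two alternatives \emph{force} $R$ to be a cusp or the unit disk, respectively, with no further explanation. What you do differently, and what the comparison buys, is this. For task~(a) you use the catalogue of simple closed essential curves of constant geodesic curvature in $\DD/\langle\mu\rangle$ (circles with $\kappa>1$, equidistants with $0<\kappa<1$, horocycles with $\kappa=1$) not just to exclude the funnel/hyperbolic case, as the paper does, but also to distinguish the wide-cusp-end case ($\mathcal G$ parabolic, via the $\kappa=1$ horocycles) from the disk-end case ($\mathcal G$ trivial, via $\kappa=\coth r>1$); the paper silently assumes this matching. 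For task~(b) — the step the paper compresses into the word ``forces'' — you actually prove $\Gamma=\mathcal G$, by observing that the lift $\tilde E$ is the complement of a bounded set (near the parabolic fixed point, resp.\ near the whole ideal circle), so any $\nu\in\Gamma\setminus\mathcal G$ would have $\nu(\tilde E)\cap\tilde E\ne\varnothing$, contradicting the fact that distinct $\Gamma$-translates of $\tilde E$ are distinct connected components of $\pi^{-1}(E)$ and hence disjoint. Two small repairs are worth noting: the disjointness of translates should be justified by the connected-component argument rather than by ``injectivity of $E\hookrightarrow R$'' (which is automatic for a subset); and your claim that any $\nu\in\Gamma$ fixing $\infty$ preserves $\tilde W$ needs the discreteness of $\Gamma$ to rule out a hyperbolic $\nu$ fixing $\infty$ alongside the parabolic $\mu(z)=z+1$ — otherwise $\nu\mu\nu^{-n}$ would yield arbitrarily small translations. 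Both are minor and do not affect the soundness of the argument.
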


\begin{proof}
Suppose $R$ is an open non-exceptional Riemann surface containing
$S$ isometrically. Then $R$ shares with $S$ an end $S_0$ which is
either a wide cusp end or a disk end. The essential simple closed
curves in $S_0$ do not give rise to a non-constant geodesic in $R$,
because then $S_0$ would be part of a funnel end. By
Remark~\ref{disjuntas}, these curves must be freely homotopic in $R$
either to a puncture, which forces $R$ to be a cusp, or to a point,
which forces $R$ to be the unit disk.
\end{proof}

We call \emph{compact annulus} any non-exceptional bordered
Riemann surface homeomorphic with $S^1\times [0,1]$. Applying
Lemma~\ref{embedding} to these surfaces we obtain the following
result, which will be essential in the proof of
Theorem~\ref{t:clasef}.

\begin{prop}\label{RS}
Let $S$ be any bordered non-exceptional Riemann surface. There is a
canonical choice $R_S$ of an open non-exceptional Riemann surface
with the following properties: (1) $S$ embeds isometrically into
$R_S$, (2) $R_S$ has the same genus as $S$, and (3) $\chi
(R_S)\geq\chi (S)$.
\end{prop}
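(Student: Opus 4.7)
The plan is to build $R_S$ by attaching a canonical doubly-connected extension across each boundary component of $S$. Let $\gamma_1,\dots,\gamma_m$ denote the boundary components of $S$; these are simple closed curves by hypothesis. Choose pairwise disjoint compact doubly-connected tubular neighborhoods $N_i\subset S$ of the $\gamma_i$, so each $N_i$ is an annular region with boundary $\gamma_i\cup\gamma_i'$ where $\gamma_i'\subset\inte S$.

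By Lemma~\ref{embedding} applied to each $N_i$, this compact annulus embeds isometrically into exactly one of three canonical targets: a hyperbolic annulus, a cusp, or the unit disk $\DD$. Let $E_i$ be the closure of the connected component of (target $\setminus\, N_i$) whose boundary is $\gamma_i$ (the component \emph{not} containing $\gamma_i'$). Depending on the case, $E_i$ is a half-funnel, a narrow cusp end, a wide cusp end, a disk end, or a topological disk; in every case $E_i$ is complete, has constant curvature $-1$, and has $\gamma_i$ as its only topological boundary.

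Define $R_S:=S\cup\bigcup_i E_i$ with the isometric identifications along each $\gamma_i$. Then $R_S$ is a complete surface of constant curvature $-1$ without boundary, hence an open non-exceptional Riemann surface. Topologically, each $E_i$ is either a cylinder or a disk. Attaching a cylinder replaces a boundary component by an $S^1\times[0,\infty)$ end, leaving $n$ (and hence $\chi$) unchanged; attaching a disk removes a boundary component without producing an end, decreasing $n$ by one and increasing $\chi$ by one. Since no handles are added in either case, the genus is preserved, and summing over $i$ yields properties (1), (2), and (3).

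Canonicity of $R_S$ follows because both the target and the position of $N_i$ in it are determined up to ambient isometry by $N_i$ alone, and different choices of the neighborhoods $N_i$ give compatibly nested extensions. I expect the main obstacle to be the disk case, in which $\DD$ has two sides along $\gamma_i$ (a topological disk and a disk end), and one must verify that the side singled out as ``not containing $\gamma_i'$'' is canonically determined by the isometric embedding of $N_i$; this is the only place in the argument where the answer is not immediate from Lemma~\ref{embedding}.
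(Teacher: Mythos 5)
Your proof is correct and follows the same basic strategy as the paper (collars of the boundary components, Lemma~\ref{embedding}, then gluing), but it handles the troublesome sub-cases differently. The paper splits the analysis: in the two ``global'' sub-cases (gluing a disk end or a wide cusp end), it does not glue at all, but instead invokes Lemma~\ref{wide} to conclude that $S$ already embeds into $\DD$ or into a cusp, and sets $R_S$ equal to that surface, verifying properties (2) and (3) directly from that identification (e.g.\ noting $\pi_1(S)\neq 1$ in the wide-cusp case). You instead glue uniformly across all five sub-cases, and you get (2) and (3) purely by the topological bookkeeping of ``cylinder attached'' vs.\ ``disk attached''. This is actually a small simplification, since you never need to identify $R_S$ as $\DD$ or the cusp to get the Euler characteristic inequality, and properties (1)--(3) follow regardless of which sub-case occurs. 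The obstacle you flag is not really one: the isometric embedding $N_i\hookrightarrow\DD$ is unique up to isometries of $\DD$, which preserve compactness of complementary components, so ``the side bounded by $\gamma_i$ and not containing $\gamma_i'$'' (compact disk vs.\ disk end) is canonically determined by $N_i$ alone. What deserves one more remark in your write-up: when you assert ``exactly one of three targets'', this uniqueness is not stated in Lemma~\ref{embedding} (it is an existence statement); it holds because the conjugacy class of the holonomy of the core loop of $N_i$ is intrinsic and determines the type (trivial $\mapsto\DD$, parabolic $\mapsto$ cusp, hyperbolic $\mapsto$ annulus with a determined modulus). You also do not observe -- though your topological accounting makes it irrelevant for (1)--(3) -- that at most one boundary component can fall into the disk-end or wide-cusp-end sub-case (otherwise the glued surface would have two such ends, contradicting Lemma~\ref{wide}); the paper's case-split makes this visible, and it matters for the ``canonical'' claim.
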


\begin{proof}
Let $\g$ be any connected component of $\partial S$ and take a
compact annulus $A\subset S$ bounded by $\g$ and another simple
closed curve~$\eta\subset S$. When one applies Lemma~\ref{embedding}
to $A$, there are five possibilities:
\begin{enumerate}
\item We find an isometric embedding $f:A\to{\mathbb D}$ and $f(\g
)$ lies in the interior of $f(\eta )$. In this case we can glue the
bounded component of ${\mathbb D}\setminus f(\eta )$ to $S$ by
overlapping $A\setminus\eta$ with itself, thereby producing a larger
Riemannian surface that has $S$ isometrically embedded inside it and
has one less boundary component than~$S$.
\item We find an isometric embedding $f:A\to{\mathbb D}$ and $f(\eta
)$ lies in the interior of $f(\g )$. Then we glue, with overlapping,
the doubly connected component of ${\mathbb D}\setminus f(\eta )$ to $S$
and produce a surface (open or bordered) which contains $S$
isometrically and has a disk end. By Lemma~\ref{wide} the latter
surface, and hence also $S$, embeds isometrically into the unit
disk. \item $A$ embeds isometrically into a cusp $R_0$, so that $\g$
is the boundary of the narrow end of $R_0\setminus\inte S$ and
$\eta$ is the boundary of the wide end. Then we can glue, with
overlapping, the narrow end of $R_0$ bounded by $\eta$ to $S$; this
produces a surface that contains $S$ isometrically and has the
boundary curve $\g$ replaced with a puncture. \item $A$ embeds
isometrically into a cusp $R_0$, so that $\g$ is the boundary of the
wide end of $R_0\setminus\inte S$ and $\eta$ is the boundary of the
narrow end. Then we glue, with overlapping, the wide end of $R_0$
bounded by $\eta$ to $S$; we thus obtain a surface that contains $S$
isometrically and has a wide cusp end. This and Lemma~\ref{wide}
imply that $S$ embeds isometrically into a cusp, so that $\g$ is an
essential curve in that cusp. This forces $\pi_1(S)$ to be
non-trivial. \item $A$ embeds isometrically into an annulus. Then we
find a generalized funnel which we can glue to $S$ with overlapping,
and thus produce a surface that contains $S$ isometrically and has
the boundary curve $\g$ replaced with a funnel end.
\end{enumerate}
If one boundary curve of $S$ is in case (2) we take $R_S={\mathbb
D}$. If one boundary curve of $S$ is in case (4) we make $R_S$ equal
to the cusp that contains $S$ isometrically; in this case $\chi
(R_S)=0\geq\chi (S)$ because $\pi_1(S)$ is non-trivial.

Suppose now that cases (2) and (4) do not occur for any connected
component of $\partial S$. The boundary $\p S$ consists of a
sequence $\g_1,\g_2,\dots$ of simple closed curves, and we can
choose the corresponding compact annuli $A_1,A_2,\dots$ pairwise
disjoint. Then we do, simultaneously for all $\g_i$, the gluing that
corresponds to each of them (described in the odd-numbered cases),
and we obtain a surface $R_S$ which is geodesically complete, with
$K\equiv -1$, with empty boundary, and with the same genus as $S$.
Moreover, $R_S$ contains $S$ isometrically inside it and satisfies
$\chi (R_S)\geq \chi (S)$, with strict inequality if case (1) has
occurred at least once. It is easy to see that $R_S$ does not depend
on the choice of the pairwise disjoint sequence $\{ A_i\}_i$.
\end{proof}

\begin{definicion}
Let us consider a non-exceptional Riemann surface $S$ of finite type
(open or with compact border) with $\chi (S)\leq 0$. An \emph{ outer
loop } in $S$ is either the boundary geodesic of a funnel or the
minimizing curve in the free homotopy class of some connected
component of~$\p S$. We consider punctures as outer loops of zero
length.
\end{definicion}

\begin{definicion}\label{d:clasef}
Fix a non-negative integer $a$ and a positive real number $l$. We
denote by $\F(a,l)$ the set of non-exceptional Riemann surfaces of
finite type $S$ verifying the following properties:

\begin{enumerate}
\item
S has no genus and $0\geq\chi (S)\geq -a$, equivalently $0\leq
n-2\leq a$, where $n$ is the number from Definition~\ref{chi}.
\item
If $\chi (S)=0$, then the unique outer loop has length less than or
equal to $l$. If $\chi (S)<0$ then every outer loop, except perhaps
one of them, has length less than or equal to $l$.
\item If $\p S$ is non-empty, then $L_S(\p S) \le l$.
\end{enumerate}

We denote by $\S(a,l)$ the set of Riemann surfaces $S\in \F(a,l)$
verifying that every outer loop has length less than or equal to
$l$. Notice that $\S (a,l)$ and $\F (a,l)$ coincide only for~$a=0$.
\end{definicion}

\begin{teo}
\emph{(\cite[Theorem 5.3]{PRT3})} \label{t:clases} For each $l\ge 0$
and each non-negative integer $a$, there exists a constant $\d$,
which only depends on $a$ and $l$, such that every surface in
$\S(a,l)$ is $\d$-hyperbolic.
\end{teo}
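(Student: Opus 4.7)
The plan is to exhibit a tree decomposition of $S$ into pieces of uniformly bounded Gromov hyperbolicity constant and bounded intersection diameters, and then apply Theorem~\ref{t:treedecomp}.

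Since $S$ has no genus and $0\le n-2\le a$, a pants decomposition of $S$ uses at most $a$ pants glued along at most $a-1$ interior simple closed geodesics, and its dual graph is a tree because $S$ is planar. I cut $S$ along the union $\mathcal G$ of: (i) all outer loops of $S$ (each of length $\le l$ by the definition of $\S(a,l)$), and (ii) the interior cuffs of a chosen pants decomposition. By Remark~\ref{disjuntas} the curves in $\mathcal G$ can be taken pairwise disjoint.

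The resulting pieces fall into four model types: funnels of boundary length $\le l$, wide collars between a boundary component of $\p S$ and its outer loop, narrow cusp ends attached at a puncture, and hyperbolic pairs of pants. Funnels, wide collars and cusp ends are uniformly hyperbolic with constants depending only on $l$. A hyperbolic pair of pants is uniformly Gromov hyperbolic, because it is quasi-isometric to a weighted ``Y''-graph whose edge lengths are controlled by its half-cuff lengths, with quasi-isometry constants depending only on the pants' three cuff lengths. Outer loops, which form the intersections between end-type pieces and pants, have intrinsic diameter $\le l/2$ in each adjacent piece. Interior cuffs may have any length, and when an interior cuff is long I refine the decomposition by further cutting the adjacent pants along equidistant curves in the collar of the long cuff; by the Collar Lemma the collar of a long cuff is narrow (its width tends to zero as the cuff length grows), so the transverse cuts have bounded length, and the refined subpieces have bounded diameter.

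After refinement, every intersection is a simple closed geodesic of length $\le l$ or an equidistant circle of universally bounded length, so its intrinsic diameter in each adjacent piece is bounded; each piece has at most three neighbors, so the total intersection diameter per piece is bounded by some $k=k(a,l)$. Theorem~\ref{t:treedecomp} then yields the uniform hyperbolicity $\d=\d(a,l)$ of $S$. The main obstacle is the refinement step for pants with long interior cuffs: one must verify both that the transverse cuts have bounded length (which follows from the Collar Lemma applied to the long cuff) and that the refined subpieces remain uniformly hyperbolic (which holds because they are topological disks or annuli with bounded geometry). With these two ingredients the decomposition meets the hypotheses of Theorem~\ref{t:treedecomp}, and the conclusion follows.
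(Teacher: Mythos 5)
The paper does not prove Theorem~\ref{t:clases}: it is quoted verbatim from \cite{PRT3}, so there is no in-text argument to compare against. Evaluating your plan on its own, the tree-decomposition skeleton is a sensible route, but the refinement step for long interior cuffs contains a genuine error. The collar of a long geodesic $\g$ of length $M$ is indeed narrow, but the equidistant curves inside that collar are concentric with $\g$ and have length $M\cosh d\ge M$: they get longer, not shorter. Cutting along them therefore produces neither intersections of bounded intrinsic diameter nor subpieces of bounded diameter -- each subpiece still wraps once around the long cuff. If you instead meant arcs transverse to $\g$ (crossing the collar), those are short, but the resulting subpieces are thin strips of diameter comparable to $M$, and your assertion that such pieces are ``uniformly hyperbolic ... because they are topological disks or annuli with bounded geometry'' is unsupported: a simply connected region of a hyperbolic surface, with its induced path metric, need not be Gromov hyperbolic with any controlled constant. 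The structural obstruction is that Theorem~\ref{t:treedecomp} requires $\sum_m\diam_{X_n}(\eta_{nm})$ to be uniformly bounded, and a cuff of length $M$ contributes roughly $M/2$ to this sum in each neighbouring piece no matter how you subdivide parallel to the cuff. Your quasi-isometry-to-a-$Y$-graph argument for pants also openly has constants ``depending only on the pants' three cuff lengths,'' so it too requires the interior cuffs to be bounded, which is precisely what your refinement was supposed to arrange; the argument is circular as it stands.

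What is missing is a Bers-type lemma: for a genus-zero hyperbolic surface with at most $a+2$ ends whose outer loops all have length $\le l$, there is a pants decomposition whose interior cuffs all have length $\le B(a,l)$. This follows by a standard Mumford-compactness / induction-on-pinching argument using the area bound $2\pi(n-2)\le 2\pi a$. With it, every compact pants piece has all three cuffs $\le\max(l,B(a,l))$ and is $\d_0(a,l)$-hyperbolic, every intersection has intrinsic diameter $\le B(a,l)/2$, there are at most $a$ pants and $a+2$ end pieces, and Theorem~\ref{t:treedecomp} applies directly -- no refinement is needed. Alternatively you could follow the route the paper itself uses for the stronger Theorem~\ref{t:clasef}: represent $S$ as $\DD\setminus(E_1\cup\cdots\cup E_N)$ with each $E_j$ simply connected compact, and invoke Corollary~\ref{c:finite}, which rests on Theorem~\ref{t:finite}, Theorem~\ref{t:balls} and the uniformly separated set machinery; this bypasses pants decompositions entirely.
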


\begin{definicion}
An $N$-\emph{normal neighborhood} of a subset $F$ of a Riemann
surface $S$ is a compact, connected, bordered Riemann surface
without genus $V$ such that $F\subset V\subset S$, and $\p V$ is the
union of at most $N$ simple closed curves, i.e. $\chi (V)\geq 2-N$.

A set $E=\cup_n E_n$ in an open non-exceptional Riemann surface $S$,
with each $E_n$ compact, is called $(r,s,N)$-\emph{uniformly
separated} in $S$ if for every $n$ we can choose an $N$-normal
neighborhood $V_n$ of $E_n$ such that $V_n\setminus E_n$ is
connected, $d_S (\p V_n, E_n)\ge r$, and $L_S (\p V_n)\le s$, and
the whole sequence $\{ V_n\}_n$ can be chosen so that $d_S (V_n,
V_m)\ge r$ for every $n\neq m$.
\end{definicion}

\begin{obs}\label{separated}
As each $V_n$ has zero genus by definition, it is $V_n\in
\S(N-2,s)\subset \F(N-2,s)$ independently of~$r$. Also, Alexander
duality implies that if $E_n$ and $V_n\setminus E_n$ are connected,
then $E_n$ is simply connected.
\end{obs}

\spb

The uniformly separated sets play a central role in many topics in
Complex Analysis, such as interpolation in the unit disk $\DD$ (see
\cite{Ca}), harmonic measure (see \cite{OS}) and the study of linear
isoperimetric inequalities in open Riemann surfaces (see
\cite[Theorem 1]{APR} and \cite[Theorems 3 and 4]{FR1}).

\begin{definicion}\label{def:D}
Let $S$ be an open non-exceptional Riemann surface, $E=\cup_nE_n$ an
$(r,s,N)$-uniformly separated set in $S$ and $S^*:=S \setminus E$.
For each choice of $\{V_n\}_n$ we define
$$
\aligned D_{S^*}=D_{S^*}(\{V_n\}_n):=\sup_{n,i,j}\big\{\,
 & d_{S^*}|_{V_n\setminus E_n}(\eta^n_i,\eta^n_j)\; : \;
 \eta^n_i,\eta^n_j
\text{ are different connected components of $\p V_n$} \\
& \; \text{and } \; \eta^n_i,\eta^n_j \text{ are in the same
connected component of } S\setminus\inte V_n\,
 \big\}.
\endaligned $$
\end{definicion}

\mpb

\begin{obs}
(1) Note that if $\eta^n_i,\eta^n_j$ are in the same connected
component of $S\setminus\inte V_n$, then $S \setminus \eta_i^n$ is
connected.

(2) Recall that $d_{S^*}\neq d_S|_{S^*}$, since $(S^*,d_{S^*})$ is a
geodesically complete Riemannian manifold (the points of $E$ are at
infinite $d_{S^*}$-distance of the points of $S^*$; in fact, $S^*$
is an open non-exceptional Riemann surface).
\end{obs}

\mpb

The following results show the relevance of $D_{S^*}(\{V_n\}_n)$
(see also Theorem \ref{t:main}).

\begin{prop}
\emph{(\cite[Proposition 5.1]{PRT3})} \label{p:prt}
 Let $S$ be an open
non-exceptional Riemann surface, $E=\cup_nE_n$ an
$(r,s,N)$-uniformly separated set in $S$ and $S^*:=S \setminus E$.
Let us assume also that we can choose the sets $\{V_n\}_n$ such
that $D_{S^*}(\{V_n\}_n)=\infty$. Then $S^*$ is not hyperbolic.
\end{prop}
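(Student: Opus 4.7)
I would prove this by contrapositive: assume $S^*$ is $\delta$-hyperbolic and bound $D_{S^*}(\{V_n\}_n)$ by a constant depending only on $\delta,r,s,N$. Fix $n$ and boundary components $\eta=\eta_i^n$, $\eta'=\eta_j^n$ of $V_n$ in a common connected component $U$ of $S\setminus\inte V_n$; set $L:=d_{S^*}|_{V_n\setminus E_n}(\eta,\eta')$, pick $p\in\eta$, $q\in\eta'$ realising this intrinsic distance, and let $\alpha\subset V_n\setminus E_n$ be the intrinsic minimizer of length $L$. The plan is to build a $d_{S^*}$-geodesic triangle in $S^*$ whose Rips-thinness forces $L$ to be bounded.

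The key local observation is: for any $z\in\inte V_n$ one has $d_{S^*}(z,\p V_n)=d_{S^*}|_{V_n\setminus E_n}(z,\p V_n)$, because any $d_{S^*}$-minimising path from $z$ to $\p V_n$ must, up to its first hit of $\p V_n$, lie in $\overline{V_n\setminus E_n}$. Applied to the intrinsic midpoint $m=\alpha(L/2)$, together with the intrinsic triangle inequality and the bound $L_{S^*}(\p V_n)\le s$, this yields $d_{S^*}(m,\eta)\ge L/2-s/2$ and $d_{S^*}(m,\eta')\ge L/2-s/2$. The other (at most $N-2$) boundary components of $V_n$ may a priori come closer to $m$; ruling this out uniformly via the separation $d_S(V_n,V_m)\ge r$ and the $N$-normal structure of $V_n$ will yield a constant $C_0=C_0(s,N,r)$ with $d_{S^*}(m,\p V_n)\ge L/2-C_0$.

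Since $\eta$ and $\eta'$ lie in $\overline{U}$, I would also choose a path $\beta\subset\overline{U}\subset S^*$ from $p$ to $q$ of length $L_{S^*}(\beta)\le C'(r,s,N)$ by taking $\beta$ inside a thin tubular collar of $\eta\cup\eta'$ in $\overline{U}$. Then in the geodesic triangle $T=(p,q,m)$ the side $[pq]$ has length $\le C'$, and since both its endpoints lie on $\p V_n$, every point of $[pq]$ is within $C'/2$ of $\p V_n$ in $d_{S^*}$; combined with $d_{S^*}(m,\p V_n)\ge L/2-C_0$ and the triangle inequality, this gives $d_{S^*}(m,[pq])\ge L/2-C_0-C'/2$. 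Invoking the Rips condition (thinness constant $4\delta$) at the midpoint $m_0$ of $[pm]$, either $m_0$ is $4\delta$-close to a point of $[pq]$, directly yielding $L/4\le C_0+C'/2+4\delta$ and hence a uniform bound on $L$, or $m_0$ is $4\delta$-close to $[qm]$, in which case I would apply the same Rips analysis to the midpoint of $[qm]$ and combine the two via a four-point (Gromov-product) argument to reach the same conclusion.

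The hard part will be the passage from the clean intrinsic estimates $d_{S^*}|_{V_n\setminus E_n}(m,\eta), d_{S^*}|_{V_n\setminus E_n}(m,\eta')\ge L/2-s/2$ to the uniform lower bound $d_{S^*}(m,\p V_n)\ge L/2-C_0$ over \emph{all} components of $\p V_n$: a third boundary component $\eta_k^n$ could in principle lie very close to $m$ in $V_n\setminus E_n$ without contradicting the minimality of $\alpha$ between $\eta$ and $\eta'$. Ruling this out requires careful topological bookkeeping inside the genus-zero surface $V_n$ with at most $N$ boundary components, together with the observation that any such short detour via $\eta_k^n$ would either create a shortcut contradicting the choice of $L$, or else force $\eta_k^n$ to lie in a different connected component of $S\setminus\inte V_n$, where the uniform separation $r$ and the length bound $L_{S^*}(\p V_n)\le s$ permit a bound on the detour uniform in $L$.
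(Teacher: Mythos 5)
The paper does not prove this proposition: it is quoted verbatim from \cite[Proposition 5.1]{PRT3}, so there is no in-text proof to compare your attempt against. I can, however, assess your argument on its own terms, and I find a concrete gap.

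The step that fails is the construction of the short side $[pq]$. You assert that one can choose a path $\beta\subset\overline{U}$ from $p\in\eta$ to $q\in\eta'$ with $L_{S^*}(\beta)\le C'(r,s,N)$, ``by taking $\beta$ inside a thin tubular collar of $\eta\cup\eta'$ in $\overline{U}$''. This cannot work: a collar neighborhood of $\eta\cup\eta'$ in $\overline{U}$ is a disjoint union of two annuli, one around $\eta$ and one around $\eta'$, and contains no path joining them. More fundamentally, the extrinsic distance $d_{S^*}(\eta,\eta')$ has no uniform bound in terms of $(r,s,N)$: the region $U$ is a component of $S\setminus\inte V_n$ and can be arbitrarily large, while the only way to join $\eta$ to $\eta'$ inside $V_n\setminus E_n$ costs at least $L$. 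Thus all three sides of your triangle $(p,q,m)$ may grow with $L$, and the Rips argument at the midpoint $m_0$ of $[pm]$ then collapses: the alternative ``$m_0$ is $4\delta$-close to $[qm]$'' is not contradictory for any $L$ when $[pq]$ is long, and the promised four-point argument is not supplied. On top of this there is the secondary difficulty you honestly flag yourself, namely promoting $d_{S^*}|_{V_n\setminus E_n}(m,\eta)=d_{S^*}|_{V_n\setminus E_n}(m,\eta')=L/2$ to a lower bound on $d_{S^*}(m,\p V_n)$ over \emph{all} boundary components of $V_n$; that is a real issue, but secondary to the missing bound on $[pq]$.

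To make a contrapositive argument of this flavor work one has to avoid relying on a short connector through the exterior. A natural replacement is to build a nontrivial closed curve $\gamma=\alpha\cdot\beta^{-1}$ in $S^*$ (its algebraic intersection with $\eta$ is $\pm 1$, so $\gamma$ is essential) without controlling $L_{S^*}(\beta)$, and then compare the two ``sides'' of $\gamma$ relative to the subsurface cut out by the geodesics freely homotopic to $\eta$ and $\eta'$ (which do have $S^*$-length bounded by $s\coth(r/2)$, via Lemma~\ref{l:cociente}). The non-thinness should then be extracted from the fact that one side is forced to run deep inside $V_n\setminus E_n$ (staying $\gtrsim L/2$ from $\p V_n$) while the other is confined to $\overline{U}$, which is disjoint from $\inte V_n$. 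As written, your argument does not reach that point, so the proof is incomplete.
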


\begin{teo}
\emph{(\cite[Theorem 5.4]{PRT3})} \label{t:rsn} Let $S$ be an open
non-exceptional Riemann surface and $E=\cup_nE_n$ an
$(r,s,N)$-uniformly separated set in $S$. Then, $S^*:=S\setminus E$
is $\d^*$-hyperbolic if and only if $S$ is $\d$-hyperbolic,
$D_{S^*}(\{V_n\}_n)$ is finite and $V_n\setminus E_n$ is
$k$-hyperbolic for every $n$ (with $d_{S^*}|_{V_n\setminus E_n}$).

Furthermore, if $D_{S^*}(\{V_n\}_n)$ is finite and $V_n\setminus
E_n$ is $k$-hyperbolic for every $n$, then $\d^*$ (respectively
$\d$) is a universal constant which only depends on
$r,s,N,k,D_{S^*}(\{V_n\}_n)$ and $\d$ (respectively
$r,s,N,D_{S^*}(\{V_n\}_n)$ and $\d^*$).
\end{teo}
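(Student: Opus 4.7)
The plan is to prove both implications via the tree-decomposition machinery of Theorem~\ref{t:treedecomp}, applied to a natural decomposition of $S^*$ into the pieces $V_n\setminus E_n$ and a ``core'' piece carrying the bulk of the geometry of $S$. Explicitly, set $W:=\overline{S\setminus\bigcup_n\inte V_n}$, and write
\[
S^*\;=\;W\;\cup\;\bigcup_n (V_n\setminus E_n).
\]
Since the $V_n$ are pairwise disjoint with $d_S(V_n,V_m)\ge r$ and each $V_n$ has boundary of total length $\le s$, the intersections $W\cap (V_n\setminus E_n)=\p V_n$ are compact (at most $N$ simple closed curves of total length $\le s$) and removing any of them disconnects $S^*$ because $V_n\setminus E_n$ is connected by hypothesis. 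Thus the combinatorics of a tree decomposition are present; the key metric input is the diameter bound on the $\p V_n$ measured inside their neighbors, which is exactly what $D_{S^*}(\{V_n\})<\infty$ guarantees (together with the length bound $s$).

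For the ($\Leftarrow$) direction, assume $S$ is $\d$-hyperbolic, $D_{S^*}(\{V_n\})$ is finite, and each $V_n\setminus E_n$ is $k$-hyperbolic with its induced metric. The two things to check before invoking Theorem~\ref{t:treedecomp} are (i) the tree-decomposition diameter condition, which holds because in each $V_n\setminus E_n$ the relative boundary components have pairwise distance $\le D_{S^*}$ and individual diameters $\le s$, and in $W$ the diameters of the finitely many curves of $\p V_n$ meeting $W$ are $\le s$; and (ii) uniform hyperbolicity of $W$ with $d_{S^*}|_W$. For (ii), note that $W\subset S$ with $d_S$ is a closed $r$-separated perturbation of $S$ (we have only removed the interiors of $V_n$'s, which are small balls sitting in compact $N$-normal neighborhoods of uniformly bounded size). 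A quasi-isometry argument, comparing paths in $W$ to paths in $S$ by rerouting across $\p V_n$ (with cost $\le s$ per rerouting and at controlled frequency because of the $r$-separation), shows that $(W,d_{S^*}|_W)$ is quasi-isometric to $(S,d_S)$ with universal constants depending only on $r,s,N$; hyperbolicity of $S$ then transfers to $W$. Theorem~\ref{t:treedecomp} yields $\d^*$-hyperbolicity of $S^*$.

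For the ($\Rightarrow$) direction, assume $S^*$ is $\d^*$-hyperbolic. Each $V_n\setminus E_n$ with $d_{S^*}|_{V_n\setminus E_n}$ inherits hyperbolicity because it is almost convex in $S^*$: any geodesic in $S^*$ joining two points of $V_n\setminus E_n$ that exits $V_n$ must cross $\p V_n$ twice, and by the uniform separation it is cheaper (up to additive constant) to stay inside $V_n\setminus E_n$; this yields $k$-hyperbolicity with $k$ depending only on $\d^*,r,s,N$. The necessity of $D_{S^*}(\{V_n\})<\infty$ is Proposition~\ref{p:prt}. Finally, to get hyperbolicity of $S$, one transfers hyperbolicity from $W$ (which inherits it from $S^*$ by the same convexity argument) back to $S$ by filling each $V_n$ back in; since $V_n$ has controlled topology and length of boundary, the filling is a bounded-distortion modification, and $S$ is $\d$-hyperbolic with $\d$ depending only on $\d^*,r,s,N$.

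The main obstacle, and the step that requires the most care, is verifying that the piece $W$ with $d_{S^*}|_W$ is uniformly quasi-isometric to a region inside $S$ that captures the large-scale geometry of $S$. One must rule out the pathology where geodesics in $S^*$ are forced to make long detours around many nearby $V_n$, which is precisely where the uniform separation hypothesis and the diameter control $D_{S^*}<\infty$ enter in an essential way; both are needed to pass distance information between $S^*$ and $S$ with quasi-isometry constants depending only on $r,s,N$, $\d$ or $\d^*$, and $D_{S^*}$.
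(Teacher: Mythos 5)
The paper does not prove Theorem~\ref{t:rsn}: it is quoted verbatim from \cite[Theorem 5.4]{PRT3}. So I can only assess your argument on its own merits.

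Your ``proof'' has a gap that is fatal, not cosmetic. You propose to apply Theorem~\ref{t:treedecomp} to the decomposition of $S^*$ into one large piece $W:=\overline{S\setminus\bigcup_n\inte V_n}$ together with the family $\{V_n\setminus E_n\}_n$. But re-read Definition~\ref{d:treedecomp}: a $k$-tree decomposition requires, \emph{for each piece $X_n$}, that $\sum_m\diam_{X_n}(\eta_{nm})\le k$, where the sum runs over \emph{all} neighboring pieces. In your decomposition, $W$ is a neighbor of every single $V_n\setminus E_n$, so for $W$ the condition reads
\[
\sum_{n}\diam_{W}\bigl(\p V_n\bigr)\;\le\;k .
\]
When $E$ has infinitely many components (the generic and most interesting case in this paper, cf. Theorem~\ref{t:infinite}), this is a sum of infinitely many strictly positive terms and it diverges. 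The uniform bounds $L_S(\p V_n)\le s$ and $D_{S^*}<\infty$ control each term individually but do nothing to control the sum. So your decomposition is simply not a $k$-tree decomposition for any finite $k$, and Theorem~\ref{t:treedecomp} cannot be invoked. Note that the paper itself only ever applies Theorem~\ref{t:treedecomp} with a finite family of pieces (e.g. $\{S,R^1,\dots,R^s\}$ in the proof of Theorem~\ref{t:clasef}), precisely because of this constraint. Your argument would only work when $E$ has finitely many components, which misses the core difficulty of the theorem.

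There is a secondary issue in the same step: since $V_n$ may have up to $N$ boundary components, removing $\inte V_n$ from $S$ can disconnect the surface, so $W$ need not be connected; the tree-decomposition framework requires connected geodesic pieces. You would have to replace $W$ by its components, and these in turn need not carry the large-scale geometry of $S$. Beyond that, the assertion that $(W,d_{S^*}|_W)$ is quasi-isometric to $(S,d_S)$ with constants depending only on $r,s,N$ is asserted rather than argued; even with the bi-Lipschitz comparison of Lemma~\ref{l:cociente} on the $r$-neighborhood-away region, the ``rerouting'' step is nontrivial because you must show the detours do not accumulate, and hyperbolicity does not in general survive deletion of a uniformly separated family of sets of unbounded diameter (indeed Theorem~\ref{t:main} shows this requires the finiteness of $D_{S^*}$). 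A correct proof has to encode the interaction between $S$ and the $V_n$ more carefully than a naive two-level tree decomposition; the mechanism behind \cite[Theorem 5.4]{PRT3} is not recoverable from the sketch you give.
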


In the above theorem $\{ V_n\setminus E_n\}_n$ is a family of
bordered non-exceptional Riemann surfaces, all isometrically
embedded into $S^*$, and this family is required to be uniformly hyperbolic.
This uniform hyperbolicity condition will be removed in
Section~\ref{section5}.

\spb

If $S$ has no genus, then the set in which we take the supremum that
defines $D_{S^*}$ is the empty set. Hence, we deduce the following
direct consequence.

\begin{coro}
\label{c:rsn} Let $S$ be an open non-exceptional Riemann surface
with no genus, and $E=\cup_nE_n$ an $(r,s,N)$-uniformly separated
set in $S$. Then, $S^*:=S\setminus E$ is $\d^*$-hyperbolic if and
only if $S$ is $\d$-hyperbolic and $V_n\setminus E_n$ is
$k$-hyperbolic for every $n$ (with $d_{S^*}|_{V_n\setminus E_n}$).

Furthermore, if $V_n\setminus E_n$ is $k$-hyperbolic for every $n$,
then $\d^*$ (respectively $\d$) is a universal constant which only
depends on $r,s,N,k$ and $\d$ (respectively $r,s,N$ and $\d^*$).
\end{coro}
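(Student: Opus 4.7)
My plan is to reduce directly to Theorem~\ref{t:rsn} by showing that, under the no-genus hypothesis on $S$, the finiteness condition involving $D_{S^*}(\{V_n\}_n)$ becomes automatic, and indeed the supremum is taken over the empty set. Once this is established, both the hypotheses and the parameter dependencies of Theorem~\ref{t:rsn} collapse to the ones stated in the corollary.

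First I will argue that no two distinct boundary components of $V_n$ can belong to the same connected component of $S\setminus \inte V_n$. Each $V_n$ is, by definition of $N$-normal neighborhood, a compact connected bordered subsurface of $S$ without genus, whose boundary $\partial V_n$ is a disjoint union of finitely many simple closed curves. Since $S$ itself has no genus, $S$ is a planar surface and every simple closed curve in $S$ separates it into two components. Fix a boundary component $\eta_i^n$ of $V_n$; since $V_n\setminus\eta_i^n$ is connected, it lies entirely on one side of $\eta_i^n$, while the other side $W_i$ is contained in $S\setminus\inte V_n$ and has $\eta_i^n$ as its only boundary component coming from $\partial V_n$. Carrying this out for each index~$i$ shows that the connected components of $S\setminus\inte V_n$ are in bijection with the boundary curves of $V_n$, each component containing exactly one such curve on its frontier. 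Hence the condition
\[
\eta_i^n,\eta_j^n\ \text{lie in the same connected component of}\ S\setminus\inte V_n
\]
is never satisfied for $i\neq j$.

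Consequently, the set over which the supremum in Definition~\ref{def:D} is taken is empty, so with the usual convention we have $D_{S^*}(\{V_n\}_n) = -\infty$ (in any case, automatically finite and without providing any constraint on the distances inside $S^*$). Therefore the hypothesis ``$D_{S^*}(\{V_n\}_n)$ is finite'' in Theorem~\ref{t:rsn} is automatic, and the dependence of $\d^*$ (resp.\ $\d$) on $D_{S^*}(\{V_n\}_n)$ in the quantitative part disappears. Substituting these observations into Theorem~\ref{t:rsn} yields exactly the statement of Corollary~\ref{c:rsn}.

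I do not anticipate a serious obstacle here: the only non-routine ingredient is the planarity argument showing the $\eta_i^n$ separate the complement $S\setminus\inte V_n$ into pieces each containing a single boundary curve, and this is a standard fact about planar surfaces and simple closed curves. The rest is bookkeeping of the parameter dependencies carried over from Theorem~\ref{t:rsn}.
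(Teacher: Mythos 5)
Your proposal is correct and follows essentially the same route as the paper: the authors simply observe, just before stating the corollary, that when $S$ has no genus the set over which the supremum defining $D_{S^*}$ is taken is empty, and the result is then a direct consequence of Theorem~\ref{t:rsn}. Your contribution is to actually justify that observation with a planarity argument (each separating boundary curve $\eta_i^n$ cuts off its own component of $S\setminus\inte V_n$), which is a correct and worthwhile expansion of the one-line remark in the paper; the only nitpick is that calling $D_{S^*}=-\infty$ is a slightly odd way to record that the finiteness hypothesis is vacuously satisfied, but it does not affect the argument.
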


Finally we include a technical result about the Poincar\'e metric.

\begin{lema}
\label{l:cociente} \emph{(\cite[Lemma 3.1]{APR})} Let us consider an
open non-exceptional Riemann surface $S$, a closed non-empty subset
$C$ of $S$, and a positive number $\e$. If $S^*:=S\setminus C$, then
we have that $1 < L_{S^*}(\g)/L_S(\g) < \coth (\e/2)$, for every
curve $\g\subset S$ with finite length in $S$ such that
$d_S(\g,C)\ge\e$.
\end{lema}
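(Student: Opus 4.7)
The plan is to transport the question to the unit disk via the universal covering $\pi\colon \DD\to S$ and then apply the Schwarz--Pick monotonicity of the Poincar\'e metric. Setting $\widetilde C:=\pi^{-1}(C)$, a closed subset of $\DD$, the restriction $\pi\colon \DD\setminus\widetilde C\to S^*$ is again a holomorphic covering, and therefore a local isometry between the Poincar\'e metrics on these surfaces (componentwise, if $\DD\setminus\widetilde C$ is disconnected). Consequently, for every $z\in S^*$ and every lift $\tilde z\in\DD\setminus\widetilde C$,
\begin{equation*}
\frac{\lambda_{S^*}(z)}{\lambda_S(z)}\;=\;\frac{\lambda_{\DD\setminus\widetilde C}(\tilde z)}{\lambda_{\DD}(\tilde z)}\,,
\end{equation*}
where $\lambda_{\DD\setminus\widetilde C}$ is understood as the Poincar\'e density of the connected component of $\DD\setminus\widetilde C$ containing $\tilde z$. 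The whole statement thus reduces to a pair of comparisons in the disk.

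For the lower bound, $C\neq\varnothing$ gives $\DD\setminus\widetilde C\subsetneq\DD$, so the strict Schwarz--Pick lemma yields $\lambda_{\DD\setminus\widetilde C}(\tilde z)>\lambda_{\DD}(\tilde z)$ at every point, and integration produces $L_{S^*}(\g)>L_S(\g)$. For the upper bound, the fact that $\pi$ is a local hyperbolic isometry gives $d_{\DD}(\tilde z,\widetilde C)=d_S(z,C)\ge\e$ for every $z\in\g$, so the open hyperbolic ball $B:=B_{\DD}(\tilde z,\e)$ is contained in the component $U$ of $\DD\setminus\widetilde C$ containing $\tilde z$. Schwarz--Pick applied to the inclusion $B\subseteq U$ gives $\lambda_U(\tilde z)\le\lambda_B(\tilde z)$, and a direct computation in the model (where $B$ becomes the Euclidean disk of radius $\tanh(\e/2)$ centered at the origin) produces $\lambda_B(0)=2\coth(\e/2)$, while $\lambda_{\DD}(0)=2$. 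Dividing gives $\lambda_{S^*}(z)/\lambda_S(z)\le\coth(\e/2)$ pointwise, and integration along $\g$ yields $L_{S^*}(\g)\le\coth(\e/2)\,L_S(\g)$.

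The strict inequality in the upper bound requires a brief extra argument: it suffices to check that $B\subsetneq U$ is a proper inclusion at some (in fact almost every) point of $\g$, since then the strict Schwarz--Pick lemma applies. The inclusion could only fail to be proper if $\widetilde C$ contained the entire hyperbolic circle $\partial B$, a configuration incompatible with the deck-group invariance of $\widetilde C$ whenever $S$ has non-trivial fundamental group, and incompatible in the simply connected case with $\g$ being a non-degenerate curve satisfying $d_S(\g,C)\ge\e$ (as such a $\g$ would have to reduce to the single center of $B$). This handling of strictness is the one genuinely delicate point of the proof; the non-strict forms of both inequalities are immediate consequences of the universal-cover/Schwarz--Pick recipe.
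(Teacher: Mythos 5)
This lemma is quoted by the paper from \cite[Lemma 3.1]{APR}; the paper itself offers no proof, so there is nothing internal to compare against. Judged on its own terms, your proposal is the natural one: transport to the universal cover and apply Schwarz--Pick. The reduction to $\DD$, the pointwise identity $\lambda_{S^*}(z)=\lambda_U(\tilde z)$ with $U$ the component of $\DD\setminus\widetilde C$ containing a lift, the strict lower bound from $U\subsetneq\DD$, and the non-strict upper bound from $B_{\DD}(\tilde z,\e)\subseteq U$ together with $\lambda_{B_{\DD}(0,\e)}(0)=2\coth(\e/2)$ are all correct.

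The one soft spot is the strictness of the upper bound. You split into two cases according to whether $\pi_1(S)$ is trivial, and in the non-trivial case you assert that $\partial B\subset\widetilde C$ is ``incompatible with the deck-group invariance of $\widetilde C$.'' I do not see why: deck-invariance only tells you that every $G$-translate of $\partial B$ also lies in $\widetilde C$, and these translates can be disjoint circles bounding disjoint components; no contradiction is forthcoming from invariance alone. Fortunately the argument you reserve for the simply connected case is the one that actually works in general and renders the $\pi_1$ dichotomy unnecessary. Indeed, if $B=B_{\DD}(\tilde z,\e)$ were a whole component $U$, then $\partial B\subset\widetilde C$, so any other point $z'\in\g$, lifted to $\tilde z'\in U=B$, would satisfy $\e\le d_{\DD}(\tilde z',\widetilde C)\le d_{\DD}(\tilde z',\partial B)=\e-d_{\DD}(\tilde z,\tilde z')$, forcing $\tilde z'=\tilde z$; hence $\g$ would be a single point, excluded since $0<L_S(\g)<\infty$. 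Strict inequality at one point, combined with continuity of the densities, then gives strictness of the integral bound. So the proof is correct once the deck-group remark is dropped and the degeneracy argument is applied uniformly.
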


\spb

\section{Stability of hyperbolicity.}\label{section5}

The leading idea in this Section is that some quantitative
information that seems to influence hyperbolicity of a surface
actually is irrelevant, let us see an example. If $S$ is an open
Riemann surface and $p_1,p_2\in S$, then several conformal
invariants of $S^*=S\setminus\{ p_1,p_2\}$ (e.g. the exponent of
convergence, the first eigenvalue of the Laplace-Beltrami operator
and the isoperimetric constant) degenerate when $p_2$ tends to
$p_1$; in contrast, the hyperbolicity constant stays bounded
(stable) as $p_1$ approaches~$p_2$.

\spb

In this Section we only consider surfaces without genus, so that in
particular the number $D_{S^*}$ from Definition~\ref{def:D} is zero.
We begin by proving Theorem~\ref{t:finite} as a surprising
consequence of Theorem~\ref{t:balls} (on the topology of balls).
Then Corollary~\ref{c:finite}, an immediate consequence of
Theorem~\ref{t:finite}, is used to prove Theorem~\ref{t:clasef}.

In its turn, Theorem~\ref{t:clasef} is fundamental for the proof of
the main Theorem in Section~\ref{section:6}.

\begin{teo}
\label{t:finite} Let us consider a $\d$-hyperbolic non-exceptional
Riemann surface $S$ with no genus, and pairwise disjoint simply
connected compact sets $\{E_n\}_{n=1}^N$ in $S$. We define
$S^*:=S\setminus \cup_{n=1}^N E_n$. Assume that for each
$n=1,\dots,N,$ there exists a simple closed curve $g_n$ `surrounding
just $E_n$' with $L_{S^*}(g_n) \le l$. Then there exists a constant
$\d^*$, which only depends on $\d$, $N$ and $l$, such that $S^*$ is
$\d^*$-hyperbolic.
\end{teo}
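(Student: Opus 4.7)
The strategy is to apply Corollary~\ref{c:rsn}: I will exhibit the $E_n$'s (possibly after grouping those that are close in $S$) as a uniformly separated family in $S$, and verify that each of the resulting neighborhoods is hyperbolic with constants depending only on $\d$, $N$ and $l$.

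First, by Lemma~\ref{l:cociente}, $L_S(g_n)\le L_{S^*}(g_n)\le l$, so each $g_n$ is a short simple closed curve in $S$ bounding a topological disk $D_n$ (because $S$ has no genus). Since $D_n$ lifts isometrically into the hyperbolic plane $\DD$, the bound $L_S(g_n)\le l$ forces $\diam_S(D_n)\le C_1(l)$, and a fortiori $\diam_S(E_n)\le C_1(l)$. Next fix a large threshold $r_0=r_0(\d,N,l)$, declare $n\sim m$ when $d_S(E_n,E_m)\le r_0$, and pass to the transitive closure: this partitions the indices into clusters $I_1,\dots,I_M$ with $M\le N$. The sets $F_j:=\bigcup_{n\in I_j}E_n$ then lie in metric balls of $S$-radius at most $N(r_0+2C_1(l))$, while distinct clusters are $r_0$-separated in $S$.

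For each cluster $I_j$ choose $p_j\in F_j$ and apply Theorem~\ref{t:balls} to the ball $B_S(p_j,R_j)$, with $R_j$ large enough to contain $F_j$ and to leave a collar of some fixed width $r\in(0,r_0/3)$ around $F_j$. The theorem produces a radius $R_j'\in(R_j,R_j+c)$ at which $\partial B_S(p_j,R_j')$ has $L_S$-length bounded by an explicit function of $R_j'$ and $\pi_1\bigl(B_S(p_j,R_j')\bigr)$ is generated by $n(R_j')$ elements, also bounded. A small surgery that ``fills in'' those connected components of $S\setminus B_S(p_j,R_j')$ which do not meet $F$ (each enclosed by one of the controlled boundary curves) upgrades $B_S(p_j,R_j')$ to an $N'$-normal neighborhood $V_j$ of $F_j$ with $V_j\setminus F_j$ connected, $L_S(\p V_j)\le s$, $d_S(\p V_j,F_j)\ge r$, and $d_S(V_j,V_{j'})\ge r$ for $j\ne j'$; the constants $N',r,s$ depend only on $\d,N,l$. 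Therefore $\{F_j\}$ is $(r,s,N')$-uniformly separated in $S$. Moreover each $V_j\setminus F_j$ is a bordered non-exceptional Riemann surface of finite type with no genus, bounded $|\chi|$, and with every outer loop controlled in length: the loops around each $E_n\in F_j$ have $L_{S^*}$-length $\le l$ (witnessed by $g_n$), and each component of $\p V_j$ has $L_{S^*}$-length $\le s\coth(r/2)$ by Lemma~\ref{l:cociente}. Hence $V_j\setminus F_j\in\S(a',l')$ with bounded constants, and Theorem~\ref{t:clases} yields $k$-hyperbolicity with $k=k(\d,N,l)$.

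Finally, Corollary~\ref{c:rsn} applied to the uniformly separated family $\{F_j\}$ in $S$ gives that $S^*=S\setminus\bigcup_j F_j$ is $\d^*$-hyperbolic with $\d^*$ depending only on $\d,N,l$. The most delicate step is the construction of the $V_j$'s above: turning the metric ball produced by Theorem~\ref{t:balls} into an $N'$-normal neighborhood with connected complement of $F_j$, while preserving all the boundary-length, boundary-distance, and inter-cluster separation estimates, requires careful surgery and is where the finiteness of $N$ and the sharpness of the topology-of-balls bound are used. A secondary technical issue is the passage between the intrinsic $S^*$-metric on $V_j\setminus F_j$ (asked for by Corollary~\ref{c:rsn}) and the Poincar\'e metric of $V_j\setminus F_j$ as a bordered Riemann surface (delivered by Theorem~\ref{t:clases}); under the uniform boundary control secured above, this comparison is standard via Lemma~\ref{l:cociente} and the Collar Lemma.
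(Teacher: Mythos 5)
The central obstacle in this theorem is that the $E_n$'s may be arbitrarily close to one another in $S$, so they cannot directly be exhibited as a uniformly separated family. You recognize this and try to fix it by clustering, but the fix does not close the gap. After declaring $n\sim m$ when $d_S(E_n,E_m)\le r_0$ and taking the transitive closure, distinct clusters are only $r_0$-separated in $S$, while a cluster $F_j$ with $|I_j|\ge 2$ can have diameter up to roughly $(N-1)r_0+NC_1(l)$, which already exceeds $r_0$. The ball $B_S(p_j,R_j')$ you build must contain $F_j$ with a collar, so $R_j'$ is at least on the order of $\diam_S(F_j)$, which is larger than the inter-cluster separation whenever $N\ge 2$. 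Consequently the ball around cluster $j$ may well swallow sets $E_m$ belonging to a different cluster, and your ``surgery'' (filling in components of $S\setminus B_S(p_j,R_j')$ not meeting $F$) does nothing about pieces of $F$ that are already inside the ball. So the family $\{F_j\}$ cannot be shown to be uniformly separated with constants depending only on $\d,N,l$. This is not a detail to be cleaned up: it is an intrinsic scaling problem of the one-shot clustering strategy.

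The paper avoids this entirely by arguing by induction on $N$: it first proves the case $N=1$ (one disk $D\supset E_1$ of $S$-diameter $\le l/2$, one ball $V_1=\overline{B_S(p,l')}$ with controlled topology via Theorem~\ref{t:balls}, then Theorem~\ref{t:clases} and Corollary~\ref{c:rsn}), and then removes the sets one at a time, each time applying the $N=1$ case with the already-punctured surface $S\setminus(E_1\cup\cdots\cup E_{N-1})$ as the new ambient surface. That way the ``uniformly separated'' machinery is only ever invoked with a single set, so no separation constant needs to compete with cluster size. If you keep your $N=1$ outline (which is essentially the paper's, including the use of Theorem~\ref{t:balls} to bound the number of boundary components of the ball) and replace the clustering step by this induction, the argument goes through.

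Two smaller remarks. First, you do not need Lemma~\ref{l:cociente} to get $L_S(g_n)<L_{S^*}(g_n)$; this is the elementary monotonicity of the Poincar\'e density under removal of a set, and the inequality is strict. Second, the ``passage'' you flag at the end between $d_{S^*}|_{V_j\setminus F_j}$ and ``the Poincar\'e metric of $V_j\setminus F_j$ as a bordered Riemann surface'' is a non-issue: by the paper's definition of bordered non-exceptional Riemann surface, the metric on $V_j\setminus F_j$ \emph{is} the one induced from the ambient open surface $S^*$, so Theorem~\ref{t:clases} is already stated for exactly the metric that Corollary~\ref{c:rsn} asks about.
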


\begin{obs}
By $g_n$ `surrounding just $E_n$' we mean that $g_n$ is
homotopically trivial in $S$, $g_n$ surrounds $E_n$ and $g_n$ does
not surround $E_k$ for $k\neq n$ ($g_n$ is `freely homotopic' to
$E_n$). Note also that $g_n \cap (\cup_k E_k)=\varnothing$, since
$L_{S^*}(g_n) < \infty$.
\end{obs}

\begin{proof}
First we prove the case $N=1$.

We have $L_{S}(g_1) < L_{S^*}(g_1) \le l$.
The curve $g_1$ surrounds a simply connected open set
$D\subset S$ with $E_1 \subset D$; then we can lift $D$ to $\tilde
D \subset \DD$ and given $z,w\in \tilde{D}$, we consider the
infinite geodesic $\eta$ in $\DD$ joining $z,w$; the geodesic
$\eta$ meets $\p \tilde D$ in $z',w'$ with $[z,w]\subset [z',w']$;
therefore, $d_{\DD}(z,w) \le
d_{\DD}(z',w') \le L_{\DD}(\tilde{g_1})/2 = L_{S}(g_1)/2 \le l/2$,
and $\diam_{S}(D) \le \diam_{\DD}(\tilde{D}) \le l/2$.
Hence, $\diam_{S}(E_1) \le l/2$.

Let us fix any $p\in E_1$; then $E_1 \subset D \subset \overline{B_S(p,l/2)}$.
Since $K\equiv -1$, by Theorem \ref{t:balls} we know that there
exists $l'\in [l,l+1]$ such that $\p B_S(p,l')$ is a union of simple
closed curves and
$$
\hbox{rank}\,H_1 \big(B_S(p,l')\big) \le \frac{\sinh(l+1)}{\sinh 1}
< e^{l}/(1-e^{-2}) < 2 \, e^l .
$$
We define $V_1$ as the closure of the ball $B_S(p,l')$; then $g_1$
is contained in $V_1$. Consequently, $d_S(E_1, \p V_1) \ge l/2$. We
also have $L_S(\p V_1) \le L_\DD(B_\DD(0,l')) = 2 \pi \sinh l'\le 2
\pi \sinh (l+1)$. The boundary $\partial V_1$ has at most $1+2e^l$
connected components, moreover $V_1$ has no genus because the
ambient surface $S$ has zero genus by hypothesis. All this implies
that $V_1$ is an $(1+2e^l)$-normal neighborhood of $E_1$ in $S$, and
$E_1$ is therefore an $(l/2, 2 \pi \sinh (l+1), 1+2e^l)$-uniformly
separated set in $S$.

We check now that $V_1^*:=V_1 \setminus E_1$ is hyperbolic with the
intrinsic distance $d_{S^*}|_{V_1^*}$ induced on $V_1^*$ by the Poincar\'e
metric of $S^*=S \setminus E_1$. By Lemma \ref{l:cociente}, since
$d_S(E_1, \p V_1) \ge l/2$,
$$
L_{S^*}(\p V_1) \le L_{S}(\p V_1) \coth (l/4) \le 2 \pi \sinh (l+1)
\coth (l/4) \,.
$$
Each component of $\partial V_1 =\partial V_1^*$ gives rise to an
outer loop in $V_1^*$. If $E_1$ is a single point then $V_1^*$ has a
puncture at $E_1$; otherwise $V_1^*$ has one additional outer loop
freely homotopic to $g_1$. In any case the sum of the number of
outer loops plus the number of punctures in $V_1^*$ is at most
$2+2e^l$. The $S^*$-length of the outer loops coming from $\partial
V_1$ is less than or equal to $2 \pi \sinh (l+1) \coth (l/4)$.

Since $L_{S^*}(g_1) \le l$, if $E_1$ is not a puncture then the
outer loop in $V_1^*$ homotopic to $g_1$ has length less than or
equal to $l<2 \pi \sinh (l+1) \coth (l/4)$
(since $g_1$ is contained in $V_1^*$).
Consequently $V_1^* \in
\S(2e^l ,2 \pi \sinh (l+1) \coth (l/4))$, and Theorem \ref{t:clases}
says that there exists a constant $\d_1$, which only depends on $l$,
such that $V_1^*$ is $\d_1$-hyperbolic.

The hypothesis of $S$ having genus zero allows us to use Corollary
\ref{c:rsn}, hence there exists a constant $\d_1^*$, which only
depends on $\d_1$ and $l$, such that $S^*$ is $\d_1^*$-hyperbolic.
This finishes the proof in the case $N=1$.

\spb

Now we prove the result by induction on $N$. We have proved it for
$N=1$. Assume that it holds for $N-1$ (note that we also have
$L_{S\setminus (E_1\cup \cdots\cup E_{N-1})}(g_n) < L_{S\setminus
(E_1\cup \cdots\cup E_{N})}(g_n) \le l$ for $n=1,\dots,N-1$).
Consequently, $S\setminus (E_1\cup \cdots\cup E_{N-1})$ is
$\d_{N-1}$-hyperbolic, where $\d_{N-1}$ only depends on $\d$, $N$
and $l$.

Since $g_N$ is a simple closed curve surrounding just $E_N$, with
$L_{S^*}(g_N) \le l$, the result for $N=1$ gives that $S^*$ is
$\d^*$-hyperbolic, with $\d^*$ a constant which only depends on
$\d$, $N$ and $l$.
\end{proof}

In Theorem~\ref{t:infinite} we shall extend Theorem~\ref{t:finite}
to the case of infinitely many sets $E_n$. It needs an extra
hypothesis: that the $E_n$'s get neither too small nor too large as
$n\to\infty$.

\vspace{3mm}

Since any puncture can be surrounded by arbitrarily short closed
curves, we deduce the following result:

\begin{coro}
\label{c:finite0} Let us consider a $\d$-hyperbolic non-exceptional
Riemann surface $S$ with no genus, and points $\{p_n\}_{n=1}^N$ in
$S$. We define $S^*:=S\setminus \{p_1,\dots,p_N\}$. Then there
exists a constant $\d^*$, which only depends on $\d$ and $N$, such
that $S^*$ is $\d^*$-hyperbolic.
\end{coro}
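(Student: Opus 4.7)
The plan is to derive the corollary directly from Theorem~\ref{t:finite} by checking that in our situation we can choose the closing curves $g_n$ to have \emph{uniformly small} Poincar\'e length in $S^*$, say $L_{S^*}(g_n)\leq l$ for a fixed constant $l$ that can be chosen as small as we wish (e.g.\ $l=1$).

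First, I would observe that each singleton $E_n=\{p_n\}$ is a simply connected compact subset of $S$, so the hypotheses of Theorem~\ref{t:finite} on the $E_n$'s are automatic, and $S^*=S\setminus\bigcup_{n=1}^N\{p_n\}$ fits the setting. The only thing left to verify is the existence of simple closed curves $g_n\subset S^*$, each one surrounding just $p_n$ and with controlled $S^*$-length.

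Next, I would produce the curves $g_n$. Since $S^*=S\setminus\{p_1,\dots,p_N\}$ is an open non-exceptional Riemann surface (it is obtained from a non-exceptional $S$ by removing finitely many points, so the universal cover is still $\mathbb{D}$), each $p_n$ is a puncture of $S^*$ in the sense of the Poincar\'e metric. Near any puncture, the Poincar\'e metric has the form of a cusp: a tubular neighborhood isometric to $\{z\in\mathbb{H}\;:\;\operatorname{Im} z>y_0\}/\langle z\mapsto z+1\rangle$. The horocyclic curves $\{\operatorname{Im} z = y\}$ project to simple closed curves of Poincar\'e length $1/y$, which tends to zero as $y\to\infty$. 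In particular, for any prescribed $l>0$ (take $l=1$ for concreteness) and every $n$, I can pick a simple closed curve $g_n\subset S^*$, lying inside a small cusp neighborhood of $p_n$, with $L_{S^*}(g_n)\leq l$. These curves are mutually disjoint (the cusp neighborhoods can be taken disjoint), each $g_n$ is homotopically trivial in $S$ (it bounds a disk containing only $p_n$), surrounds $p_n$, and surrounds no other~$p_k$.

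Finally, I would invoke Theorem~\ref{t:finite} with these data: since $S$ is $\delta$-hyperbolic, $N$ is fixed, and the $g_n$ have length $\leq l=1$, the theorem produces a constant $\delta^*$ depending only on $\delta$, $N$ and $l$; since $l=1$ is an absolute choice, $\delta^*$ depends only on $\delta$ and $N$, as required. The only delicate step is the existence of the uniformly short surrounding curves $g_n$, but this is essentially the standard cusp structure of the Poincar\'e metric near a puncture, so no serious obstacle arises.
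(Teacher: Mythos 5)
Your proposal is correct and takes essentially the same approach as the paper: the paper's proof is the single remark that ``any puncture can be surrounded by arbitrarily short closed curves,'' which lets one apply Theorem~\ref{t:finite} with a fixed choice of $l$ (the paper implicitly fixes some $l$; you fix $l=1$). Your write-up simply spells out the cusp geometry behind that remark, so there is no substantive difference.
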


Corollary \ref{c:finite0} can be viewed as a result on stability
of hyperbolicity: $S^*$ is $\d^*$-hyperbolic independently of how
close or far apart the points $\{p_1,\dots, p_N\}$ are from one
another.

\spb

Since $\DD$ is hyperbolic, Theorem~\ref{t:finite} also implies the
following.

\begin{coro}
\label{c:finite} Let us consider pairwise disjoint simply connected
compact sets $\{E_n\}_{n=1}^N$ in $\DD$ and $\DD^*:=\DD \setminus
\cup_{n=1}^N E_n$. Assume that for each $n=1,\dots,N,$ there exists
a simple closed curve $g_n$ surrounding just $E_n$ with
$L_{\DD^*}(g_n) \le l$. Then there exists a constant $\d^*$, which
only depends on $N$ and $l$, such that $\DD^*$ is $\d^*$-hyperbolic.
\end{coro}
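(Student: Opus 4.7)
The plan is to deduce Corollary \ref{c:finite} as an immediate specialization of Theorem \ref{t:finite} to the ambient surface $S=\DD$. To this end, I would first recall that $\DD$ equipped with its Poincar\'e metric is a complete, simply connected Riemannian surface of constant Gaussian curvature $-1$, hence a non-exceptional Riemann surface with no genus in the sense of Section 4. By Example $(2)$ in the list of examples following Definition \ref{hype}, every complete simply connected Riemannian manifold with sectional curvature bounded above by $-k^2<0$ is Gromov hyperbolic; applied to $\DD$, this gives a hyperbolicity constant $\d_0$ which is a purely numerical, absolute constant (independent of any parameter occurring in our problem).

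Having pinned down this $\d_0$, the remaining hypotheses of Theorem \ref{t:finite} with $S=\DD$ are already present in the statement of the corollary: the sets $\{E_n\}_{n=1}^N$ are pairwise disjoint, simply connected and compact in $\DD$; each $g_n$ is a simple closed curve surrounding just $E_n$; and $L_{\DD^*}(g_n)\le l$. Invoking Theorem \ref{t:finite} then yields a constant $\d^*$, depending only on $\d_0$, $N$ and $l$, such that $\DD^*=\DD\setminus\bigcup_{n=1}^N E_n$ is $\d^*$-hyperbolic. Because $\d_0$ itself is absolute, $\d^*$ in fact depends only on $N$ and $l$, as claimed.

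There is essentially no obstacle beyond the packaging: the corollary is formally a one-line application of Theorem \ref{t:finite}, and the only substantive observation is that the ambient hyperbolicity constant drops out of the parameter list precisely because the Poincar\'e disk is universally hyperbolic. In particular, no additional verification of uniform separation, no new topology-of-balls estimate via Theorem \ref{t:balls}, and no extra membership in the classes $\F(a,l)$ or $\S(a,l)$ is needed here: all such work has already been performed inside the proof of Theorem \ref{t:finite}.
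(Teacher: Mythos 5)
Your proof is correct and is exactly what the paper does: the paper precedes the corollary with the single remark "Since $\DD$ is hyperbolic, Theorem~\ref{t:finite} also implies the following," which is precisely your observation that $\DD$ is a non-exceptional Riemann surface of genus zero with an absolute hyperbolicity constant $\d_0$, so the dependence on $\d$ in Theorem~\ref{t:finite} drops out.
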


Finally we prove the following improvement of Theorem
\ref{t:clases}. It is surprising since we do not require anything
about one of the outer loops.

\begin{teo}
\label{t:clasef} For each $a$ and $l$, there exists a constant $\d$,
which just depends on $a$ and $l$, such that every $S\in \F(a,l)$ is
$\d$-hyperbolic.
\end{teo}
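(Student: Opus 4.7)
The plan is to reduce Theorem~\ref{t:clasef} to Corollary~\ref{c:finite} by conformally realizing the ambient surface as a disk minus finitely many controlled compact sets. If every outer loop of $S$ has length at most $l$, then $S\in\S(a,l)$ and Theorem~\ref{t:clases} finishes the argument. Otherwise $S$ has a unique outer loop $\g_0$ with $L_S(\g_0)>l$; since $L_S(\p S)\le l$ bounds the length of every boundary component and hence of its freely homotopic geodesic representative, and since cusps contribute zero-length outer loops, $\g_0$ must be the boundary geodesic of a funnel end of $S$.

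Next I apply Proposition~\ref{RS} to embed $S$ isometrically into an open non-exceptional Riemann surface $R$ of genus zero with $\chi(R)\ge\chi(S)\ge -a$. Inspecting the cases in its construction, each boundary component of $S$ is either absorbed (cases (1) or (2)) or replaced in $R$ by a cusp (cases (3) or (4)) or a funnel (case (5)), and in every case the newly produced outer loop has length at most $L_S(\p S)\le l$. Thus $R$ is a planar open non-exceptional Riemann surface of finite type with at most $a+2$ ends, all carrying outer loops of length at most $l$ except the one containing $\g_0$.

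Being planar of finite type and non-exceptional, $R$ is conformally equivalent to $\hat{\CC}\setminus K$ for a compact set $K$ with $N+1\le a+2$ connected components. Applying the Riemann mapping theorem to $\hat{\CC}\setminus K_0$, where $K_0$ is the component corresponding to the funnel end containing $\g_0$, identifies $R$ conformally, and therefore isometrically in the Poincar\'e metric, with $\DD\setminus\bigcup_{i=1}^N E_i$ for simply connected compact sets $E_i\subset\DD$. Each short outer loop of $R$ corresponds to a simple closed curve of length at most $l$ in $\DD\setminus\bigcup E_i$ surrounding exactly one $E_i$; Corollary~\ref{c:finite} then produces a constant $\d'=\d'(a,l)$ for which $\DD\setminus\bigcup E_i$, and hence $R$, is $\d'$-hyperbolic.

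To transfer hyperbolicity from $R$ back to $S$, apply the tree decomposition $R=S\cup\bigcup_jT_j$, where each $T_j$ is a piece glued by Proposition~\ref{RS} to $S$ along a boundary component $\eta_j$ of length at most $l$. Then $\diam_S(\eta_j),\diam_{T_j}(\eta_j)\le l/2$, and each $T_j$ (a disk, cusp or short funnel with outer loop of length at most $l$) is uniformly hyperbolic with constant depending only on $l$. Theorem~\ref{t:treedecomp} then yields a hyperbolicity constant $\d$ for $S$ depending only on $\d'$, $l$ and $a$, hence only on $a$ and $l$. The main obstacle is the conformal uniformization: one must choose the Riemann mapping so that the component $K_0$ used to produce $\DD$ is precisely the one corresponding to the long funnel end of $R$, ensuring that only the short outer loops give rise to the compact sets $E_i$ together with their short surrounding curves in $\DD\setminus\bigcup E_i$.
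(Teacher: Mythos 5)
Your proposal is correct and takes essentially the same route as the paper: dispose of the case where all outer loops are short via Theorem~\ref{t:clases}, realize the genus-zero surface (after passing to $R_S$ via Proposition~\ref{RS}) as $\DD\setminus\bigcup E_i$ by the Riemann mapping theorem with the long funnel end sent to $\p\DD$, apply Corollary~\ref{c:finite}, and then transfer hyperbolicity from $R_S$ to the bordered $S$ via the tree decomposition of Theorem~\ref{t:treedecomp}. The paper organizes this in the opposite order (open case first, then bordered reduced to open), but the content is the same; one small remark is that your final step does not actually need to verify uniform hyperbolicity of the glued-on pieces $T_j$, since the ``only if'' direction of Theorem~\ref{t:treedecomp} already yields hyperbolicity of the individual piece $S$ from the hyperbolicity of the whole $R_S$.
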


\begin{obs}
It is interesting to note that it is not possible to obtain a similar result to
Theorem \ref{t:clasef} if all the outer loops except two have
bounded length, as the following example shows:
if $Y_t$ is the $Y$-piece with simple closed geodesics
$\g_1\cup \g_2\cup \g_3 = \p Y_t$
such that $L(\g_1)=1$ and $L(\g_2)=L(\g_3)=t$,
then $\lim_{t\to\infty} \d(Y_t)=\infty$.
\end{obs}

\begin{proof}
We first prove the result for open surfaces.

If $S\in \S(a,l)$, then Theorem \ref{t:clases} gives the result;
this happens in particular when $a=0$. Therefore, we can assume that
$\chi (S)<0$, that an outer loop $\g_0$ satisfies $L_S(\g_0)>l$, and
that any other outer loop $\g_j$ $(j=1,\dots,N)$ verifies
$L_S(\g_j)\le l$. From $-a\leq \chi(S)=2-(N+1)<0$ we infer $2\leq
N\leq a+1$.

For open surfaces the conformal structure and the Riemann metric
determine each other, so we can consider one structure or the other
to our convenience. Having zero genus, $S$ can be represented as a
plane domain $S\subset \CC$ with $S=\O \setminus E_1\cup \cdots \cup
E_N$, $\O$ a simply connected open set, $E_1, \dots, E_N$ simply
connected compact sets, such that $\g_0$ surrounds $E_1\cup \cdots
\cup E_N$ and $\g_j$ surrounds just $E_j$ ($j=1, \dots ,N$). The
hypothesis $L_S(\g_0)>l$ implies that $\g_0$ is not a puncture and
that $\O\neq \CC$; then, by the Riemann mapping Theorem, we can
assume that $S=\DD \setminus E_1\cup \cdots \cup E_N$. Since we have
$N \le a+1$ and $L_S(\g_j)\le l$ ($j=1, \cdots , N$), by Corollary
\ref{c:finite} there exists a constant $\d$, which just depends on
$a$ and $l$, such that $S$ is $\d$-hyperbolic.

\spb

We now prove the result for bordered surfaces.

The idea of the proof is to see a bordered surface in $\F(a,l)$ as a
subset of an open surface in $\F(a,l)$, and then make use of Theorem
\ref{t:treedecomp}.

Given the bordered surface $S$, consider the open surface $R_S$ from
Proposition~\ref{RS}. If $R_S$ is the unit disk then it is
$\log(1+\sqrt2\,)$-thin (see e.g. \cite[p.130]{An}).

Assume now that $R_S$ is not the unit disk. Outer loops in $S$ are
metric geodesics (recall Definition~\ref{def:Rips}), perhaps not
Riemannian geodesics, but they give rise in $R_S$ to Riemannian
outer loops of no greater length (including punctures), or they just
shrink to points in $R_S$. Hence $R_S\in\F (a,l)$, and by the open
case there is a constant $\d_1$, just depending on $a$ and $l$, such
that $R_S$ is $\d_1$-hyperbolic.

The closure of $R_S\setminus S$ is the union of simply or doubly
connected bordered surfaces $R^1,\dots,R^s,$ with $s\le a+2$, and
the condition $L_S(\p S)\le l$ implies that $\{S,R^1,\dots,R^s\}$ is
an $l$-tree decomposition of $R$. Then, by Theorem
\ref{t:treedecomp}, there exists a constant $\d$ that depends only
on $a$ and $l$ and such that $S$ is $\d$-hyperbolic.
\end{proof}

\section{Main results on hyperbolicity}\label{section:6}

Now, taking advantage of all the tools developed in the previous
Sections, we present the main results on hyperbolicity of the paper.
The first Theorem we present improves Theorem \ref{t:rsn} by
removing the uniform hyperbolicity hypothesis, which is usually the
hardest one to check.

\begin{teo}
\label{t:main} Let $S$ be an open non-exceptional Riemann surface
and $E=\cup_n E_n$ a $(r,s,N)$-uniformly separated set in $S$, with
$E_n$ simply connected for every $n$. Then, $S^*:=S \setminus E$ is
$\d^*$-hyperbolic if and only if $S$ is $\d$-hyperbolic and the
number $D_{S^*}(\{V_n\}_n)$ from Definition~\ref{def:D} is finite.

Furthermore, $\d^*$ (respectively $\d$) is a universal constant
which only depends on $r,s,N,D_{S^*}(\{V_n\}_n)$ and $\d$
(respectively $\d^*$).
\end{teo}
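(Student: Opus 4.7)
The plan is to reduce the statement to Theorem~\ref{t:rsn} by proving that, under the simple-connectivity hypothesis on the $E_n$, the family $\{V_n\setminus E_n\}_n$ is automatically uniformly hyperbolic with a constant depending only on $r,s,N$. The implication ``$\Rightarrow$'' is free: if $S^*$ is $\d^*$-hyperbolic then Theorem~\ref{t:rsn} yields $S$ $\d$-hyperbolic and $D_{S^*}(\{V_n\}_n)$ finite, with $\d$ depending only on $r,s,N,D_{S^*}$ and $\d^*$. All the work lies in the converse.

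For each $n$, the set $V_n\setminus E_n$, equipped with the intrinsic metric $d_{S^*}|_{V_n\setminus E_n}$, is a bordered non-exceptional Riemann surface inside $S^*$: indeed $V_n\setminus E_n=S^*\setminus(S\setminus V_n)$, and $S\setminus V_n$ is open in $S^*$ because the $(r,s,N)$-separation forces $V_n\cap E=E_n$. I would now control its topology and geometry. Since $V_n$ has no genus and its boundary $\p V_n$ consists of $N_n\le N$ simple closed curves, and because $E_n$ is simply connected while $V_n\setminus E_n$ is connected, removing $E_n$ produces exactly one additional end of $V_n\setminus E_n$ (a puncture if $E_n$ is a point, otherwise a single end homeomorphic to $S^1\times[0,\infty)$). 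Hence $V_n\setminus E_n$ has no genus and, by Definition~\ref{chi}, $\chi(V_n\setminus E_n)=1-N_n\in[1-N,0]$. On the geometric side, $d_S(\p V_n,E)\ge r$ (combining $d_S(\p V_n,E_n)\ge r$ with $d_S(\p V_n,V_m)\ge d_S(V_n,V_m)\ge r$ for $m\neq n$), so Lemma~\ref{l:cociente} gives
\[
L_{S^*}(\p V_n)\;\le\;L_S(\p V_n)\coth(r/2)\;\le\;s\coth(r/2)\;=:\;l_1.
\]
The outer loops of $V_n\setminus E_n$ therefore split into two kinds: the $N_n$ outer loops associated to the components of $\p V_n$, each of length at most $l_1$; and at most one outer loop coming from the $E_n$-end, which is either a puncture (length $0$) or a loop of possibly uncontrolled length. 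We conclude $V_n\setminus E_n\in\F(N-1,l_1)$.

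Theorem~\ref{t:clasef} applied to $\F(N-1,l_1)$ now produces a constant $k=k(r,s,N)$ such that every $V_n\setminus E_n$ is $k$-hyperbolic. Plugging this into the $\Leftarrow$ direction of Theorem~\ref{t:rsn} finishes the converse and gives the dependence of $\d^*$ on $r,s,N,D_{S^*}$ and $\d$ claimed in the statement. The principal difficulty is that the outer loop around $E_n$ in $V_n\setminus E_n$ cannot in general be controlled by $r,s,N$, so the older Theorem~\ref{t:clases} is inadequate and one really needs the flexibility of Theorem~\ref{t:clasef}, which permits exactly one unbounded outer loop. Verifying the ``exactly one new end'' assertion (and thus membership in $\F(N-1,l_1)$) is the step where the simple-connectivity hypothesis on each $E_n$ enters essentially.
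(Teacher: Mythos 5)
Your proof is correct and follows essentially the same route as the paper: bound $L_{S^*}(\p V_n)$ via Lemma~\ref{l:cociente}, observe that simple connectivity of $E_n$ yields a single extra end so $V_n\setminus E_n\in\F(N-1,s\coth(r/2))$, invoke Theorem~\ref{t:clasef} for uniform hyperbolicity of the pieces, and conclude with Theorem~\ref{t:rsn}. The only (cosmetic) difference is that the paper splits off the case where $\p V_n$ is connected and handles it with the older Theorem~\ref{t:clases}, whereas you correctly note that Theorem~\ref{t:clasef} covers both cases at once.
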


\begin{obs}

\begin{enumerate}
\item Recall that if $E_n$ is simply connected, then it gives rise
to either a puncture (if $E_n$ is a single point) or a funnel (if
$E_n$ is not a single point) in $S^*$. \item Note that we do not
require anything about $\diam_S E_n$; in particular, we allow the
case $\sup_n \diam_S E_n= \infty$; in this case  the funnels $F_n$
in $S^*$ corresponding to $E_n$ verify $\sup_n L_{S^*}(\p F_n) \ge
\sup_n L_{S}(\p F_n) \ge \sup_n \diam_S E_n = \infty$, which makes
the study of the hyperbolicity of $S^*$ more difficult. \item
Theorem \ref{t:main} is a known result in the particular case when
every $E_n$ is a single point (see \cite[Theorem 3.1]{PRT2}).
\end{enumerate}
\end{obs}

\begin{proof}
In order to apply Theorem \ref{t:rsn}, we just need to prove that
$V_n^*:=V_n\setminus E_n$ is $k$-hyperbolic for every $n$, where $k$
is a constant which only depends on $r,s$ and $N$.

Recall that $V_n$ is compact and belongs to $\S(N-2,s)\subset
\F(N-2,s)$ for any $n$.

If $\p V_n$ is a single closed curve (i.e. $V_n$ is a topological
disk), then there is just one outer loop in $V_n^*$ and, by Lemma
\ref{l:cociente}, $L_{S^*}(\p V_n) < L_{S}(\p V_n) \coth (r/2)\le s
\coth (r/2)$. Hence, $V_n^*\in\S(0,s \coth (r/2))=\F(0,s \coth
(r/2))$. In this case Theorem \ref{t:clases} suffices to ensure that
$V_n^*$ is $k_1$-hyperbolic, with a constant $k_1$ which only
depends on $r$ and $s$.

If $\p V_n$ is not connected, let us denote by $\g_n$ the simple
closed geodesic in $V_n^*$ which surrounds just $E_n$ (if $E_n$ is a
single point, as usual, we see $\g_n$ as a puncture and
$L_{S^*}(\g_n)=0$).

Note that any outer loop $\g$ distinct from $\g_n$ in $V_n^*$ is
freely homotopic to some closed curve in $\p V_n$.

Since Lemma \ref{l:cociente} implies $L_{S^*}(\p V_n)< L_{S}(\p V_n)
\coth (r/2)\le s \coth (r/2)$, we deduce that $V_n^*\in\F(N-1,s
\coth (r/2))$ (recall that $V_n$ has at most $N$ outer loops and
$E_n$ is simply connected for every $n$; we do not need to bound the
length of the outer loop corresponding to $E_n$). Theorem
\ref{t:clasef} guarantees that $V_n^*$ is $k_2$-hyperbolic, with a
constant $k_2$ which only depends on $r,s$ and $N$.

Now Theorem \ref{t:rsn} gives the result.
\end{proof}

We would like not to have to check the hypothesis
$D_{S^*}(\{V_n\}_n)< \infty$. The two following results allow to
remove this hypothesis if $S$ has either no genus or finite genus.

\medskip

If $S$ has no genus, then the set in which we take the supremum in
order to define $D_{S^*}$ is the empty set. Hence, we deduce the
following direct consequence.

\begin{coro}
\label{c:main} Let $S$ be an open non-exceptional Riemann surface
with no genus and $E=\cup_nE_n$ an $(r,s,N)$-uniformly separated
set in $S$, with $E_n$ simply connected for every $n$. Then,
$S^*:=S \setminus E$ is $\d^*$-hyperbolic if and only if $S$ is
$\d$-hyperbolic.

Furthermore, $\d^*$ (respectively $\d$) is a universal constant
which only depends on $r,s,N$ and $\d$ (respectively~$\d^*$).
\end{coro}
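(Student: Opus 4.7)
The plan is to apply Theorem~\ref{t:main} directly, after observing that the hypothesis on $D_{S^*}(\{V_n\}_n)$ is automatically satisfied when the ambient surface $S$ has no genus. Indeed, as is remarked in the paragraph immediately preceding the statement, the set over which the supremum defining $D_{S^*}(\{V_n\}_n)$ is taken becomes empty in the genus-zero case. So the entire content of the corollary is to justify this emptiness and then quote Theorem~\ref{t:main}.

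To see the emptiness rigorously, I would argue as follows. Fix $n$ and suppose, for contradiction, that there exist two distinct connected components $\eta^n_i,\eta^n_j$ of $\partial V_n$ lying in the same connected component $W$ of $S\setminus\inte V_n$. Since $V_n$ is a compact, bordered, connected surface of genus zero with a finite number of boundary components, $V_n$ is planar; moreover $W$ is a connected subsurface of $S$ whose boundary contains both $\eta^n_i$ and $\eta^n_j$. One can then construct a simple closed curve $\sigma$ obtained by concatenating an arc in $\inte V_n$ from a point of $\eta^n_i$ to a point of $\eta^n_j$ with an arc in $W$ from the same endpoint on $\eta^n_j$ back to the same endpoint on $\eta^n_i$ (such arcs exist by connectedness of $V_n$ and of~$W$). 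By construction $\sigma$ meets $\eta^n_i$ transversally in exactly one point, so $\sigma$ represents a non-trivial element of $H_1(S;\mathbb{Z}/2)$ whose mod-$2$ intersection with $\eta^n_i$ is~$1$. This forces $H_1(S)$ to be infinite, contradicting the assumption that $S$ has no genus (here we use that $S$ is an open non-exceptional Riemann surface with zero genus, so its first homology is generated by loops around punctures/funnels, none of which has non-trivial intersection with the embedded curve $\eta^n_i\subset S$).

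Hence for every $n$ no two distinct components of $\partial V_n$ can be joined inside $S\setminus\inte V_n$, so the defining set for $D_{S^*}(\{V_n\}_n)$ is empty and by the usual convention $D_{S^*}(\{V_n\}_n)=-\infty$ (in particular, finite). Now Theorem~\ref{t:main} applies verbatim: $S^*$ is $\delta^*$-hyperbolic if and only if $S$ is $\delta$-hyperbolic, with $\delta^*$ depending only on $r,s,N,\delta$ (the dependence on $D_{S^*}$ in Theorem~\ref{t:main} is absorbed since that quantity is now universally bounded, independent of the particular $S$), and reciprocally $\delta$ depending only on $\delta^*$.

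The only step that is not completely mechanical is the topological argument for emptiness of the indexing set, which is what I carried out above; the rest is a direct invocation of Theorem~\ref{t:main}. No further obstacle is expected.
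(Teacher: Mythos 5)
Your proposal takes essentially the same route as the paper: observe that for genus-zero $S$ the indexing set defining $D_{S^*}(\{V_n\}_n)$ is empty and then invoke Theorem~\ref{t:main}. The paper states the emptiness as a one-line remark before the corollary, so your topological justification is a welcome addition; however, one sentence in it is off. Genus zero does \emph{not} force $H_1(S)$ to be finite --- a disk minus infinitely many points has genus zero and infinite first homology --- so ``this forces $H_1(S)$ to be infinite, contradicting the assumption that $S$ has no genus'' is not a valid contradiction. The correct reason, which you in fact indicate in the parenthesis, is that on an orientable genus-zero surface the mod-$2$ intersection form on $H_1$ vanishes (equivalently, every simple closed curve in a planar surface separates, so any closed loop must cross it an even number of times); hence no closed curve $\sigma$ can meet the embedded simple closed curve $\eta^n_i$ transversally in exactly one point. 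I would restate the main sentence in those terms and delete the ``$H_1(S)$ infinite'' clause. One further small point: the paper's convention, stated at the start of Section~\ref{section5}, is that $D_{S^*}=0$ when the defining set is empty, not $-\infty$; but as you correctly observe, all that matters is that $D_{S^*}$ is universally bounded, so the constant from Theorem~\ref{t:main} depends only on $r,s,N,\d$.
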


\begin{teo}
\label{t:finitegenus} Let $S$ be an open non-exceptional Riemann
surface with finite genus and $E=\cup_nE_n$ an $(r,s,N)$-uniformly
separated set in $S$, with $E_n$ simply connected for every $n$.
Then, $S^*:=S \setminus E$ is hyperbolic if and only if $S$ is
hyperbolic.
\end{teo}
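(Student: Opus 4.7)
The reverse implication is immediate from Theorem~\ref{t:main}: whenever $S^*$ is hyperbolic, that theorem asserts $S$ is hyperbolic (and, as a byproduct, $D_{S^*}<\infty$). For the forward implication, assume $S$ is $\d$-hyperbolic of finite genus $g$; by Theorem~\ref{t:main} it suffices to verify $D_{S^*}(\{V_n\}_n)<\infty$ for the family $\{V_n\}_n$ furnished by the uniform separation hypothesis. Recall that $D_{S^*}$ is a supremum over the set $\mathcal{I}$ of triples $(n,i,j)$ for which the distinct components $\eta^n_i,\eta^n_j$ of $\p V_n$ lie in a common component of $S\setminus\inte V_n$.

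The plan is to establish (a) $\mathcal{I}$ is finite, and (b) each individual distance $d_{S^*}|_{V_n\setminus E_n}(\eta^n_i,\eta^n_j)$ entering the supremum is finite. For (a), each $(n,i,j)\in\mathcal{I}$ yields a loop $\ell_{n,i,j}\subset S$ obtained by concatenating an arc in the common exterior component of $S\setminus\inte V_n$ joining $\eta^n_i$ with $\eta^n_j$, with an arc in the connected set $V_n\setminus E_n$ joining $\eta^n_j$ back to $\eta^n_i$. Since $V_n$ has genus zero and $E_n$ is simply connected (so $V_n$ is a planar domain with $E_n$ sitting inside one of its disks), the loop $\ell_{n,i,j}$ is non-separating in $S$: it encircles a handle. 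Because the $V_n$ are pairwise disjoint, loops from different indices $n$ are homologically independent; within a fixed $V_n$, if $k$ components of $\p V_n$ share one exterior component of $S\setminus\inte V_n$, one gets $k-1$ independent loops of this type. All of these project non-trivially to the quotient of $H_1(S;\mathbb{Q})$ by the subspace spanned by separating loops and by small loops around ends, a vector space of dimension at most $2g$. Consequently $|\mathcal{I}|$ is bounded by a constant depending only on $g$ and $N$. For (b), since $V_n\setminus E_n$ is pathconnected, any curve in it joining $\eta^n_i$ to $\eta^n_j$ is a compact subset of the open Riemann surface $S^*$ and therefore has finite $d_{S^*}$-length, which provides a finite upper bound for the distance.

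Combining (a) and (b), $D_{S^*}$ is a maximum of finitely many finite numbers and so is finite; Theorem~\ref{t:main} then delivers $\d^*$-hyperbolicity of $S^*$. The hardest step is the homological counting in (a): while each loop $\ell_{n,i,j}$ is manifestly non-separating, verifying the stated linear-independence claim in the chosen quotient of $H_1(S;\mathbb{Q})$, so as to actually bound $|\mathcal{I}|$ in terms of $g$ and $N$, requires a careful Mayer--Vietoris style decomposition of $S$ relative to the disjoint family $\{V_n\}_n$ to rule out homological cancellations between loops based at different neighborhoods.
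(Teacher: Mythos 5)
Your reduction to finiteness of $D_{S^*}$ via Theorem~\ref{t:main} and your finiteness of each individual distance (part (b)) agree with the paper. But the homological argument in part (a) has a genuine gap that is not merely a missing proof step: the linear-independence claim you flag as ``the hardest step'' is in fact \emph{false}, and no Mayer--Vietoris computation will rescue it.

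Loops from different neighborhoods $V_n$ need not be homologically independent, even modulo separating curves and loops about ends. Take $S$ of genus one and let the $V_n$ be pairwise disjoint annuli whose cores are all freely homotopic to a fixed non-separating simple closed curve (a ``meridian'' of the single handle). For each such $V_n$ the two boundary circles $\eta^n_1,\eta^n_2$ lie in the same component of $S\setminus\inte V_n$ (cutting along a non-separating annulus does not disconnect $S$), so every such $n$ belongs to your index set $\mathcal I$. Yet all the resulting loops $\ell_{n,1,2}$ are homologous (each is a ``longitude''), so they collapse to a single class in your quotient of $H_1(S;\mathbb Q)$. Consequently the rank of that quotient gives no control on $\card\mathcal I$, and the asserted bound $\card\mathcal I\le C(g,N)$ cannot hold: with the picture above, by stretching the handle metrically one can fit as many $r$-separated such annuli as one pleases, for fixed genus and $N$. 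What is true is only that $\card\mathcal I$ is \emph{finite}, and the finiteness must come from a metric/compactness source, not a purely homological one.

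The paper's proof supplies exactly this metric ingredient. It chooses a compact subsurface $\overline G\subset S$ carrying all the genus, so that $S\setminus G$ is a finite disjoint union of genus-zero bordered surfaces $S_1,\dots,S_h$ with $\p S_j=g_j$. The separation condition $d_S(V_n,V_m)\ge r$ forces only finitely many $V_n$ to meet the compact set $\overline G$. For a $V_n$ disjoint from $\overline G$, hence contained in some genus-zero $S_{j_0}$, the paper shows directly that two components of $\p V_n$ cannot be joined in $S\setminus\inte V_n$: any such path, after replacing its excursions out of $S_{j_0}$ by arcs along $g_{j_0}$, would yield a path inside $S_{j_0}\setminus\inte V_n$, forcing $S_{j_0}$ to have genus, a contradiction. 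Thus the set $\Lambda$ of offending indices is contained in the finite set $\{n:V_n\cap\overline G\neq\varnothing\}$, and $D_{S^*}$ is a maximum of at most $\tfrac{N(N-1)}{2}\cdot\card\Lambda$ finite quantities. This is where the uniform separation hypothesis is actually used; your proposal never invokes $d_S(V_n,V_m)\ge r$, which is a telltale sign that the argument cannot close.
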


\begin{proof}
If $S$ has no genus, then Corollary \ref{c:main} gives the result.
Therefore, we can assume that $S$ has genus.

Given a choice of $\{V_n\}_n$, we define the subset of indices
$\Lambda$ as the set of all $n$ such that there are different
connected components $\eta^n_i,\eta^n_j$ of $\p V_n$ in the same
connected component of $S\setminus\inte V_n$. We are going to prove
that $\Lambda$ is finite. This will imply that $D_{S^*}(\{V_n\}_n)$
is the maximum of at most $\frac{N(N-1)}{2}\cdot\card\Lambda\,$
finite distances, hence finite, and then Theorem \ref{t:main} will
finish the proof.

Since $S$ has finite genus, there exists a domain $G\subset S$
verifying the following facts: $\overline{G}$ is a compact set whose
boundary is a finite collection $g_1,\dots ,g_h$ of simple closed
curves, and $S\setminus G$ is a disjoint union $S_1\cup\cdots\cup
S_h$ where each $S_j$ is a bordered surface with no genus, and $\p
S_j=S_j\cap\overline{G}=g_j$ for $j=1,\dots ,h$.

Only finitely many of the $V_n$ intersect $\overline{G}$; otherwise
the condition $d(V_n,V_m)\ge r$ could not be satisfied for all
$n\neq m$. We next prove that if $V_n\cap\overline{G}=\varnothing$
then $n\notin\Lambda$, and finiteness of $\Lambda$ follows.

Let us suppose $V_n$ is disjoint from $\overline{G}$, that two
connected components $\eta_{ni},\eta_{ni'}$ of $\p V_n$ can be
connected in $S\setminus \inte V_n$, and derive a contradiction.
Since $V_n$ is connected, it is contained in one of the $S_j$, say
$S_{j_0}$. The path $\gamma$ connecting $\eta_{ni}$ to $\eta_{ni'}$
in $S\setminus \inte V_n$ cannot be all inside $S_{j_0}$, for that
would force $S_{j_0}$ to have genus. Therefore $\gamma$ exits
$S_{j_0}$, which it can do only by crossing $g_{j_0}$. Any time
$\gamma$ re-enters $S_{j_0}$, it must do so by crossing $g_{j_0}$.
One concludes that the parts of $\gamma$ outside $S_{j_0}$ can be
replaced with arcs of $g_{j_0}$, but this yields a continuous path
in $S_{j_0}\setminus\inte V_n$ which joins $\eta_{ni}$ to
$\eta_{ni'}$, once again forcing $S_{j_0}$ to have genus.

We conclude that $\Lambda\subseteq\{\, n\; :\; V_n\cap
G\neq\varnothing\,\}$; hence $\Lambda$ and $D_{S^*}(\{V_n\}_n)$ are
finite, as was to be proved
\end{proof}

Finally we shall prove Theorem~\ref{t:infinite}, a complementary
result to Corollary \ref{c:main} with very different hypotheses. It
is also the $N=\infty$ analogue of Theorem \ref{t:finite}. Removing
an infinity of sets $E_n$ from the initial surface $S$ can ruin
hyperbolicity if the $E_n$ become too small or too large as
$n\to\infty$. One idea is to reduce to the case $S={\mathbb D}$ and
then use annuli, instead of curves, to `surround' the sets $E_n$.
More concretely, the condition that the domain ${\mathbb D}\setminus
E$ must satisfy is having \emph{uniformly perfect boundary}, which
we define below. We also quote some results, about the Poincar\'e
and quasihyperbolic metrics of a domain, that are used in the final
proof.

\begin{definicion}
A \emph{generalized annulus} $\O$ is a doubly connected open subset
of the complex plane which is not the plane minus a point; then its
complement (in the Riemann sphere) has two connected components.

Given any generalized annulus $\O$, there exists a conformal mapping
of $\O$ onto $\{z\in\CC: \, 1< |z-a|<R \}$, for some $1<R \le
\infty$. We define the \emph{modulus} of $\O$ as
$$
\modulus \O := \frac1{2\pi} \log R \,.
$$
We say that a generalized annulus $\O$ \emph{separates} a closed set
$E$ if $\O$ does not intersect $E$ and each connected component of
the complement of $\O$ intersects $E$.

We say that $E$ is \emph{uniformly perfect} if there exists a
constant $c_1$ such that $\modulus \O \le c_1$ for every generalized
annulus separating $E$ (see \cite{BP}).
\end{definicion}

Two useful properties of the modulus are the following:

(A) If $\g$ is the simple closed geodesic for the Poincar\'e metric
in $\O$, then $\modulus \O =\pi/L_\O(\g)$ (if $\O$ has a puncture we
can see $\g$ as the puncture and then $L_{\O}(\g)=0$ and $\modulus
\O = \infty$).

(B) If $\O_1 \subseteq \O_2$, then $\modulus \O_1 \le \modulus
\O_2$.

\spb

A domain with one or more punctures is never uniformly perfect. If
we remove from the unit disk $\mathbb D$ a sequence of straight
segments whose lengths converge to zero, then the resulting domain
is not uniformly perfect. This example leads to the hypothesis
$\diam_SE_n\ge c$ in the statement of Theorem~\ref{t:infinite}.

\spb

Uniformly perfect sets verify the following interesting property:

\begin{teo}
\emph{(\cite[Corollary 1]{BP})} \label{BPthm2} Let $\O\subset\CC$ be
a non-exceptional domain and $ds=\lambda_\O (z)\, |dz|$ its
Poincar\'e metric. Define also $\d_\O (z):=\min\{|z-a|: \, a \in \p
\O \}$. The following conditions are equivalent:
\begin{enumerate}
\item There exists a positive constant $c_2$ with $\,\displaystyle
\frac{c_2}{\d_\O (z)} \le \l_\O (z) \le \frac2{\d_\O (z)}\, $ for
every $z\in\O$ \item $\p\O$ is uniformly perfect.
\end{enumerate}

\noindent Furthermore, if $\p\O$ is uniformly perfect then the
constant $c_2$ just depends on the uniformly perfect constant of
$\p\O$.
\end{teo}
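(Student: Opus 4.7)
The plan is to separate the trivial upper bound from the non-trivial lower bound, then establish the two implications for the lower bound using domain monotonicity of the Poincaré metric together with the explicit densities of the round annulus and the twice-punctured plane.

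The upper estimate $\l_\O(z)\le 2/\d_\O(z)$ is elementary and requires no hypothesis on $\p\O$: applying the Schwarz--Pick lemma to the conformal inclusion $B(z,\d_\O(z))\hookrightarrow\O$ of the largest Euclidean disk centered at $z$ lying in $\O$, whose own Poincaré density at the centre equals $2/\d_\O(z)$, monotonicity of the Poincaré metric yields the bound. Hence the content of the theorem reduces to showing that the matching lower estimate $\l_\O(z)\ge c_2/\d_\O(z)$ is equivalent to uniform perfectness of $\p\O$.

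For the direction (2)$\Rightarrow$(1), fix $z_0\in\O$, set $r_0=\d_\O(z_0)$, and choose $a_0\in\p\O$ realizing the minimum, so $|z_0-a_0|=r_0$. Let $R_0$ be the supremum of $R>r_0$ for which $\{w:r_0<|w-a_0|<R\}\subset\O$; closedness of $\p\O$ and maximality of $R_0$ produce $b_0\in\p\O$ with $|b_0-a_0|=R_0$, so $A_0=\{r_0<|w-a_0|<R_0\}$ is a generalized annulus separating $\p\O$. Uniform perfectness gives $\modulus A_0\le c_1$, i.e.\ $R_0\le Kr_0$ with $K=e^{2\pi c_1}$; combined with $|z_0-b_0|\ge\d_\O(z_0)=r_0$ this yields $|z_0-a_0|,|z_0-b_0|\in[r_0,(K+1)r_0]$. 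Since $\O\subset\CC\setminus\{a_0,b_0\}$, domain monotonicity gives $\l_\O(z_0)\ge\l_{\CC\setminus\{a_0,b_0\}}(z_0)$. The affine isomorphism $\zeta=(w-a_0)/(b_0-a_0)$ maps $\CC\setminus\{a_0,b_0\}$ conformally onto $\CC\setminus\{0,1\}$ and sends $z_0$ to a point $\zeta_0$ with $|\zeta_0|\in[1/K,1]$ and $|1-\zeta_0|\in[1/K,K+1]$, a compact subset of $\CC\setminus\{0,1\}$ depending only on $c_1$. Positivity and continuity of $\l_{\CC\setminus\{0,1\}}$ bound it below on this compactum by some $m(c_1)>0$, and tracking the conformal factor $1/|b_0-a_0|\ge 1/((K+1)r_0)$ yields $\l_\O(z_0)\ge c_2/r_0$.

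For the converse (1)$\Rightarrow$(2), let $A=\{w:r_1<|w-a|<r_2\}\subset\O$ be any generalized annulus separating $\p\O$, and pick any $z_0$ on its core circle $|z_0-a|=\sqrt{r_1r_2}$. Domain monotonicity yields $\l_\O(z_0)\le\l_A(z_0)$, and passing to the universal cover of $A$ via $w\mapsto\log(w-a)$ onto the strip of width $\log(r_2/r_1)$, a direct computation of the strip's density at its central line evaluates the right-hand side as $\l_A(z_0)=\pi/(\sqrt{r_1r_2}\,\log(r_2/r_1))$. Since $A$ separates $\p\O$, the inner complementary disk $\overline{B(a,r_1)}$ contains a point of $\p\O$ within distance $\sqrt{r_1r_2}+r_1\le 2\sqrt{r_1r_2}$ of $z_0$, so $\d_\O(z_0)\le 2\sqrt{r_1r_2}$. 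Combining with the hypothesis $c_2/\d_\O(z_0)\le\l_\O(z_0)$ yields $c_2/(2\sqrt{r_1r_2})\le\pi/(\sqrt{r_1r_2}\log(r_2/r_1))$, whence $\log(r_2/r_1)\le 2\pi/c_2$ and $\modulus A\le 1/c_2$, proving uniform perfectness with $c_1=1/c_2$.

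The main obstacle I anticipate is the borderline case $R_0=\infty$ in the (2)$\Rightarrow$(1) argument, which arises exactly when $\O$ is unbounded with bounded Euclidean boundary; here no point $b_0\in\p\O$ is produced by the maximality argument. The fix is to interpret uniform perfectness in the Riemann-sphere sense (so that $\infty$ belongs to $\p\O$ whenever $\O$ is unbounded) and to apply a preliminary Möbius transformation sending some point of $\p\O\cup\{\infty\}$ to infinity, thereby reducing to a bounded configuration where $R_0<\infty$ automatically. Since uniform perfectness of a subset of $\hat\CC$ is Möbius-invariant, the hypothesis is preserved by this reduction, and a secondary bookkeeping step ensures that the constants $K$, $m(c_1)$, and $c_2$ depend only on the uniform perfectness constant $c_1$.
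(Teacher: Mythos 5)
First, note that the paper offers no proof of this statement: it is quoted directly from Beardon--Pommerenke \cite[Corollary 1]{BP}, so there is nothing internal to compare against. Your argument is essentially the standard proof of that result (Schwarz--Pick for the upper bound; monotonicity against $\CC\setminus\{a_0,b_0\}$ plus compactness in $\CC\setminus\{0,1\}$ for (2)$\Rightarrow$(1); monotonicity against a round annulus, whose density on the core circle you compute correctly, for (1)$\Rightarrow$(2)). The skeleton is sound, but two gaps remain. In (1)$\Rightarrow$(2) you only treat \emph{round} annuli, whereas the paper's definition of uniform perfectness quantifies over all generalized annuli, i.e.\ arbitrary doubly connected open sets. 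You must either invoke the standard fact that an annulus of modulus $m$ contains an essential round annulus of modulus at least $m-C$ for a universal $C$, or argue directly: if $A\subset\O$ separates $\p\O$ and $\g$ is its core geodesic, then $L_\O(\g)\le L_A(\g)=\pi/\modulus A$ by monotonicity and property (A), while $\g$ has winding number $\pm1$ about some $a\in\p\O$ in the bounded complementary component, so $\int_\g |dz|/\d_\O(z)\ge\int_\g |dz|/|z-a|\ge 2\pi$ and hypothesis (1) gives $L_\O(\g)\ge 2\pi c_2$, whence $\modulus A\le 1/(2c_2)$ with no reduction to round annuli.

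Second, the case you flag in (2)$\Rightarrow$(1) is more than a borderline technicality. With the paper's literal conventions ($\p\O\subset\CC$, so the complementary component $\{\infty\}$ of an unbounded annulus never meets $\p\O$), the implication is actually \emph{false} for unbounded domains: $\O=\CC\setminus[0,1]$ has connected boundary, hence admits no separating generalized annulus at all and is vacuously ``uniformly perfect,'' yet $\l_\O(z)\,\d_\O(z)\asymp 1/\log|z|\to 0$. Your proposed remedy (adjoin $\infty$ to $\p\O$ and use M\"obius invariance) is precisely what makes the statement true, but it proves a corrected statement rather than the literal one; this should be said explicitly rather than presented as bookkeeping. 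A smaller omission in the same direction: the set over which you take the supremum defining $R_0$ may be empty (boundary points can accumulate on $|w-a_0|=r_0$ from outside); in that case a connectedness argument yields $b_0\in\p\O$ with $r_0<|b_0-a_0|<2r_0$ and the two-point comparison goes through with better constants. None of this affects the paper, which applies the theorem only to subdomains of $\DD$, where $R_0$ is automatically finite and the literal statement is correct.
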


If we define, as usual, the \emph{quasihyperbolic length} of a curve
$\gamma$ as
$$
k_\O (\g) := \int_\g \frac{|dz|}{\d_\O (z)} \;,
$$
then Theorem \ref{BPthm2} says that $\p\O$ is uniformly perfect if
and only if $L_\O (\g) \ge c_2 k_\O (\g)$ for every curve $\g
\subset \O$.

\spb

We need one more technical result:

\begin{lema}
\label{minLenLem} \emph{(\cite[Lemma 3.3]{HLPRT})} Let $\gamma$ be a
curve in a domain $\O\subset \RR^n$ starting at a point $x$ and with
Euclidean length~$s$. Then
$$
k_\O(\gamma) \ge \log \Big( 1 + \frac{s}{\d_\O(x)} \Big)\,.
$$
\end{lema}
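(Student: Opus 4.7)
The plan is to parametrize $\gamma$ by Euclidean arc length, obtain a pointwise upper bound on $\delta_\Omega$ along $\gamma$ via the triangle inequality, and then integrate. Write $\gamma:[0,s]\to\Omega$ with $\gamma(0)=x$ and $|\gamma'(t)|=1$ a.e., so that the Euclidean distance satisfies $|\gamma(t)-x|\le t$.

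The key geometric step is a one-line application of the triangle inequality for the function $\delta_\Omega$: for any $y\in\Omega$, one has $\delta_\Omega(y)\le \delta_\Omega(x)+|y-x|$, because any point $b\in\partial\Omega$ satisfies $|y-b|\le|y-x|+|x-b|$ and taking the infimum over $b$ on both sides gives the claim. Applied at $y=\gamma(t)$ this yields the monotone comparison
\begin{equation*}
\delta_\Omega\bigl(\gamma(t)\bigr)\;\le\;\delta_\Omega(x)+t\qquad\text{for every }t\in[0,s].
\end{equation*}

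Plugging this into the definition of quasihyperbolic length gives
\begin{equation*}
k_\Omega(\gamma)\;=\;\int_0^s\frac{dt}{\delta_\Omega(\gamma(t))}\;\ge\;\int_0^s\frac{dt}{\delta_\Omega(x)+t}\;=\;\log\!\left(\frac{\delta_\Omega(x)+s}{\delta_\Omega(x)}\right)\;=\;\log\!\left(1+\frac{s}{\delta_\Omega(x)}\right),
\end{equation*}
which is the desired inequality. There is no real obstacle in this argument; the only point worth emphasizing is that the bound $\delta_\Omega(\gamma(t))\le\delta_\Omega(x)+t$ depends only on the Euclidean arc length traveled from $x$, not on the geometry of $\Omega$ or on where $\gamma$ goes, so the estimate holds uniformly for \emph{any} curve starting at $x$ with Euclidean length $s$.
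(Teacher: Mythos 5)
Your proof is correct. The paper does not supply its own argument for this lemma — it simply cites it as \cite[Lemma 3.3]{HLPRT} — so there is nothing in the text to compare against, but the argument you give (arc-length parametrization, the $1$-Lipschitz bound $\delta_\Omega(\gamma(t))\le\delta_\Omega(x)+t$ from the triangle inequality, then integration of $1/(\delta_\Omega(x)+t)$) is the standard and essentially unique proof of this classical estimate.
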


\begin{teo}
\label{t:infinite} Let us consider an open non-exceptional Riemann
surface $S$ with no genus, and pairwise disjoint simply connected
compact sets $\{E_n\}_{n=1}^\infty$ in $S$ with $\diam_S E_n \ge c$
for every $n$. We define $S^*:=S\setminus \cup_{n=1}^\infty E_n$.
Assume that for each $n$ there exists a simple closed curve $g_n$
surrounding just $E_n$ with $L_{S^*}(g_n) \le l$. Then $S$ is
$\d$-hyperbolic if and only if $S^*$ is $\d^*$-hyperbolic.

Furthermore, $\d^*$ (respectively $\d$) is a universal constant
which only depends on $c$, $l$ and $\d$ (respectively $\d^*$).
\end{teo}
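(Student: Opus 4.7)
The plan is to deduce both directions of the equivalence from Corollary~\ref{c:main} by showing that $E=\cup_n E_n$ is an $(r,s,N)$-uniformly separated set in $S$, with $r,s,N$ depending only on $c$ and $l$. Throughout I would realize $S$ as a plane domain, which is possible since $S$ has no genus.

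As a first step, since $L_S(g_n)\le L_{S^*}(g_n)\le l$ and $g_n$ bounds a simply connected disk $D_n\supset E_n$ in $S$, the universal-cover argument used in the proof of Theorem~\ref{t:finite} yields $\diam_S E_n\le l/2$; combined with the hypothesis $\diam_S E_n\ge c$, this gives $c\le \diam_S E_n\le l/2$. Choosing $p_n\in E_n$ and applying Theorem~\ref{t:balls} with $r_0=l/2$ and $c/k=1$, I find $V_n:=\overline{B_S(p_n,l_n')}$ with $l_n'\in(l/2,l/2+1)$ whose boundary is a disjoint union of at most $N_0=1+2e^{l/2}$ simple closed curves of total $S$-length at most $2\pi\sinh(l/2+1)$, and with $d_S(\p V_n,E_n)\ge l/2$. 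These are the candidate $N$-normal neighborhoods.

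The crucial step is establishing pairwise $d_S$-separation of the $V_n$. For this I would first show that $\p S^*=\p S\cup\bigcup_n E_n$ is uniformly perfect. Given a generalized annulus $A\subset S^*$ separating $\p S^*$, its core curve $\alpha$ is essential in $S^*$, but, since each $E_n$ is simply connected, $\alpha$ becomes null-homotopic in $S$ after filling the $E_n$'s it surrounds; hence $\alpha$ bounds a disk in $S$ containing at least one $E_n$. The universal-cover argument then gives $L_S(\alpha)\ge 2\diam_S E_n\ge 2c$, so $L_A(\alpha)\ge L_{S^*}(\alpha)\ge L_S(\alpha)\ge 2c$, whence $\modulus A\le\pi/(2c)$. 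Theorem~\ref{BPthm2} then provides $c_2=c_2(c)>0$ with $\lambda_{S^*}(z)\ge c_2/\d_{S^*}(z)$, so $k_{S^*}(g_n)\le l/c_2$; Lemma~\ref{minLenLem} now controls the Euclidean extent of $g_n$ in terms of $\d_{S^*}$. Combining this with the lower bound $\diam_S E_n\ge c$, the upper bound $\lambda_{S^*}\le 2/\d_{S^*}$, and Lemma~\ref{l:cociente}, I would conclude a uniform positive lower bound for $d_S(E_n,E_m)$, and hence for $d_S(V_n,V_m)$, when $n\neq m$.

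Once $(r,s,N)$-uniform separation is established with $r,s,N$ depending only on $c,l$, Corollary~\ref{c:main} immediately yields the equivalence with the claimed dependency of the hyperbolicity constants. The main obstacle I anticipate is the quantitative conversion from uniform perfectness into $d_S$-separation, which demands careful bookkeeping across three metrics: the Poincar\'e metrics of $S$ and of $S^*$, and the Euclidean metric of the plane-domain realization. A secondary technical point is verifying that every annulus separating $\p S^*$ actually surrounds some $E_n$ rather than only pieces of $\p S$; this should reduce to a topological argument in the plane-domain realization, possibly supplemented by a preliminary treatment of the already-uniformly-perfect components of $\p S$ using hyperbolicity of $S$ in the forward direction.
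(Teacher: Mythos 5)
Your proposal correctly identifies the overall strategy — reduce to Corollary~\ref{c:main} by exhibiting a uniform separation — but it departs from the paper's proof at two places where the departure creates genuine gaps.

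First, the claim that $\p S^*$ is uniformly perfect in a planar realization of $S$ is not provable from the hypotheses. A planar realization of $S$ has $\p S$ as part of its boundary, and $\p S$ need not be uniformly perfect: a single puncture in $S$, or a sequence of boundary components accumulating badly, already destroys uniform perfectness without affecting the hypotheses on $\{E_n\}$. In those cases one can find generalized annuli in $S^*$ of arbitrarily large modulus that separate only pieces of $\p S$ and surround no $E_n$, so your topological argument ``$\alpha$ bounds a disk containing some $E_n$'' simply fails. You flag this as a ``secondary technical point'' and propose to fix it using hyperbolicity of $S$, but that does not help: hyperbolicity of $S$ does not imply uniform perfectness of $\p S$, and in the reverse direction of the equivalence $S$ is not even assumed hyperbolic. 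The paper sidesteps the entire issue by lifting to the universal cover: working in $\DD$ with the lifted sets $F_n$, the only contributions to $\p\DD^*$ are the unit circle (uniformly perfect) and the $F_n$'s with $\diam_\DD F_n\ge c$, so every separating annulus is trapped inside some $\DD\setminus F_{n_0}$ of modulus at most $\pi/c$. This reduction is the crux of the proof, and your argument does not contain it.

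Second, your choice of $V_n$ as metric balls from Theorem~\ref{t:balls} is both slightly miscalibrated and, more importantly, creates a separation problem you do not resolve. Taking $r_0=l/2$ gives $l_n'\in(l/2,l/2+1)$ and hence no uniform lower bound on $d_S(\p V_n,E_n)$ (you need $r_0=l$, as in the proof of Theorem~\ref{t:finite}, so that $l_n'\ge l$ while $E_n\subset\overline{B_S(p_n,l/2)}$). But even after fixing the radius, the balls are large (radius roughly $l+1$) and pairwise $d_S$-separation of such balls is a much stronger statement than what uniform perfectness gives you; you never actually derive it, writing only that you ``would conclude a uniform positive lower bound for $d_S(E_n,E_m)$.'' The paper instead takes $V_n$ to be the disk bounded by the geodesic $\g_n$ freely homotopic to $g_n$; this is a $1$-normal neighborhood, its boundary length is controlled directly by $L_{S^*}(g_n)\le l$, and the pairwise separation comes for free from the Collar Lemma applied to the short geodesics $\g_n$ in $S^*$ (disjoint collars of width $\Arccosh\coth(l/2)$ by Remark~\ref{disjuntas}), followed by Lemma~\ref{l:cociente} to convert $d_{S^*}$-separation into $d_S$-separation. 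Your proposal omits the Collar Lemma entirely, which is the mechanism that makes the separation step work. What remains correct in your sketch is the isolation, via uniform perfectness and Lemma~\ref{minLenLem}, of a lower bound $d_S(E_n,\g_n)\ge\e_0(c,l)$ — this is indeed the paper's ``Claim'' — but by itself it does not yield uniform separation without the Collar Lemma step.
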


\begin{obs}
The conclusion of Theorem \ref{t:infinite} does not hold if we
remove either the hypothesis $\diam_S E_n \ge c$ or $L_{S^*}(g_n)
\le l$.
\end{obs}

\begin{proof}
For each $n$ there exists a simple closed geodesic $\g_n$
surrounding just $E_n$ (freely homotopic to $g_n$) with
$L_{S^*}(\g_n) \le L_{S^*}(g_n) \le l$ ($\g_n$ can not be a puncture
since $\diam_S E_n \ge c$ implies that $E_n$ is not an isolated
point).

\vspace{3mm}

\noindent{\bf Claim:}  there exists a positive constant $\e_0$,
which just depends on $c$ and $l$, such that $d_S(E_n,\g_n) \ge
\e_0$ for every $n$.

\vspace{3mm}

We prove the Theorem assuming this claim. Let $V_n$ be the closure
of the simply connected open subset of $S$ surrounded by $\g_n$.
Then $V_n$ is a $1$-normal neighborhood of $E_n$. Furthermore,
$$
\begin{aligned}
d_S(E_n,\p V_n) & = d_S(E_n,\g_n) \ge \e_0 \,,
\\
L_S(\p V_n)=L_S(\g_n) & < L_{S^*}(\g_n) \le L_{S^*}(g_n) \le l \,.
\end{aligned}
$$
Given $n\neq m$, we have $L_{S^*}(\g_n), L_{S^*}(\g_m) \le l$, and
by the Collar Lemma (see \cite{R}) there are collars in $S^*$ around
$\g_n$ and $\g_m$ of width $\Arccosh \coth (l/2)$. These collars are
pairwise disjoint, as explained in Remark~\ref{disjuntas}. Then we
deduce that $d_{S^*}(\g_n,\g_m) \ge 2 \Arccosh \coth (l/2)$. Now
Lemma \ref{l:cociente} gives:
$$
d_{S}(V_n,V_m) = d_{S}(\g_n,\g_m) > \tanh(\e_0/2) d_{S^*}(\g_n,\g_m)
\ge 2 \tanh(\e_0/2) \Arccosh \coth (l/2)\,.
$$
If we define $r := \min\{\e_0, 2 \tanh(\e_0/2) \Arccosh \coth
(l/2)\}$, then $\{E_n\}_n$ is a $(r,l,1)$-uniformly separated set
in~$S$.

Corollary \ref{c:main} states that $S$ is $\d$-hyperbolic if and
only if $S^*$ is $\d^*$-hyperbolic, with the appropriate behaviour
of the constants. This finishes the proof if the claim holds.

\vspace{3mm}

\noindent Now we are going to prove the claim.

We prove first that without loss of generality we can assume
$S=\DD$: Let $\pi:\DD\longrightarrow S$ be a universal covering map.
We consider $F:=\pi^{-1}(E)$ and the connected components $\{F_n\}$
of $F$. If $\DD^*:=\DD\setminus F$, then $\pi:\DD^*\longrightarrow
S^*$ is also a covering map. Consequently, $\pi$ defines two local
isometries: $\DD\to S$ and $\DD^*\to S^*$. Given any fixed $F_n$
then $E_m:=\pi(F_n)$ verifies that $\pi:F_n\to E_m$ is a bijection.
Hence, $\diam_\DD(F_n) \ge \diam_S(E_m) \ge c$.

Let $W_n$ be the connected component of $\pi^{-1}(V_m)$ containing
$F_n$. Then $\g'_n=\partial W_n$ is the simple closed geodesic in $\DD^*$
surrounding just $F_n$ and $L_{\DD^*}(\g'_n) = L_{S^*}(\g_m) \le l$.
Consequently, $\{F_n\}_{n=1}^\infty$ verifies the hypotheses in
Theorem \ref{t:infinite}. Since $\pi$ defines bijections $W_n\to
V_m$ and $\g'_n\to\g_m$, we have $d_\DD(F_n,\g_n') =d_S(E_m,\g_m)$.
In order to prove the claim we can thus assume without loss of
generality that $S=\DD$, and $\DD^*=\DD\setminus \cup_{n=1}^\infty
E_n$.

\spb

We prove now that $\p \DD^*$ is a uniformly perfect set: Let us
consider a generalized annulus $A$ separating $\p \DD^*$; then
$A\subset \DD^*\subset \DD$ and the bounded connected component of
the complement of $A$ contains some $E_{n_0}$. Hence, $A\subset \DD
\setminus E_{n_0}$ and consequently $\modulus A\le \modulus (\DD
\setminus E_{n_0})$.

We are looking for a lower bound of $L_{\DD \setminus
E_{n_0}}(\eta_{n_0})$, where $\eta_{n_0}$ is the simple closed
geodesic for the Poincar\'e metric in $\DD \setminus E_{n_0}$.
Consider $a,b \in E_{n_0}$ with $d_\DD(a,b)=\diam_\DD E_{n_0}$, and
the simple closed geodesic $\eta$ for the Poincar\'e metric in $\DD
\setminus \{a,b\}$. We have that
$$
L_{\DD \setminus E_{n_0}}(\eta_{n_0})
> L_{\DD \setminus \{a,b\}}(\eta_{n_0})
\ge L_{\DD \setminus \{a,b\}}(\eta) \,.
$$
Since $\DD \setminus \{a,b\}$ and $\DD \setminus \{a',b'\}$ are
isometric if and only if $d_\DD(a,b)=d_\DD(a',b')$, then $L_{\DD
\setminus \{a,b\}}(\eta)=f(d_\DD(a,b))$, for some function $f:
(0,\infty) \longrightarrow (0,\infty)$. Since $\eta$ surrounds
$\{a,b\}$,
$$
f(d_\DD(a,b)) = L_{\DD \setminus \{a,b\}}(\eta)
> L_{\DD}(\eta) > d_\DD(a,b) \,;
$$
consequently, $f(t)>t$ and since $d_{\DD} (a,b)=\diam_\DD E_n \ge
c$,
$$
L_{\DD \setminus E_{n_0}}(\eta_{n_0}) > L_{\DD \setminus
\{a,b\}}(\eta) = f(d_{\DD} (a,b)) > d_{\DD} (a,b) \ge c \,,
$$
and hence $\modulus A \le \modulus (\DD \setminus E_{n_0}) \le
\pi/c$. This shows that $\p \DD^*$ is a uniformly perfect set.

\medskip

Now, by Theorem \ref{BPthm2}, there exists a constant $c_1$, which
just depends on $c$, such that $L_{\DD^*} (\g) \ge c_1 k_{\DD^*}
(\g)$ for every curve $\g \subset {\DD^*}$.

Let us consider a fixed $n$ and the simple closed geodesic $\g_n$ in
$\DD^*$ surrounding just $E_n$ (freely homotopic to $g_n$) with
$L_{S^*}(\g_n) \le L_{S^*}(g_n) \le l$. Take $p \in E_{n}$ and $q
\in \g_{n}$ with $\e:=d_\DD(E_n,\g_n)=d_\DD(p,q)$. Since
$d_\DD(0,a)=2\Arctanh a$, using a M\"{o}bius map if it is necessary,
we can assume without loss of generality that $p=0$ and $q=\tanh
(\e/2)$. Since $L_{\DD}(\g_n)> \diam_\DD E_{n} \ge c$, we can
consider a subcurve $\g_n^0 \subset \g_n$ starting at $q$ and with
$L_{\DD}(\g_n^0) = c$; then,
$$
\g_n^0 \subset \overline{B_\DD(0,\e+c)} = \overline{B_{Eucl}\big(\,
0\, ,\,\tanh ((\e+c)/2)\,\big)} \,.
$$
Then
$$
L_{Eucl}(\g_n^0) = \int_{\g_n^0} |dz| \ge \frac{1-\tanh\!^2
((\e+c)/2)}2 \int_{\g_n^0} \frac{2\,|dz|}{1-|z|^2} =
\frac{L_{\DD}(\g_n^0)}{2 \cosh\!^2((\e+c)/2)} = \frac{c}{2 \cosh\!^2
((\e+c)/2)} \;.
$$
Therefore, applying Lemma \ref{minLenLem}, we obtain
$$
\begin{aligned}
l & \ge L_{\DD^*}(g_n) \ge L_{\DD^*}(\g_n) \ge L_{\DD^*}(\g_n^0) \ge
c_1 k_{\DD^*}(\g_n^0)
\\
& \ge c_1 \log \Big( 1 + \frac{L_{Eucl}(\g_n^0)}{\d_{\DD^*}(q)}
\Big) \ge c_1 \log \Big( 1 + \frac{L_{Eucl}(\g_n^0)}{q} \Big)
\\
& \ge c_1 \log \Big( 1 + \frac{c}{2 \tanh (\e/2) \cosh\!^2
((\e+c)/2)} \Big)\,.
\end{aligned}
$$
Hence,
$$
2 \tanh (\e/2) \cosh\!^2 ((\e+c)/2) \ge \frac{c}{e^{l/c_1}-1} \;.
$$
Note that, for each fixed $c$, the function $f_c: (0,\infty)
\longrightarrow (0,\infty)$ given by $f_c(\e)=2 \tanh (\e/2)
\cosh\!^2 ((\e+c)/2)$ is positive and increasing in $\e \in
(0,\infty)$. If we define
$$
\e_0:= f_c^{-1} \Big( \frac{c}{e^{l/c_1}-1} \Big) >0 \,,
$$
then $\e_0$ just depends on $c$ and $l$, and $d_\DD(E_n,\g_n)=\e \ge
\e_0$ for every $n$. This finishes the proof of both the claim and
the Theorem.
\end{proof}

\

\

\begin{tabular}{cccc}
\small
\parbox{7cm}{
Ana Portilla, Jos\'e M. Rodr\'{\i}guez, Eva Tour\'{\i}s\\
Departamento de Matem\'aticas\\
Escuela Polit\'ecnica Superior\\
Universidad Carlos III de Madrid \\
Avenida de la Universidad, 30\\
28911 Legan\'es (Madrid), SPAIN\\} & &\hspace{2cm} & \small
\parbox{6cm}{
\quad\\
Jes\'us Gonzalo\\
Departamento de Matem\'aticas\\
Facultad de Ciencias\\
Universidad Aut\'onoma de Madrid\\
28049 Madrid, SPAIN\\
$\qquad$\\}
\end{tabular}

\enddocument